\numberwithin{equation}{section}
\newtheorem{theorem}{Theorem}[section]
\newtheorem{corollary}[theorem]{Corollary}
\newtheorem{lemma}[theorem]{Lemma}
\theoremstyle{definition}
\newtheorem{definition}{Definition}[section]
\newtheorem{remark}{Remark}[section]
\newtheorem{assumption}{Assumption}[section]
\newcommand\R{\mathbb R}
\newcommand\divg{\mathop{\mbox{\rm div}}}
\newcommand{\lagr}[1]{\boldsymbol{\mathbf{#1}}}
\newcommand\adj{\mathop{\mbox{\rm adj}}}
\title[Parabolic Lam\'{e} system with rough coefficients]{Gaussian bounds of fundamental matrix and maximal $L^1$ regularity for Lam\'{e} system with rough coefficients}
\author{Huan XU}
\subjclass{35B65; 35K08; 47D06.}
\keywords{Heat kernel, Maximal regularity, Abstract Cauchy problem, Lam\'{e} system, Pressureless flow.}
\begin{document}

\maketitle

\begin{abstract}
The purpose of this paper is twofold. First, we use a classical method to establish Gaussian bounds of the fundamental matrix of a generalized parabolic Lam\'{e} system with only bounded and measurable coefficients. Second, we derive a maximal $L^1$ regularity result for the abstract Cauchy problem associated with a composite operator. In a concrete example, we also obtain maximal $L^1$ regularity for the Lam\'{e} system, from which it follows that the Lipschitz seminorm of the solutions to the Lam\'{e} system is globally $L^1$-in-time integrable. As an application, we use a Lagrangian approach to prove a global-in-time well-posedness result for a viscous pressureless flow provided that the initial velocity satisfies a scaling-invariant smallness condition. The method established in this paper might be a powerful tool for studying many issues arising from viscous fluids with truly variable densities.
\end{abstract}

\bigskip


\bigskip

\section{Introduction}

\bigskip

This work is motivated by the study of the global well-posedness of the Cauchy problem for a class of hyperbolic-parabolic coupled systems modeling the motion of fluids. Probably the most famous example is the system of Navier-Stokes equations (see \cite{Lions book I,Lions book II}). In a fluid flow, the law of conservation of mass can be formulated mathematically using the continuity equation, given in differential form as
\begin{align}\label{continuity equation}
\partial_t\rho+\divg(\rho u)=0,\ \ \mathrm{in}\ (0,\infty)\times\R^n,
\end{align}
where $\rho$ is the density (mass per unit volume) and $u$ is the flow velocity field. The law of conservation applied to momentum gives the momentum equation of the form
\begin{align}\label{momentum equation with pressure}
\rho(\partial_tu+u\cdot\nabla u)-\mathcal{A}u+\nabla P=0,\ \ \mathrm{in}\ (0,\infty)\times\R^n,
\end{align}
where $P$ is a scalar pressure and $\mathcal{A}$ is a dissipative operator. The fluid flow can be either incompressible or compressible. However, in the compressible case, we will only consider pressureless flows, which means we drop the pressure term in \eqref{momentum equation with pressure} and write
\begin{align}\label{pressureless momentum equation}
\rho(\partial_tu+u\cdot\nabla u)-\mathcal{A}u=0,\ \ \mathrm{in}\ (0,\infty)\times\R^n.
\end{align}
Note that \eqref{continuity equation}$\&$\eqref{pressureless momentum equation} with $\mathcal{A}=\Delta$ can be viewed as a viscous regularization for the model of inviscid pressureless gases.

The system of the form \eqref{continuity equation}$\&$\eqref{pressureless momentum equation} has been studied by several authors. When $n=1$ and $\mathcal{A}=\Delta$, Boudin \cite{boudin siam 2000} proved the existence of a global smooth solution to \eqref{continuity equation}$\&$\eqref{pressureless momentum equation}. Perepelitsa \cite{Perepelitsa 2010} considered \eqref{continuity equation}$\&$\eqref{pressureless momentum equation} as a simplified model of compressible isentropic Navier-Stokes equations and he proved the global existence of a small energy weak solution with the density being a nonnegative bounded function throughout the half-space $\R_+^3$. Recently, Danchin et al. \cite{danchin jee 2021} formally derived the system \eqref{continuity equation}$\&$\eqref{pressureless momentum equation}, with $\mathcal{A}$ being the Laplacian or the Lam\'{e} operator, as a model of some collective behavior phenomena. They also proved the existence and uniqueness of a global solution with the initial density being only bounded and close to a constant in $L^\infty$-norm.

In this work, we are particularly interested in solving \eqref{continuity equation}$\&$\eqref{momentum equation with pressure} and \eqref{continuity equation}$\&$\eqref{pressureless momentum equation} via the Lagrangian method (see \cite{danchin aif 2014,danchin cpam 2012}). The advantage is that one can convert the hyperbolic-parabolic coupled system into a parabolic system. Then the uniqueness and stability issues can be tackled in a relatively easy way compared to solving the system in Eulerian coordinates. In this framework, under a scaling-invariant smallness condition on the initial velocity, the heart of the matter is to bound the quantity $\int_0^\infty\|\nabla u(t)\|_\infty\,dt$ because this would imply the existence of global-in-time coordinate transformations. Without going into details, we are led to consider the linearized system of the Lagrangian formulation of \eqref{continuity equation}$\&$\eqref{pressureless momentum equation} (or, \eqref{continuity equation}$\&$\eqref{momentum equation with pressure}) that reads
\begin{align}\label{linearized Lagrangian formulation}
\rho(x)\partial_tu-\mathcal{A}u=f.
\end{align}
Note that the coefficient $\rho$ is now a time-independent function. In the incompressible case, the operator $\mathcal{A}$ in \eqref{linearized Lagrangian formulation} is different from the one in \eqref{momentum equation with pressure}. Indeed, we need to introduce a so-called Stokes operator to unify the internal force (viscosity and pressure) in \eqref{momentum equation with pressure} (see \cite{huan 2021}). But $\mathcal{A}$ is just what it used to be in the compressible pressureless case. {\it Now the main purpose of this paper is to derive the estimate of $\int_0^\infty\|\nabla u(t)\|_\infty\,dt$ for solutions $u$ to \eqref{linearized Lagrangian formulation} under least regularity assumption on the coefficient $\rho$.} To achieve this, we shall study the maximal $L^1$-in-time regularity for solutions to \eqref{linearized Lagrangian formulation} in homogeneous type spaces.

If $\rho$ is close to some constant, \eqref{linearized Lagrangian formulation} is essentially a perturbation of a linear system with constant coefficients. In this case, Danchin and Mucha \cite{danchin cpam 2012} established a maximal $L^1$ regularity result for a linear Stokes system with discontinuous coefficients (including piecewise constant densities). In our work, we do not assume any smallness condition on the fluctuation of the density. So \eqref{linearized Lagrangian formulation} is no longer a perturbation problem, but we can rewrite it as the following abstract Cauchy problem 
\begin{align}\label{transform linearized Lagrangian formulation}
\partial_tu-\rho^{-1}\mathcal{A}u=\Tilde{f}\vcentcolon=\rho^{-1}f
\end{align}
associated with the composite operator $\rho^{-1}\mathcal{A}$, provided that $\rho$ has a positive lower bound. What makes the maximal $L^1$ regularity for \eqref{transform linearized Lagrangian formulation} possible is the observation that the composite operator $\rho^{-1}\mathcal{A}$ behaves similarly to some operator with constant coefficients, in the sense that certain Besov-type norms defined via the semigroups generated by both operators are equivalent. This was one of the key observations made in \cite{huan 2021} in which the author of the present paper proved the first maximal $L^1$ regularity result concerning viscous incompressible fluids with truly variable densities. 

In \cite{huan 2021}, we were only able to work in the $L^2$ (in space) framework due to the presence of pressure. In this paper, we mainly focus on the pressureless case, and we will work in the general $L^p$ (in space) framework. A practical benefit of doing so is that one can lower the regularity of the density (see \cite{danchin cpam 2012}). For the analysis in \cite{huan 2021} to adapt to the $L^p$ framework, we need to make the extra effort to obtain pointwise bounds for the kernel of the semigroup generated by $\rho^{-1}\mathcal{A}$. Let us consider two concrete examples. For $\mathcal{A}=\Delta$ (the Laplacian), McIntosh and Nahmod \cite{mcintosh 2000} proved that the kernel of the $L^2$ semigroup $e^{t\rho^{-1}\Delta}$ generated by $\rho^{-1}\Delta$ satisfies Gaussian bounds (see also \cite{duong DIE 1999}). This guarantees that the semigroup $e^{t\rho^{-1}\Delta}$ extrapolates to a bounded analytic semigroup on $L^p$, $1<p<\infty$. Note that the kernel of $e^{t\rho^{-1}\Delta}$ is essentially a scalar kernel. If $\mathcal{A}$ is the Lam\'{e} operator (see Section \ref{preliminary section}), however, \eqref{linearized Lagrangian formulation} is a truly coupled system whose fundamental matrix does not necessarily satisfy Gaussian bounds. Nevertheless, we can prove the bounds for the fundamental matrix and its derivatives using a rather classical method if the dimensions of the Euclidean space $\le3$. The tricks are due to Davies, one is to use Sobolev inequalities to bound $L^\infty$-norm (see \cite{davies jfa 1995}), the other is a perturbation technique to obtain exponential decay (see \cite{davies ajm 1987}). In the spirit of \cite{mcintosh 2000,duong DIE 1999}, once we obtain Gaussian upper bounds of the fundamental matrix (denoted by $K_t(x,y)$), we can easily get the $C^{1,\gamma}$ estimates for the kernel $K_t(x,y)\rho^{-1}(y)$.

Before we study the maximal regularity for \eqref{transform linearized Lagrangian formulation}, we will establish a maximal $L^1$-in-time regularity result for the abstract Cauchy problem
\begin{eqnarray}\label{ACP associated with composite operator}
\left\{\begin{aligned}
&u'(t)-\mathcal{S}u(t)=f(t),\\
&u(0)=x
\end{aligned}\right.
\end{eqnarray}
in homogeneous type spaces. Let us assume that $\mathcal{S}$ is an unbounded linear operator on a Banach space $(X,\|\cdot\|)$ that generates a bounded analytic semigroup $e^{t\mathcal{S}}$. Given \eqref{ACP associated with composite operator} with $x=0$, $\mathcal{S}$ is said to have maximal $L^r$-in-time regularity in $X$ for $r\in[1,\infty]$, if for every $f\in L^r((0,\infty);X)$, \eqref{ACP associated with composite operator} has a unique solution verifying
\begin{align}\label{definition of maximal regularity}
\|\mathcal{S}u\|_{L^r((0,\infty);X)}\le C\|f\|_{L^r((0,\infty);X)}.
\end{align}
The maximal $L^r$ regularity issue for $r\in(1,\infty)$ has been extensively studied in the literature. We refer to \cite{da prato jmpa 1975,denk et al book 2003,dore and venni 1987,giga jfa 1991,pruss conf 2002}, amongst which \cite{da prato jmpa 1975} also covered the $L^1$ theory, but the global-in-time estimate \eqref{definition of maximal regularity} holds only if $0$ belongs to the resolvent set $\rho(\mathcal{S})$ of $\mathcal{S}$ (i.e., $\mathcal{S}^{-1}\in\mathscr{L}(X)$). It goes without saying that such a condition is very demanding in many concrete examples. Recently, Ri and Farwig \cite{ri arxiv 2020} established maximal $L^1$ regularity for $\mathcal{S}$ in inhomogeneous type spaces without assuming $0\in\rho({\mathcal{S}})$. Later, a similar result in the homogeneous space setting was proved by Danchin et al. \cite{danchin arxiv 2020}. The authors in \cite{danchin arxiv 2020} also nicely explained the importance of maximal $L^1$ regularity for parabolic systems in homogeneous spaces. Our work is more relevant to the one in \cite{danchin arxiv 2020}. But \cite{danchin arxiv 2020} did not cover maximal regularity in homogeneous spaces with negative regularity. For us, working in spaces with negative regularity can weaken the regularity of the density. Our method is motivated by our prior work \cite{huan 2021} in which we obtained maximal $L^1$ regularity for a generalized Stokes operator with variable coefficients in homogeneous Besov spaces. It turns out that the strategy of the proof of the concrete result in \cite{huan 2021} works equally well for the abstract problem.

This paper is organized as follows. Section \ref{preliminary section} is a short review of some basics needed in this paper. In Section \ref{heat kernel estimate section}, we prove the $C^{1,\gamma}$ regularity for $K_t(x,y)b(y)$, where $K_t(x,y)$ is the matrix-valued heat kernel of $-b\mathcal{L}$ and $\mathcal{L}$ is the Lam\'{e} operator. We remark that the coefficient $b$ is only bounded and bounded from below by a positive constant. In Section \ref{maximal regularity for ACP section}, we follow our prior work \cite{huan 2021} closely and derive the maximal $L^1$ regularity for the abstract Cauchy problem \eqref{ACP associated with composite operator} when $\mathcal{S}$ is a composition of bounded and unbounded operators. Then, in Section \ref{concrete parabolic system section}, we apply the abstract theory to study the maximal $L^1$ regularity for \eqref{linearized Lagrangian formulation}, where $\mathcal{A}$ is the Laplacian or the Lam\'{e} operator. Section \ref{compressible pressureless flow section} is devoted to the global-in-time well-posedness of the pressureless system \eqref{continuity equation}$\&$\eqref{pressureless momentum equation}.

\bigskip

\noindent{\bf Notations:} Throughout, the letter $C$ denotes a harmless positive constant that may change from line to line, but whose meaning is clear from the context. The notation $a\lesssim b$ means $a\leq Cb$ for some $C$, and $a\simeq b$ means $a\lesssim b$ and $b\lesssim a$. For two quantities $a, b$, we denote by $a\vee b$ the bigger quantity and by $a\wedge b$ the smaller one. We denote by $\|\cdot\|_p$ the Lebesgue $L^p$-norm. For a matrix $A$, $A^{\intercal}$ denotes its transpose. For a Banach space $X$, $\mathscr{L}(X)$ denotes the space of all continuous linear operators on $X$. For $q\in[1,\infty]$, we may write $\|\cdot\|_{L_t^q(X)}$ for the norm of the space $L^q((0,t);X)$, and $\|\cdot\|_{L^q(X)}$ for the norm of $L^q(\R_+;X)$, where $\R_+=(0,\infty)$. Finally, we denote operators on Banach spaces by "mathcal" letters (e.g., $\mathcal{A}$, $\mathcal{B}$, $\mathcal{S}$, etc.).

\bigskip

\section{Preliminaries}\label{preliminary section}

\bigskip

\subsection{Semigroups and abstract Cauchy problem}
In this paper, we only consider real vector spaces and no complexification is needed. Let $(X,\|\cdot\|)$ be a real Banach space. We adopt the concept that a $C_0$ semigroup $\{\mathcal{T}(t)\}_{t\ge0}$ on $X$ is called a bounded $C_0$ semigroup if $\|\mathcal{T}(t)\|\le M<\infty$ for all $t\ge0$, while it is called a contraction semigroup if $M=1$. 

\begin{definition}
$\{\mathcal{T}(t)\}_{t\ge0}$ is called a bounded analytic semigroup on $X$ if it is a bounded $C_0$ semigroup with generator $\mathcal{A}$ such that $\mathcal{T}(t)x\in D(\mathcal{A})$ for all $x\in X$ and $t>0$, and
\begin{align}\label{real analyticity criterion}
\sup_{t>0}\|t\mathcal{A}\mathcal{T}(t)x\|\le C\|x\|,\ \ \forall x\in X.
\end{align}
\end{definition}

\begin{remark}
In applications, one only needs to show \eqref{real analyticity criterion} for $x$ belonging to a dense subspace of $X$ since $\mathcal{A}$ is closed.
\end{remark}

\begin{remark}
In fact, \eqref{real analyticity criterion} is also a real characterization of complex analyticity, see, for example, {\cite[Theorem~4.6]{engel book 2000}}, or {\cite[Theorem~3.7.19]{arendt book 2011}}.
\end{remark}

Let $(H,\langle\cdot,\cdot\rangle)$ be a real Hilbert space. A linear operator $\mathcal{A}:D(\mathcal{A})\subset H\rightarrow H$ is called dissipative on $H$ if and only if
\begin{align*}
\langle  \mathcal{A}x,x\rangle\le0,\ \forall x\in D(\mathcal{A}).
\end{align*}
We have the following well-known result:
\begin{theorem}\label{generation theorem of selfadjoint operator}
Let $\mathcal{A}$ be a self-adjoint operator on $H$. Then $\mathcal{A}$ generates an analytic semigroup of contraction $\{e^{t\mathcal{A}}\}_{t\ge0}$ if and only if $\mathcal{A}$ is dissipative. Moreover, $e^{t\mathcal{A}}$ is self-adjoint on $H$ for every $t\ge0$.
\end{theorem}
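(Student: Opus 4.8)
The plan is to treat the two implications separately and to read off the self-adjointness of $e^{t\mathcal{A}}$ as a by-product of the construction. The necessity of dissipativity is the easy direction. Suppose $\mathcal{A}$ generates the contraction semigroup $\{e^{t\mathcal{A}}\}_{t\ge0}$ and fix $x\in D(\mathcal{A})$. Then $t\mapsto\|e^{t\mathcal{A}}x\|^2$ is differentiable on $[0,\infty)$ with derivative $2\langle\mathcal{A}e^{t\mathcal{A}}x,e^{t\mathcal{A}}x\rangle$, and it is non-increasing because $\|e^{t\mathcal{A}}x\|\le\|x\|$; evaluating the derivative at $t=0$ gives $\langle\mathcal{A}x,x\rangle\le0$. (Equivalently, one runs the one-line Lumer--Phillips computation $\langle\mathcal{A}x,x\rangle=\lim_{t\downarrow0}t^{-1}(\langle e^{t\mathcal{A}}x,x\rangle-\|x\|^2)\le0$.)

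For the sufficiency I would invoke the spectral theorem for the self-adjoint operator $\mathcal{A}$. Dissipativity together with self-adjointness forces $\sigma(\mathcal{A})\subset(-\infty,0]$: the spectrum is real since $\mathcal{A}$ is self-adjoint, and for $\lambda>0$ and $x\in D(\mathcal{A})$ one has $\|(\lambda-\mathcal{A})x\|\,\|x\|\ge\langle(\lambda-\mathcal{A})x,x\rangle\ge\lambda\|x\|^2$, so $\lambda-\mathcal{A}$ is bounded below, and a standard orthogonality argument (using $D((\lambda-\mathcal{A})^\ast)=D(\mathcal{A})$) shows it is surjective as well, hence $(0,\infty)\subset\rho(\mathcal{A})$. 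Writing $\mathcal{A}=\int_{-\infty}^0\lambda\,dE_\lambda$, I define $e^{t\mathcal{A}}\vcentcolon=\int_{-\infty}^0 e^{t\lambda}\,dE_\lambda$ via the Borel functional calculus. Since $0\le e^{t\lambda}\le1$ on $(-\infty,0]$, each $e^{t\mathcal{A}}$ is a bounded self-adjoint operator with $\|e^{t\mathcal{A}}\|\le1$, which already yields the ``moreover'' statement; the identity $e^{t\lambda}e^{s\lambda}=e^{(t+s)\lambda}$ and the pointwise limit $e^{t\lambda}\to1$ as $t\downarrow0$, combined with dominated convergence for the spectral measure, show that $\{e^{t\mathcal{A}}\}_{t\ge0}$ is a $C_0$ semigroup, and the usual differentiation argument identifies its generator with $\mathcal{A}$. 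Finally, analyticity in the sense of \eqref{real analyticity criterion} follows from $\|t\mathcal{A}e^{t\mathcal{A}}\|=\sup_{\lambda\le0}|t\lambda e^{t\lambda}|=\sup_{s\ge0}se^{-s}=e^{-1}$.

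The one genuine wrinkle is that the spectral theorem is classically stated over a complex Hilbert space, whereas the paper works exclusively with real spaces. I would dispatch this with a short complexification used only inside this proof: pass to $H_{\mathbb C}=H\oplus iH$, extend $\mathcal{A}$ to $\mathcal{A}_{\mathbb C}(x+iy)=\mathcal{A}x+i\mathcal{A}y$, which is again self-adjoint and dissipative, apply the complex spectral theorem there, and observe that $\mathcal{A}_{\mathbb C}$ commutes with the conjugation $x+iy\mapsto x-iy$, hence so does $e^{t\mathcal{A}_{\mathbb C}}$; therefore $e^{t\mathcal{A}_{\mathbb C}}$ preserves the real subspace $H$ and restricts to the desired real operator $e^{t\mathcal{A}}$, which inherits all the norm, semigroup, and self-adjointness properties above. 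Alternatively, one can cite a real-variable Hille--Yosida theorem for self-adjoint dissipative operators together with the remark following the definition of bounded analytic semigroup (which records that \eqref{real analyticity criterion} characterizes analyticity). I expect this bookkeeping around complexification, rather than any analytic estimate, to be the only point requiring care; everything else is the textbook functional-calculus argument.
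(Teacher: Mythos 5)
The paper does not prove this theorem: it records it as a ``well-known result'' and points to Example~3.7.5 and Corollary~3.3.9 of Arendt--Batty--Hieber--Neubrander for the complex version, with no further argument. Your proof is a correct, self-contained rendering of the standard argument those references encapsulate: the easy differentiation/Lumer--Phillips computation for necessity, the spectral-theorem construction $e^{t\mathcal{A}}=\int_{-\infty}^0 e^{t\lambda}\,dE_\lambda$ for sufficiency (with the sharp bound $\|t\mathcal{A}e^{t\mathcal{A}}\|=e^{-1}$ establishing \eqref{real analyticity criterion}), and self-adjointness of $e^{t\mathcal{A}}$ falling out of the functional calculus for free. You are also right that the only genuine subtlety, given that the paper works over real Hilbert spaces, is the complexification, and your treatment --- extend to $H\oplus iH$, apply the complex spectral theorem, observe that the semigroup commutes with the canonical conjugation and hence leaves $H$ invariant --- closes that gap cleanly. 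One small caution: your parenthetical alternative of ``cite a real Hille--Yosida theorem plus the remark that \eqref{real analyticity criterion} characterizes analyticity'' is not, by itself, a complete substitute, since Lumer--Phillips/Hille--Yosida produces the contraction $C_0$ semigroup but not the analyticity bound; one would still need the spectral calculus or an equivalent resolvent estimate for that. Your main line of argument supplies it directly, so this is a loose end in the aside rather than a flaw in the proof you actually wrote.
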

For the complex version of Theorem \ref{generation theorem of selfadjoint operator}, we refer to {\cite[Example~3.7.5]{arendt book 2011}} and {\cite[Corollary~3.3.9]{arendt book 2011}}. 

In applications, we will first apply Theorem \ref{generation theorem of selfadjoint operator} to construct a semigroup on $L^2$, and then extrapolate it to some Besov spaces. However, it is usually not easy to identify the generator of the new semigroup. In this situation, we wish
to identify the generator restricted on a dense subspace of its domain. Recall that a subspace $Y$ of the domain $D(\mathcal{A})$ of a linear operator $\mathcal{A}:D(\mathcal{A})\subset X\rightarrow X$ is called a core for $\mathcal{A}$ if $Y$ is dense in $D(\mathcal{A})$ for the graph norm $\|x\|_{D(\mathcal{A})}\vcentcolon=\|x\|+\|\mathcal{A}x\|$. In other words, $Y$ is a core for $\mathcal{A}$ if and only if $\mathcal{A}$ is the closure of $\mathcal{A}|_{Y}$. The next result gives a useful sufficient condition for a subspace to be a core for the generator.

\begin{lemma}[see {\cite[p. 53]{engel book 2000}}]\label{identify a core}
Let $\mathcal{A}$ be the infinitesimal generator of a $C_0$ semigroup $\mathcal{T}(t)$ on $X$. If $Y\subset D(\mathcal{A})$ is a dense subspace of $X$ and invariant under $\mathcal{T}(t)$ (i.e., $\mathcal{T}(t)Y\subset Y$), then $Y$ is a core for $\mathcal{A}$.
\end{lemma}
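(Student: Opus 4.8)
The plan is to reduce the statement to the single claim that the range $(\lambda-\mathcal{A})Y$ is dense in $X$ for one $\lambda$ in the resolvent set, after which a routine manipulation with the bounded resolvent $R(\lambda,\mathcal{A})$ produces the approximating sequences required by the definition of a core. Concretely, saying that $Y$ is a core for $\mathcal{A}$ means that for every $x\in D(\mathcal{A})$ there exist $y_n\in Y$ with $y_n\to x$ and $\mathcal{A}y_n\to\mathcal{A}x$. Choose $\lambda$ larger than the growth bound of $\mathcal{T}(t)$, so that $\lambda\in\rho(\mathcal{A})$ and $R(\lambda,\mathcal{A})\in\mathscr{L}(X)$. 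Granting that $(\lambda-\mathcal{A})Y$ is dense, given $x\in D(\mathcal{A})$ I pick $y_n\in Y$ with $(\lambda-\mathcal{A})y_n\to(\lambda-\mathcal{A})x$; applying the bounded operator $R(\lambda,\mathcal{A})$ gives $y_n\to x$, and then $\mathcal{A}y_n=\lambda y_n-(\lambda-\mathcal{A})y_n\to\lambda x-(\lambda-\mathcal{A})x=\mathcal{A}x$, which is precisely graph-norm convergence.

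The heart of the matter is therefore the density of $(\lambda-\mathcal{A})Y$, and this is where the invariance hypothesis $\mathcal{T}(t)Y\subset Y$ is used. By Hahn--Banach it suffices to show that any $\phi\in X^{*}$ annihilating $(\lambda-\mathcal{A})Y$ is zero. For $y\in Y$ set $g(t)\vcentcolon=\phi(\mathcal{T}(t)y)$. Since $y\in D(\mathcal{A})$, the orbit map $t\mapsto\mathcal{T}(t)y$ is $C^{1}$ with derivative $\mathcal{A}\mathcal{T}(t)y$, so $g$ is differentiable with $g'(t)=\phi(\mathcal{A}\mathcal{T}(t)y)$. Invariance gives $\mathcal{T}(t)y\in Y$ (and of course $\mathcal{T}(t)y\in D(\mathcal{A})$), hence $\phi\big((\lambda-\mathcal{A})\mathcal{T}(t)y\big)=0$, i.e. $g'(t)=\lambda g(t)$, so $g(t)=g(0)e^{\lambda t}=\phi(y)\,e^{\lambda t}$. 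On the other hand $|g(t)|\le\|\phi\|\,\|\mathcal{T}(t)\|\,\|y\|\le C e^{\omega t}\|y\|$ with $\omega<\lambda$; letting $t\to\infty$ forces $\phi(y)=0$. As this holds for all $y\in Y$ and $Y$ is dense in $X$, we conclude $\phi=0$, and the density follows.

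I expect the main (really the only) subtlety to be the standard bookkeeping of $C_{0}$-semigroup theory: the differentiability of the orbit map and the identity $\frac{d}{dt}\mathcal{T}(t)y=\mathcal{A}\mathcal{T}(t)y=\mathcal{T}(t)\mathcal{A}y$ for $y\in D(\mathcal{A})$, and the fact that $(\lambda,\infty)$ lies in $\rho(\mathcal{A})$ with $R(\lambda,\mathcal{A})$ bounded once $\lambda$ exceeds the growth bound — facts I would simply cite. Everything else is elementary: a Hahn--Banach separation, a first-order linear ODE, and a growth comparison. I would also note that in all our applications $\mathcal{T}(t)$ is a bounded (indeed analytic) semigroup, so one may just take any $\lambda>0$.
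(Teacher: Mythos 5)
Your proof is correct. Note that the paper does not prove this lemma at all — it simply cites Engel--Nagel (p.~53), where the standard argument is quite different from yours: there one takes $x\in D(\mathcal{A})$, approximates it in $X$ by $x_n\in Y$, and shows that the averaged orbits $\frac1t\int_0^t\mathcal{T}(s)x_n\,ds$ lie in the graph-norm closure of $Y$ (their Riemann sums belong to $Y$ by invariance and converge in graph norm, since $\mathcal{A}\mathcal{T}(s)x_n=\mathcal{T}(s)\mathcal{A}x_n$ is continuous in $s$), then lets $n\to\infty$ and $t\to0^+$, using $\mathcal{A}\frac1t\int_0^t\mathcal{T}(s)x\,ds=\frac1t(\mathcal{T}(t)x-x)$. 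That route is constructive, stays entirely inside the semigroup calculus, and needs neither the resolvent nor duality. Your route instead reduces the core property to density of $(\lambda-\mathcal{A})Y$ for one $\lambda>\omega_0$ and kills an annihilating functional $\phi$ by solving the scalar ODE $g'=\lambda g$ for $g(t)=\phi(\mathcal{T}(t)y)$ and comparing with the growth bound $\|\mathcal{T}(t)\|\le Me^{\omega t}$, $\omega\in(\omega_0,\lambda)$; this is a legitimate classical alternative, and all the facts you invoke ($(\omega_0,\infty)\subset\rho(\mathcal{A})$, differentiability of orbit maps on $D(\mathcal{A})$, the Hahn--Banach density criterion, all valid in the paper's real-Banach-space setting) are standard. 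What your argument buys is a very short verification once the resolvent machinery is granted; what the cited proof buys is independence from duality and from any spectral information, which is why it is the textbook default. Either way the lemma stands as used in the paper.
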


Next, we recall shortly how to use semigroups to solve abstract Cauchy problems. Suppose that $\mathcal{A}$ is the infinitesimal generator of a $C_0$ semigroup $e^{t\mathcal{A}}$ on a Banach space $(X,\|\cdot\|)$. We are concerned with the inhomogeneous abstract Cauchy problem
\begin{eqnarray}\label{ACP}
\left\{\begin{aligned}
&u'(t)-\mathcal{A}u(t)=f(t),\ \ 0<t\le T,\\
&u(0)=x.
\end{aligned}\right.
\end{eqnarray}
We assume that $x\in X$ and the inhomogeneous term $f$ only belongs to $L^1((0,T);X)$. Then \eqref{ACP} always has a unique {\it mild solution} $u\in C([0,T];X)$ given by the formula
\begin{equation*}
u(t)=e^{t\mathcal{A}}x+\int_0^t e^{(t-\tau)\mathcal{A}}f(\tau)\,d\tau.
\end{equation*}
A continuous function $u$ is called a {\it strong solution} if $u\in W^{1,1}((0,T);X)\cap L^1((0,T);D(\mathcal{A}))$ satisfies \eqref{ACP} for a.e. $t\in(0,T)$. A strong solution is also a mild solution. Conversely, a mild solution with suitable regularity becomes a strong one.

\begin{lemma}[see {\cite[Theorem~2.9]{pazy book 1983}}]\label{mild to strong}
Let $u\in C([0,T];X)$ be a mild solution to \eqref{ACP}. If $u\in W^{1,1}((0,T),X)$, or $u\in L^1((0,T),D(\mathcal{A}))$, then $u$ is a strong solution.
\end{lemma}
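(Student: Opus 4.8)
The plan is to reduce everything to the integral characterization of mild solutions. As a preliminary step I would show that for every $t\in[0,T]$,
\begin{equation*}
\int_0^t u(\tau)\,d\tau\in D(\mathcal{A}),\qquad \mathcal{A}\int_0^t u(\tau)\,d\tau=u(t)-x-\int_0^t f(\tau)\,d\tau.\tag{$\star$}
\end{equation*}
This is obtained by inserting the representation $u(t)=e^{t\mathcal{A}}x+\int_0^t e^{(t-\tau)\mathcal{A}}f(\tau)\,d\tau$, applying Fubini's theorem to the resulting double time integral, using the elementary identities $\int_0^s e^{r\mathcal{A}}y\,dr\in D(\mathcal{A})$ and $\mathcal{A}\int_0^s e^{r\mathcal{A}}y\,dr=e^{s\mathcal{A}}y-y$ for $y\in X$, and finally pulling $\mathcal{A}$ through the outer Bochner integral by invoking the closedness of $\mathcal{A}$ (Hille's theorem).

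Assume first that $u\in L^1((0,T);D(\mathcal{A}))$. Since $\mathcal{A}$ is closed it commutes with Bochner integration on $L^1((0,T);D(\mathcal{A}))$, so $\mathcal{A}\int_0^t u(\tau)\,d\tau=\int_0^t\mathcal{A}u(\tau)\,d\tau$. Combining this with $(\star)$ yields
\begin{equation*}
u(t)=x+\int_0^t\big(\mathcal{A}u(\tau)+f(\tau)\big)\,d\tau,\qquad 0\le t\le T,
\end{equation*}
and because $\mathcal{A}u+f\in L^1((0,T);X)$ this forces $u\in W^{1,1}((0,T);X)$ with $u'(t)=\mathcal{A}u(t)+f(t)$ for a.e.\ $t$; hence $u$ is a strong solution.

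Assume instead that $u\in W^{1,1}((0,T);X)$, so that $u(t)=x+\int_0^t u'(\tau)\,d\tau$ (recall $u(0)=x$). Plugging this into $(\star)$ gives $\mathcal{A}\int_0^t u(\tau)\,d\tau=\int_0^t g(\tau)\,d\tau$ with $g\vcentcolon=u'-f\in L^1((0,T);X)$. I would then fix $\lambda$ in the resolvent set of $\mathcal{A}$ (large $\lambda$ work, since $\mathcal{A}$ generates a $C_0$ semigroup) and apply the bounded operator $\mathcal{A}R(\lambda,\mathcal{A})=\lambda R(\lambda,\mathcal{A})-I$ to this identity; since a bounded operator commutes with the integral, one gets $\int_0^t\mathcal{A}R(\lambda,\mathcal{A})u(\tau)\,d\tau=\int_0^t R(\lambda,\mathcal{A})g(\tau)\,d\tau$ for all $t$, hence $\mathcal{A}R(\lambda,\mathcal{A})u(t)=R(\lambda,\mathcal{A})g(t)$ for a.e.\ $t$. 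Rewriting the left-hand side as $\lambda R(\lambda,\mathcal{A})u(t)-u(t)$ shows $u(t)=R(\lambda,\mathcal{A})\big(\lambda u(t)-g(t)\big)$ a.e., so $u(t)\in D(\mathcal{A})$ and $(\lambda-\mathcal{A})u(t)=\lambda u(t)-g(t)$, i.e.\ $\mathcal{A}u(t)=g(t)=u'(t)-f(t)$ a.e. In particular $\mathcal{A}u=g\in L^1((0,T);X)$, so $u\in L^1((0,T);D(\mathcal{A}))$ and $u$ is again a strong solution.

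The only point requiring care is the identity $(\star)$, precisely the interchange of $\mathcal{A}$ with the outer time integral; this is legitimate because $\mathcal{A}$ is closed and $\tau\mapsto\mathcal{A}\int_0^{t-\tau}e^{r\mathcal{A}}f(\tau)\,dr=e^{(t-\tau)\mathcal{A}}f(\tau)-f(\tau)$ belongs to $L^1((0,t);X)$. Everything else is routine manipulation of Bochner integrals and resolvents. One could instead treat the $W^{1,1}$ case by differentiating the mild-solution formula directly and identifying the a.e.\ pointwise right derivative at Lebesgue points of $f$, but the resolvent argument above avoids these measure-theoretic technicalities.
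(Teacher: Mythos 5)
Your argument is correct, but it follows a different route from the proof the paper leans on (Pazy, Theorem 4.2.9): Pazy works directly with the difference-quotient identity $\tfrac{1}{h}(u(t+h)-u(t))=\tfrac{1}{h}(e^{h\mathcal{A}}-I)u(t)+\tfrac{1}{h}\int_t^{t+h}e^{(t+h-\tau)\mathcal{A}}f(\tau)\,d\tau$ and passes to the limit at points that are simultaneously Lebesgue points of $f$ and points of differentiability of $u$ (resp.\ points where $u(t)\in D(\mathcal{A})$), whereas you first integrate the Duhamel formula to obtain the integrated equation $(\star)$ and then close the argument with Hille's theorem for the closed operator $\mathcal{A}$ in the $L^1(D(\mathcal{A}))$ case, and with a resolvent regularization plus a.e.\ differentiation of indefinite Bochner integrals in the $W^{1,1}$ case. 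Your route buys a cleaner reduction — $(\star)$ is exactly the "integrated solution" characterization, and all pointwise-differentiability bookkeeping is replaced by the standard facts that $\mathcal{A}$ commutes with Bochner integration on its domain and that two $L^1$ functions with equal primitives agree a.e.; Pazy's route is more elementary in that it needs neither Hille's theorem nor resolvents. One small rewording in your second case: the identity $\int_0^t\mathcal{A}R(\lambda,\mathcal{A})u(\tau)\,d\tau=\int_0^t R(\lambda,\mathcal{A})g(\tau)\,d\tau$ is obtained by applying the resolvent $R(\lambda,\mathcal{A})$ to $(\star)$, commuting it with $\mathcal{A}$ on $D(\mathcal{A})$, and only then pulling the \emph{bounded} operator $\mathcal{A}R(\lambda,\mathcal{A})$ inside the integral — applying $\mathcal{A}R(\lambda,\mathcal{A})$ itself to both sides of the identity would produce $\mathcal{A}R(\lambda,\mathcal{A})\mathcal{A}\int_0^t u$, which is not what you use; the rest of the deduction ($u(t)=R(\lambda,\mathcal{A})(\lambda u(t)-g(t))$ a.e., hence $u(t)\in D(\mathcal{A})$ and $\mathcal{A}u=u'-f$) is fine.
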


\subsection{Homogeneous Besov spaces}
In most literature on the theory of function spaces, the homogeneous Besov spaces are defined in the ambient space of tempered distributions modulo polynomials (see, e.g., \cite{Triebel book 1983}). However, we wish to avoid this type of spaces when solving nonlinear PDEs. In this paper, we adopt the definitions of homogeneous spaces in {\cite[Section~2.3]{BCD}}. Let $\mathscr{S}_{h}^{'}(\R^n)$ denote the space of all tempered distributions $u\in\mathscr{S}^{'}(\R^n)$ that satisfy
\begin{align*}
u=\sum_{ j\in \mathbb{Z}}\dot{\Delta }_{j}u\ \  \mathrm{in}\  \mathscr{S}^{'},
\end{align*}
where $\dot{\Delta}_{j}$'s are the homogeneous dyadic blocks (see {\cite[Chapter~2]{BCD}}).

\begin{definition}
Let $s\in\R$ and $1\leq p,r\leq\infty$. The homogeneous Besov space $\dot{B}_{p,r}^{s}(\R^n)$
consists of all distributions $u$ in $\mathscr{S}_{h}^{'}(\R^n)$ such that
$$
\|u\|_{\dot{B}_{p,r}^{s}}\vcentcolon=\left\|\big(2^{js}\|\dot{\Delta}_{j}u\|_{L^{p}}\big)_{j\in\mathbb{Z}}\right\|_{l^{r}}
<\infty.
$$
\end{definition}

Let us point out a couple of useful facts about the homogeneous Besov spaces. First, as an immediate consequence of the definition, we see that
\begin{align*}
\lim_{N\rightarrow\infty}\sum_{|j|<N}\dot{\Delta }_{j}u=u \ \ \mathrm{in}\ \dot{B}_{p,1}^{s}
\end{align*}
for every $u\in\dot{B}_{p,1}^{s}$. This implies that $W^{\infty,p}$ is dense in $\dot{B}_{p,1}^{s}$. Second, the spaces $\dot{B}_{p,r}^{s}$ are not always complete. In fact, $\dot{B}_{p,r}^{s}$ is complete only if $s<\frac{n}{p}$, or $s=\frac{n}{p}$ and $r=1$ (see {\cite[Theorem~2.25]{BCD}}). Finally, the product of two distributions in certain Besov spaces can be defined via the so-called {\it paraproduct}. In Section \ref{compressible pressureless flow section}, we will extensively use the product laws
\begin{align*}
\|uv\|_{\dot{B}_{p,1}^{n/p}}\lesssim\|u\|_{\dot{B}_{p,1}^{n/p}}\|v\|_{\dot{B}_{p,1}^{n/p}},\ \ 1\le p<\infty
\end{align*}
and
\begin{align*}
\|uv\|_{\dot{B}_{p,1}^{n/p-1}}\lesssim\|u\|_{\dot{B}_{p,1}^{n/p}}\|v\|_{\dot{B}_{p,1}^{n/p-1}},\ \ 1\le p<2n.
\end{align*}

It is sometimes quite useful to associate norms with operators arising from PDEs. For example, we can characterize the homogeneous Besov norms via the heat semigroups.
\begin{lemma}[see {\cite[Theorem~2.34]{BCD}}]\label{characterization via classic heat kernel}
Suppose that $s\in\R$ and $(p,q)\in[1,\infty]^2$. If $k>s/2$ and $k\ge0$, we have
\begin{align*}
\|u\|_{\dot{B}_{p,q}^{s,\Delta}}\vcentcolon=\left\|t^{-s/2}\|(t\Delta)^ke^{t\Delta}u\|_p\right\|_{L^q(\R_+;\frac{dt}{t})}\simeq\|u\|_{\dot{B}_{p,q}^{s}},\ \ \forall u\in\mathscr{S}_{h}^{'}.
\end{align*}
\end{lemma}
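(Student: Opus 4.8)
\emph{Proof proposal.} The plan is to prove the two-sided bound by running the Littlewood--Paley decomposition of $u$ (which is unambiguous since we work inside $\mathscr{S}_h'$) and transferring information between the continuous heat-time variable $t$ and the dyadic frequency scale $2^{-2j}$. Throughout, set $a_j:=\|\dot{\Delta}_j u\|_p$, $b_j:=2^{js}a_j$ (so that $\|u\|_{\dot{B}^s_{p,q}}=\|(b_j)_{j\in\mathbb{Z}}\|_{\ell^q}$), and $F(t):=t^{-s/2}\|(t\Delta)^k e^{t\Delta}u\|_p$, so $\|u\|_{\dot{B}^{s,\Delta}_{p,q}}=\|F\|_{L^q(\R_+,\,dt/t)}$. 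I would prove the equivalence by establishing the two inequalities $\|F\|_{L^q(dt/t)}\lesssim\|u\|_{\dot{B}^s_{p,q}}$ and $\|u\|_{\dot{B}^s_{p,q}}\lesssim\|F\|_{L^q(dt/t)}$ separately, each by a matched-scale argument.

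For the first inequality, write $u=\sum_j\dot{\Delta}_j u$ and apply the classical heat-decay estimate for spectrally localized functions (e.g.\ \cite[Lemma~2.4]{BCD} in its $L^p$ form): since $\widehat{\dot{\Delta}_j u}$ is supported in an annulus of size $\sim 2^j$, one has $\|(t\Delta)^k e^{t\Delta}\dot{\Delta}_j u\|_p\lesssim (t2^{2j})^k e^{-ct2^{2j}}a_j$ for a universal $c>0$. Summing in $j$ and performing the substitution $\tau=t2^{2j}$, which exactly absorbs the weight $t^{-s/2}$, gives
\[
F(t)\le C\sum_{j\in\mathbb{Z}}\Phi(t2^{2j})\,b_j,\qquad \Phi(\tau):=\tau^{k-s/2}e^{-c\tau}.
\]
Partitioning $(0,\infty)=\bigcup_m[2^{-2m},2^{-2m+2})$ and bounding $\Phi(t2^{2j})$ on each block by its supremum $\widetilde{\Phi}(j-m)$, the right-hand side becomes a discrete convolution $\sum_\ell\widetilde{\Phi}(\ell)\,b_{m+\ell}$. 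The decisive point is that $\widetilde{\Phi}\in\ell^1(\mathbb{Z})$: the decay as $\ell\to+\infty$ is super-exponential, while as $\ell\to-\infty$ one has $\widetilde{\Phi}(\ell)\sim 2^{2\ell(k-s/2)}$, which is summable \emph{precisely because $k>s/2$}. Young's inequality $\ell^1*\ell^q\hookrightarrow\ell^q$ then yields $\|F\|_{L^q(dt/t)}\lesssim\|\widetilde{\Phi}\|_{\ell^1}\|(b_j)\|_{\ell^q}=\|u\|_{\dot{B}^s_{p,q}}$; the case $q=\infty$ is identical with obvious modifications.

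For the reverse inequality, the idea is to invert the heat flow at the matched scale $t=2^{-2j}$. On the annulus $|\xi|\sim 2^j$ the symbol $(-2^{-2j}|\xi|^2)^k e^{-2^{-2j}|\xi|^2}$ of $(t\Delta)^k e^{t\Delta}$ at $t=2^{-2j}$ is bounded away from $0$ and $\infty$; dividing by it and rescaling $\xi\mapsto 2^{-j}\xi$ produces a fixed function $\psi\in\mathscr{D}(\R^n\setminus\{0\})$ such that $\dot{\Delta}_j u=\psi(2^{-j}D)\,\varphi(2^{-j}D)\,(2^{-2j}\Delta)^k e^{2^{-2j}\Delta}u$, where $\dot{\Delta}_j=\varphi(2^{-j}D)$ is the Littlewood--Paley profile. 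Since the multiplier norms $\|\psi(2^{-j}D)\|_{L^p\to L^p}$ and $\|\varphi(2^{-j}D)\|_{L^p\to L^p}$ are bounded uniformly in $j$ (scaling invariance plus Young's convolution inequality), this gives $b_j=2^{js}\|\dot{\Delta}_j u\|_p\lesssim 2^{js}\|(2^{-2j}\Delta)^k e^{2^{-2j}\Delta}u\|_p=F(2^{-2j})$. It then remains to pass from the sampled sequence to the integral: because $t\mapsto\|(t\Delta)^k e^{t\Delta}u\|_p$ varies slowly — for $t\le t'\le 2t$ one has $(t'\Delta)^k e^{t'\Delta}u=(t'/t)^k e^{(t'-t)\Delta}(t\Delta)^k e^{t\Delta}u$ and $e^{(t'-t)\Delta}$ is an $L^p$ contraction — one gets $F(t')\simeq F(2^{-2j})$ throughout $t'\in[2^{-2j-1},2^{-2j+1}]$, and summing these essentially disjoint dyadic blocks gives $\sum_j F(2^{-2j})^q\lesssim\int_0^\infty F(t)^q\,\frac{dt}{t}$, hence $\|u\|_{\dot{B}^s_{p,q}}\lesssim\|F\|_{L^q(dt/t)}$.

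The main obstacle is the bookkeeping that keeps every Fourier-multiplier bound uniform in the dyadic index $j$ — this is where the homogeneity of $\dot{B}^s_{p,r}$ enters — together with the verification that the convolution kernel $\Phi(\tau)=\tau^{k-s/2}e^{-c\tau}$ is integrable against $d\tau/\tau$ near $\tau=0$; the latter is exactly the hypothesis $k>s/2$, and it is precisely what prevents the low-frequency contributions from accumulating. The requirement $k\ge 0$ is needed only so that $(t\Delta)^k$ is meaningful as written. Combining the two displayed inequalities yields the claimed norm equivalence.
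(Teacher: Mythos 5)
Your proof is correct, and it is essentially the standard argument for this characterization; the paper does not prove the lemma itself but cites it from Bahouri--Chemin--Danchin, whose proof proceeds exactly along these lines (frequency-localized heat decay plus Young's inequality for one direction, inversion of the heat flow at matched dyadic times for the other). One small imprecision worth flagging: from the forward-time comparison $\|(t'\Delta)^k e^{t'\Delta}u\|_p \le (t'/t)^k\|(t\Delta)^k e^{t\Delta}u\|_p$ for $t\le t'\le 2t$ you only get the \emph{one-sided} bound $F(t')\lesssim F(t)$, not the two-sided equivalence $F(t')\simeq F(2^{-2j})$ on all of $[2^{-2j-1},2^{-2j+1}]$ that you assert; however, the one-sided inequality $F(2^{-2j})\lesssim F(t)$ for $t\in[2^{-2j-1},2^{-2j}]$ is all that is actually used to conclude $\sum_j F(2^{-2j})^q\lesssim\int_0^\infty F(t)^q\,\frac{dt}{t}$, so the argument stands.
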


A similar result holds if we replace $\Delta$ by the Lam\'{e} operator. Here the Lam\'{e} operator $\mathcal{L}$ is defined by
\begin{align}\label{Lame operator}
\mathcal{L}\vcentcolon=\mu\Delta+(\lambda+\mu)\nabla\divg
\end{align}
with
\begin{align}\label{Lame coefficients}
\mu>0,\ \ \mathrm{and}\ \ \nu\vcentcolon=\lambda+2\mu>0.
\end{align}
Let us introduce the Hodge operator $\mathcal{Q}=-\nabla(-\Delta)^{-1}\divg$ and let $\mathcal{P}=I-\mathcal{Q}$. The Lam\'{e} operator $\mathcal{L}$ and the Laplacian $\Delta$ can be expressed by each other, namely,
\begin{align}\label{from Laplacian to Lame}
\mathcal{L}=(\mu\mathcal{P}+\nu\mathcal{Q})\Delta=\Delta(\mu\mathcal{P}+\nu\mathcal{Q})
\end{align}
and
\begin{align}\label{from Lame to Laplacian}
\Delta=\left(\frac{1}{\mu}\mathcal{P}+\frac{1}{\nu}\mathcal{Q}\right)\mathcal{L}=\mathcal{L}\left(\frac{1}{\mu}\mathcal{P}+\frac{1}{\nu}\mathcal{Q}\right).
\end{align}
So, for every $p\in(1,\infty)$ and $k\in\mathbb{N}$, we have
\begin{align}\label{equivalence between Lame and Laplacian}
\|\mathcal{L}^ku\|_p\simeq\|\Delta^k u\|_p,\ \ u\in W^{2k,p}(\R^n;\R^n).
\end{align}

\begin{lemma}\label{characterization via heat kernel of Lame operator}
Suppose that $s\in\R$, $p\in(1,\infty)$ and $q\in[1,\infty]$. If $k>s/2$ and $k\ge0$, we have
\begin{align*}
\|u\|_{\dot{B}_{p,q}^{s,\mathcal{L}}}\vcentcolon=\left\|t^{-s/2}\|(t\mathcal{L})^ke^{t\mathcal{L}}u\|_p\right\|_{L^q(\R_+;\frac{dt}{t})}\simeq\|u\|_{\dot{B}_{p,q}^{s}},\ \ \forall u\in L^p(\R^n;\R^n).
\end{align*}
\end{lemma}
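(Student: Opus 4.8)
The plan is to reduce everything to Lemma~\ref{characterization via classic heat kernel} by splitting $u$ into its solenoidal and potential parts. The key observation is that, on the Fourier side, $\mathcal{L}$ has symbol $-(\mu|\xi|^2 I+(\lambda+\mu)\xi\xi^{\intercal})$, which is diagonalized by the complementary orthogonal projections $\widehat{\mathcal{Q}}(\xi)=\xi\xi^{\intercal}/|\xi|^2$ and $\widehat{\mathcal{P}}(\xi)=I-\widehat{\mathcal{Q}}(\xi)$, with respective eigenvalues $-\nu|\xi|^2$ and $-\mu|\xi|^2$. Consequently $e^{t\mathcal{L}}=\mathcal{P}e^{t\mu\Delta}+\mathcal{Q}e^{t\nu\Delta}$ on $L^p(\R^n;\R^n)$, where $\mathcal{P}=I-\mathcal{Q}$ and $\mathcal{Q}=-\nabla(-\Delta)^{-1}\divg$ are bounded on $L^p$ for $1<p<\infty$ (Mikhlin--H\"ormander), commute with one another, with $\Delta$, with $\mathcal{L}$ and with $e^{t\mathcal{L}}$, and satisfy $\mathcal{P}+\mathcal{Q}=I$, $\mathcal{P}\mathcal{Q}=0$, $\mathcal{P}^2=\mathcal{P}$, $\mathcal{Q}^2=\mathcal{Q}$. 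By the Littlewood--Paley characterization of $\dot{B}_{p,q}^{s}$ they are then also bounded on $\dot{B}_{p,q}^{s}$; and since $1<p<\infty$, any $u\in L^p$ lies in $\mathscr{S}_{h}^{'}$ (by Bernstein, $\|\dot{S}_ju\|_\infty\lesssim2^{jn/p}\|u\|_p\to0$ as $j\to-\infty$), so all the quantities below are meaningful.

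For the bound $\|u\|_{\dot{B}_{p,q}^{s,\mathcal{L}}}\lesssim\|u\|_{\dot{B}_{p,q}^{s}}$, I would use $\mathcal{P}\mathcal{Q}=0$, $\mathcal{L}\mathcal{P}=\mu\Delta\mathcal{P}$ and $\mathcal{L}\mathcal{Q}=\nu\Delta\mathcal{Q}$ to write
\[
(t\mathcal{L})^k e^{t\mathcal{L}}u=\big[(\tau\Delta)^k e^{\tau\Delta}\mathcal{P}u\big]_{\tau=\mu t}+\big[(\tau\Delta)^k e^{\tau\Delta}\mathcal{Q}u\big]_{\tau=\nu t},
\]
then take the $L^p$ norm in $x$, the triangle inequality in $L^q(\R_+;\tfrac{dt}{t})$, and perform the scale changes $\tau=\mu t$ and $\tau=\nu t$ (under which $\tfrac{dt}{t}$ is invariant and $t^{-s/2}$ picks up $\mu^{s/2}$, resp. $\nu^{s/2}$). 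This yields $\|u\|_{\dot{B}_{p,q}^{s,\mathcal{L}}}\le\mu^{s/2}\|\mathcal{P}u\|_{\dot{B}_{p,q}^{s,\Delta}}+\nu^{s/2}\|\mathcal{Q}u\|_{\dot{B}_{p,q}^{s,\Delta}}$, and since $k>s/2$, Lemma~\ref{characterization via classic heat kernel} applied to $\mathcal{P}u,\mathcal{Q}u\in\mathscr{S}_{h}^{'}$ followed by the Besov boundedness of $\mathcal{P},\mathcal{Q}$ bounds the right-hand side by $C\|u\|_{\dot{B}_{p,q}^{s}}$.

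For the reverse inequality I would start from $u=\mathcal{P}u+\mathcal{Q}u$ and apply Lemma~\ref{characterization via classic heat kernel} to each summand. Using \eqref{from Lame to Laplacian} in the form $\Delta\mathcal{P}=\mu^{-1}\mathcal{L}\mathcal{P}$, $\Delta\mathcal{Q}=\nu^{-1}\mathcal{L}\mathcal{Q}$ (so $e^{t\Delta}\mathcal{P}=e^{(t/\mu)\mathcal{L}}\mathcal{P}$ and $e^{t\Delta}\mathcal{Q}=e^{(t/\nu)\mathcal{L}}\mathcal{Q}$), the same scale change---now $\tau=t/\mu$, resp. $\tau=t/\nu$---gives $\|\mathcal{P}u\|_{\dot{B}_{p,q}^{s,\Delta}}=\mu^{-s/2}\|\mathcal{P}u\|_{\dot{B}_{p,q}^{s,\mathcal{L}}}$ and $\|\mathcal{Q}u\|_{\dot{B}_{p,q}^{s,\Delta}}=\nu^{-s/2}\|\mathcal{Q}u\|_{\dot{B}_{p,q}^{s,\mathcal{L}}}$. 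Because $\mathcal{P}$ commutes with $\mathcal{L}$ and with $e^{t\mathcal{L}}$ and is $L^p$-bounded, $\|(t\mathcal{L})^k e^{t\mathcal{L}}\mathcal{P}u\|_p=\|\mathcal{P}(t\mathcal{L})^k e^{t\mathcal{L}}u\|_p\lesssim\|(t\mathcal{L})^k e^{t\mathcal{L}}u\|_p$, hence $\|\mathcal{P}u\|_{\dot{B}_{p,q}^{s,\mathcal{L}}}\lesssim\|u\|_{\dot{B}_{p,q}^{s,\mathcal{L}}}$ and likewise for $\mathcal{Q}$; combining,
\[
\|u\|_{\dot{B}_{p,q}^{s}}\le\|\mathcal{P}u\|_{\dot{B}_{p,q}^{s}}+\|\mathcal{Q}u\|_{\dot{B}_{p,q}^{s}}\lesssim\|\mathcal{P}u\|_{\dot{B}_{p,q}^{s,\Delta}}+\|\mathcal{Q}u\|_{\dot{B}_{p,q}^{s,\Delta}}\lesssim\|u\|_{\dot{B}_{p,q}^{s,\mathcal{L}}}.
\]

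The only points that require genuine care are routine: rigorously justifying the identity $e^{t\mathcal{L}}=\mathcal{P}e^{t\mu\Delta}+\mathcal{Q}e^{t\nu\Delta}$ on $L^p$ (immediate for Schwartz data from the diagonalization of the symbol, then extended by density), and keeping track of the harmless constants $\mu^{\pm s/2},\nu^{\pm s/2}$; I do not expect any real obstacle. One could alternatively replace the $\mathcal{P}/\mathcal{Q}$ splitting by a direct use of \eqref{equivalence between Lame and Laplacian} together with \eqref{from Laplacian to Lame}--\eqref{from Lame to Laplacian}, but the decomposition above is the most transparent route.
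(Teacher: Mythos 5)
Your proposal is correct and follows essentially the same route as the paper: the paper's (one-line) proof likewise reduces to Lemma~\ref{characterization via classic heat kernel} via the Hodge splitting $u=\mathcal{P}u+\mathcal{Q}u$ and the identities $e^{t\mathcal{L}}=\mathcal{P}e^{\mu t\Delta}+\mathcal{Q}e^{\nu t\Delta}$, $e^{t\Delta}=\mathcal{P}e^{\mu^{-1}t\mathcal{L}}+\mathcal{Q}e^{\nu^{-1}t\mathcal{L}}$. Your write-up merely fills in the scaling substitutions and the $L^p$/Besov boundedness of $\mathcal{P},\mathcal{Q}$ that the paper leaves implicit (and, incidentally, you have the signs in the semigroup identities right, where the paper's displayed formulas contain a typo $e^{-t\mathcal{L}}$ for $e^{t\mathcal{L}}$).
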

\begin{proof}
The lemma is a consequence of Lemma \ref{characterization via classic heat kernel} along with the identities
\begin{align*}
e^{-t\mathcal{L}}=e^{\mu t\Delta}\mathcal{P}+e^{\nu t\Delta}\mathcal{Q}
\end{align*}
and
\begin{align*}
e^{t\Delta}=\mathcal{P}e^{-\mu^{-1}t\mathcal{L}}+\mathcal{Q}e^{-\nu^{-1}t\mathcal{L}}.
\end{align*}
\end{proof}

\bigskip

\section{Bounds of fundamental matrix}\label{heat kernel estimate section}

\bigskip

Let $\rho$ be a measurable function defined in $\R^n$ such that
\begin{align}\label{density bounds for Lame system}
m\le\rho(x)\le\frac{1}{m},\ \ \mathrm{a.e.}\ x\in\R^n
\end{align}
for some $m\in(0,1]$. Denote $b=\rho^{-1}$. The main results of this section, in the spirit of those in \cite{mcintosh 2000,duong DIE 1999}, are the Gaussian bounds of the matrix-valued heat kernel of $b\mathcal{L}$.

For notational convenience, we denote $L^2=L^2(\R^n;\R^n)$, $H^2=H^2(\R^n;\R^n)$. Let $\|\cdot\|$ be the $L^2$ norm induced by the standard $L^2$ inner product $\langle\cdot,\cdot\rangle$, and $\|\cdot\|_{\rho}$ the weighted norm induced by the inner product
\begin{align*}
\langle u,v\rangle_{\rho}=\int_{\R^n}u(x)\cdot v(x)\rho(x)\,dx.
\end{align*}

Roughly, our method is a classical PDE method, and we will study various weighted estimates for the solutions $u$ to the parabolic Lam\'{e} system
\begin{align}\label{Lame system}
\rho(x)\partial_tu-\mathcal{L}u=0,\ \ \mathrm{in}\ (0,\infty)\times\R^n.
\end{align}
Before studying the variable coefficient problem, let us point out a basic fact about the Lam\'{e} operator $\mathcal{L}$. The assumption \eqref{Lame coefficients} guarantees the ellipticity of $-\mathcal{L}$, and we have
\begin{align}\label{coercivity of Lame operator}
\|(-\mathcal{L})^{1/2}u\|^2=\langle-\mathcal{L}u,u\rangle=\mu\|\nabla u\|^2+(\mu+\lambda)\|\divg u\|^2\ge(\mu\wedge\nu)\|\nabla u\|^2
\end{align}
for all vectors $u\in H^2(\R^n;\R^n)$. 

\begin{lemma}\label{bL generates bounded analytic semigroup}
The operator $b\mathcal{L}:H^2\subset L^2\rightarrow L^2$ generates an analytic semigroup of contraction $\{e^{tb\mathcal{L}}\}_{t\ge0}$ on $(L^2,\langle\cdot,\cdot\rangle_\rho)$, and $e^{tb\mathcal{L}}b$ is self-adjoint on $(L^2,\langle\cdot,\cdot\rangle)$ for every $t\ge0$.
\end{lemma}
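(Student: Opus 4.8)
The plan is to identify $b\mathcal{L}$, with domain $H^2$, with a self-adjoint dissipative operator on the weighted Hilbert space $H_\rho\vcentcolon=(L^2,\langle\cdot,\cdot\rangle_\rho)$, and then to quote Theorem~\ref{generation theorem of selfadjoint operator}. Observe first that, by the two-sided bound \eqref{density bounds for Lame system}, $\|\cdot\|_\rho$ is equivalent to $\|\cdot\|$, so $H_\rho$ and $L^2$ agree as sets with equivalent norms and $H^2$ is dense in $H_\rho$. I shall also use the classical fact that the constant-coefficient operator $\mathcal{L}$ with domain $H^2$ is self-adjoint and nonpositive on $(L^2,\langle\cdot,\cdot\rangle)$: on the Fourier side it acts by the real symmetric matrix symbol $-\bigl(\mu|\xi|^2 I+(\lambda+\mu)\,\xi\otimes\xi\bigr)$, whose eigenvalues $\mu|\xi|^2$ and $\nu|\xi|^2$ are, by \eqref{Lame coefficients}, comparable to $|\xi|^2$; in particular $u\in L^2$ with $\mathcal{L}u\in L^2$ forces $u\in H^2$, which is what fixes the domain to be exactly $H^2$.

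Since $b\rho\equiv1$, for all $u,v\in H^2$ one has $\langle b\mathcal{L}u,v\rangle_\rho=\langle\mathcal{L}u,v\rangle=\langle u,\mathcal{L}v\rangle=\langle u,b\mathcal{L}v\rangle_\rho$, so $b\mathcal{L}$ is symmetric on $H_\rho$; and $\langle b\mathcal{L}u,u\rangle_\rho=\langle\mathcal{L}u,u\rangle=-\|(-\mathcal{L})^{1/2}u\|^2\le0$ by \eqref{coercivity of Lame operator}, so it is dissipative on $H_\rho$. To upgrade symmetry to self-adjointness I will verify the range condition $\mathrm{Ran}(I-b\mathcal{L})=L^2$. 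Writing $I-b\mathcal{L}=b(\rho-\mathcal{L})$ on $H^2$, note that $\rho-\mathcal{L}$ (bounded self-adjoint multiplication by $\rho$ plus $-\mathcal{L}$) is self-adjoint on $(L^2,\langle\cdot,\cdot\rangle)$ with domain $H^2$, and is coercive,
\[
\langle(\rho-\mathcal{L})u,u\rangle=\|u\|_\rho^2+\|(-\mathcal{L})^{1/2}u\|^2\ge m\|u\|^2,
\]
hence $\rho-\mathcal{L}\colon H^2\to L^2$ is a bijection; since multiplication by $b=\rho^{-1}$ is a bijection of $L^2$ onto itself (inverse: multiplication by $\rho$), the composition $I-b\mathcal{L}\colon H^2\to L^2$ is a bijection, in particular surjective.

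Now recall that a densely defined symmetric operator $\mathcal{A}$ on a real Hilbert space with $\mathrm{Ran}(I-\mathcal{A})=H$ is self-adjoint: given $y\in D(\mathcal{A}^*)$, pick $x\in D(\mathcal{A})$ with $(I-\mathcal{A})x=(I-\mathcal{A}^*)y$; then $(I-\mathcal{A}^*)(y-x)=0$ (using $\mathcal{A}\subseteq\mathcal{A}^*$), and $I-\mathcal{A}^*$ is injective because $\ker(I-\mathcal{A}^*)=\mathrm{Ran}(I-\mathcal{A})^\perp=\{0\}$, so $y=x\in D(\mathcal{A})$. Applied to $\mathcal{A}=b\mathcal{L}$ on $H_\rho$, this makes $b\mathcal{L}$ self-adjoint on $H_\rho$; being dissipative, it generates, by Theorem~\ref{generation theorem of selfadjoint operator}, an analytic semigroup of contractions $\{e^{tb\mathcal{L}}\}_{t\ge0}$ on $H_\rho$, with each $e^{tb\mathcal{L}}$ self-adjoint on $H_\rho$. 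Finally, for $\phi,\psi\in L^2$,
\[
\langle e^{tb\mathcal{L}}b\phi,\psi\rangle=\langle e^{tb\mathcal{L}}b\phi,b\psi\rangle_\rho=\langle b\phi,e^{tb\mathcal{L}}b\psi\rangle_\rho=\langle\phi,e^{tb\mathcal{L}}b\psi\rangle,
\]
so the bounded operator $e^{tb\mathcal{L}}b$ is symmetric, hence self-adjoint, on $(L^2,\langle\cdot,\cdot\rangle)$.

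The crux is the self-adjointness: symmetry together with dissipativity is \emph{not} enough, so one genuinely has to establish the range condition, which here rests on the coercivity \eqref{coercivity of Lame operator} of the constant-coefficient Lamé operator together with the elliptic regularity $\mathcal{L}u\in L^2\Rightarrow u\in H^2$; the weighted inner product then adds nothing essential, since $\rho$ and $b$ are bounded and bounded below. (Alternatively, one could bypass the range computation by realizing $-b\mathcal{L}$ as the self-adjoint operator on $H_\rho$ associated, via the first representation theorem, with the closed nonnegative form $(u,v)\mapsto\mu\langle\nabla u,\nabla v\rangle+(\lambda+\mu)\langle\divg u,\divg v\rangle$ on $H^1$, again identifying the associated operator domain with $H^2$ through elliptic regularity.)
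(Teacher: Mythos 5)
Your proof is correct and takes essentially the same route as the paper's: realize $b\mathcal{L}$ as a self-adjoint dissipative operator on $(L^2,\langle\cdot,\cdot\rangle_\rho)$, invoke Theorem~\ref{generation theorem of selfadjoint operator}, and then obtain the self-adjointness of $e^{tb\mathcal{L}}b$ on the unweighted $L^2$ by exactly the duality computation the paper uses. The only difference is that you carefully justify the self-adjointness of $b\mathcal{L}$ on the weighted space (symmetry plus the range condition $\mathrm{Ran}(I-b\mathcal{L})=L^2$ via coercivity of $\rho-\mathcal{L}$ and elliptic regularity), a step the paper simply declares ``readily verified,'' so your write-up supplies a legitimate missing detail rather than a new method.
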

\begin{proof}
First, it is readily to verify that $b\mathcal{L}$ is a self-adjoint operator on $(L^2,\langle\cdot,\cdot\rangle_{\rho})$. In view of \eqref{coercivity of Lame operator}, we have $\langle b\mathcal{L}u,u\rangle_{\rho}\le0$. So by Theorem \ref{generation theorem of selfadjoint operator}, $b\mathcal{L}$ generates an analytic semigroup of contraction $\{e^{tb\mathcal{L}}\}_{t\ge0}$ on $(L^2,\langle\cdot,\cdot\rangle_{\rho})$. Since $e^{tb\mathcal{L}}$ is self-adjoint on $(L^2,\langle\cdot,\cdot\rangle_{\rho})$, we have for all $u,v\in L^2$ that
\begin{align*}
\langle e^{tb\mathcal{L}}bu,v\rangle=\langle e^{tb\mathcal{L}}bu,bv\rangle_{\rho}=\langle bu,e^{tb\mathcal{L}}bv\rangle_{\rho}=\langle u,e^{tb\mathcal{L}}bv\rangle.
\end{align*}
This means that $e^{tb\mathcal{L}}b$ is self-adjoint on $(L^2,\langle\cdot,\cdot\rangle)$.
\end{proof}

\begin{lemma}
Let $n\in\{2,3\}$. For every $t>0$, the bounded operator $e^{tb\mathcal{L}}$ on $L^2$ admits a Schwartz kernel, denoted by $K_t(x,y)$, which is bounded and satisfies the pointwise bound
\begin{align*}
|K_t(x,y)|\le\frac{C}{t^{n/2}}
\end{align*}
for some constant $C=C(m,\mu,\lambda)$.
\end{lemma}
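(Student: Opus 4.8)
The plan is to obtain the $L^2 \to L^\infty$ bound for $e^{tb\mathcal{L}}$ via Nash--Moser--Davies-type arguments, since the existence of a bounded Schwartz kernel then follows by duality and the semigroup property. The starting point is the energy identity for \eqref{Lame system}: if $u(t) = e^{tb\mathcal{L}}u_0$ then, differentiating $\|u(t)\|_\rho^2$ and using \eqref{coercivity of Lame operator}, one gets
\begin{align*}
\frac{1}{2}\frac{d}{dt}\|u(t)\|_\rho^2 = \langle b\mathcal{L}u(t), u(t)\rangle_\rho = \langle \mathcal{L}u(t), u(t)\rangle \le -(\mu\wedge\nu)\|\nabla u(t)\|^2.
\end{align*}
Since the density weights are comparable to the unweighted norm by \eqref{density bounds for Lame system}, this is an honest dissipation estimate. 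To upgrade from $L^2$ to $L^\infty$ I would use Davies' trick from \cite{davies jfa 1995}: interpolate between the $L^2$ decay and the Sobolev embedding $\dot{H}^1 \hookrightarrow L^{2n/(n-2)}$ (valid precisely for $n\le 3$ when one wants $2n/(n-2) > 2$ to be finite, i.e. $n=3$; for $n=2$ one uses $\dot{H}^1 \hookrightarrow L^q$ for all $q<\infty$ plus a logarithmic-type or direct Nash interpolation). Feeding the Sobolev inequality into the differential inequality for $\|u(t)\|_p^p$ and running the standard Moser iteration on the exponents $p_k = 2\cdot(n/(n-2))^k$ (suitably modified for $n=2$) yields $\|u(t)\|_\infty \le C t^{-n/4}\|u_0\|_2$, hence $\|e^{tb\mathcal{L}}\|_{L^2\to L^\infty}\le C t^{-n/4}$; here $C$ depends only on $m$ (through the weight bounds) and on $\mu, \lambda$ (through $\mu\wedge\nu$ and the algebraic coefficients).

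The second ingredient is the dual bound. Because $e^{tb\mathcal{L}}b$ is self-adjoint on $(L^2,\langle\cdot,\cdot\rangle)$ by Lemma \ref{bL generates bounded analytic semigroup}, the adjoint of $e^{tb\mathcal{L}}$ with respect to the unweighted pairing is $b\,e^{tb\mathcal{L}}\,b^{-1} = b\, e^{tb\mathcal{L}}\,\rho$ — equivalently, the operator $e^{tb\mathcal{L}}b$ being self-adjoint lets me transfer the $L^2\to L^\infty$ estimate into an $L^1\to L^2$ estimate for the relevant operator (using that $b,\rho$ are bounded above and below, multiplication by them is bounded and invertible on every $L^p$ with norms controlled by $m$). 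Concretely, $\|e^{tb\mathcal{L}}\|_{L^1\to L^2} = \|(e^{tb\mathcal{L}})^*\|_{L^2\to L^\infty} \lesssim_m \|e^{tb\mathcal{L}}b\|_{L^2\to L^\infty}\cdot\|b^{-1}\|_{L^2\to L^2}$ type manipulation, and $e^{tb\mathcal{L}}b$ is controlled the same way as $e^{tb\mathcal{L}}$ since $b$ is a bounded multiplier; so $\|e^{tb\mathcal{L}}\|_{L^1\to L^2}\le C t^{-n/4}$ as well. Then the semigroup law $e^{tb\mathcal{L}} = e^{(t/2)b\mathcal{L}}e^{(t/2)b\mathcal{L}}$ gives $\|e^{tb\mathcal{L}}\|_{L^1\to L^\infty}\le \|e^{(t/2)b\mathcal{L}}\|_{L^2\to L^\infty}\|e^{(t/2)b\mathcal{L}}\|_{L^1\to L^2}\le C t^{-n/2}$. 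By the Dunford--Pettis / Schwartz kernel theorem, a bounded operator from $L^1$ to $L^\infty$ is exactly an integral operator with kernel $K_t\in L^\infty(\R^n\times\R^n)$, and $\|K_t\|_\infty = \|e^{tb\mathcal{L}}\|_{L^1\to L^\infty}\le C t^{-n/2}$, which is the claimed pointwise bound $|K_t(x,y)|\le C t^{-n/2}$ (interpreting $K_t(x,y)$ as a matrix-valued kernel with the matrix norm comparable to the operator-to-kernel correspondence componentwise).

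I expect the main obstacle to be making the Moser iteration rigorous for this \emph{coupled system} rather than a scalar equation: the coercivity \eqref{coercivity of Lame operator} controls $\|\nabla u\|^2$ (the full gradient of the vector field), but the nonlinear test functions $|u|^{p-2}u$ used in the iteration interact with the vector structure, and one must check that $\langle \mathcal{L}u, |u|^{p-2}u\rangle$ still produces a nonnegative term of the form $c_p\|\nabla(|u|^{p/2})\|^2$ (or at least controls it). For the Laplacian part $\mu\Delta$ this is the classical Kato-type inequality; the delicate piece is the $(\lambda+\mu)\nabla\divg$ term, where one integrates by parts to get $(\lambda+\mu)\int (\divg u)\,\divg(|u|^{p-2}u)\,dx$ and must bound the cross terms $\int (\divg u)\, |u|^{p-2}\partial_j u_j$ and the ones involving $\nabla|u|$ — this requires either the sign condition $\mu+\lambda\ge 0$ together with Cauchy--Schwarz absorbing into the $\mu\|\nabla(|u|^{p/2})\|^2$ term, or a more careful componentwise argument. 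A secondary technical point is the $n=2$ case, where $\dot H^1$ does not embed into $L^\infty$ and one must use Gagliardo--Nirenberg $\|u\|_q \lesssim \|u\|_2^{2/q}\|\nabla u\|_2^{1-2/q}$ with $q\to\infty$ along the iteration, tracking that the constants $C(m,\mu,\lambda)$ stay finite; this is standard but needs to be stated. I would handle the low-regularity density $\rho$ throughout simply by noting that it enters only through the two-sided bound \eqref{density bounds for Lame system}, so no smoothness of $\rho$ is ever used.
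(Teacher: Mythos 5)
Your second half (duality via the self-adjointness of $e^{tb\mathcal{L}}b$, the semigroup factorization $L^1\to L^2\to L^\infty$, and Dunford--Pettis to produce a bounded kernel) matches the paper. The gap is in the first half: you propose to get $\|e^{tb\mathcal{L}}\|_{L^2\to L^\infty}\lesssim t^{-n/4}$ by Nash--Moser iteration with test functions $|u|^{p-2}u$, and you yourself flag the decisive obstacle without resolving it. Under the paper's standing hypotheses \eqref{Lame coefficients} one only has $\mu>0$ and $\nu=\lambda+2\mu>0$, so $\mu+\lambda$ may be negative; the coercivity \eqref{coercivity of Lame operator} is an \emph{integrated} (Fourier/Plancherel) statement using $\|\divg u\|\le\|\nabla u\|$ in $L^2$, and it does not survive the insertion of the weight $|u|^{p-2}$: pointwise one only has $(\divg u)^2\le n|\nabla u|^2$, which is not enough to absorb a negative $(\mu+\lambda)\int|u|^{p-2}(\divg u)^2$ into $\mu\int|u|^{p-2}|\nabla u|^2$, and the cross terms involving $\nabla|u|$ are exactly where De Giorgi--Nash--Moser theory breaks down for coupled systems. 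So the fix you hint at ($\mu+\lambda\ge0$) is not available, and ``a more careful componentwise argument'' is precisely the missing idea; as the introduction of the paper notes, fundamental matrices of such systems need not obey these bounds in general, so this step cannot be waved through. Your reading of the dimension restriction is also off: $\dot H^1\hookrightarrow L^{2n/(n-2)}$ holds for all $n\ge3$, so it does not explain why $n\le3$ is needed.

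The paper's route avoids nonlinear test functions entirely and is where $n\le 3$ genuinely enters: since $b\mathcal{L}$ generates a bounded analytic semigroup on $L^2$ (Lemma \ref{bL generates bounded analytic semigroup}), one has $\|\mathcal{L}e^{tb\mathcal{L}}u_0\|\lesssim t^{-1}\|u_0\|$, and the Gagliardo--Nirenberg inequality \eqref{GN inequality interpolate L infty}, $\|u\|_\infty\le C\|u\|^{1-n/4}\|\mathcal{L}u\|^{n/4}$ (valid for $n\in\{2,3\}$, using \eqref{equivalence between Lame and Laplacian}), immediately gives $\|e^{tb\mathcal{L}}u_0\|_\infty\le Ct^{-n/4}\|u_0\|$ with only linear, $L^2$-based estimates, so the vector structure never causes trouble. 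If you replace your iteration step by this analyticity-plus-interpolation argument, the rest of your proposal goes through as written.
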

\begin{proof}
Since $n\in\{2,3\}$, we get from the Gagliardo-Nirenberg inequality and \eqref{equivalence between Lame and Laplacian} that
\begin{align}\label{GN inequality interpolate L infty}
\|u\|_{\infty}\le C\|u\|^{1-n/4}\|\mathcal{L}u\|^{n/4}\ \ u\in H^2.
\end{align}
This along with the analyticity of $e^{tb\mathcal{L}}$ implies that
\begin{align*}
\|e^{tb\mathcal{L}}u_0\|_{\infty}\le Ct^{-n/4}\|u_0\|,\ \ u_0\in L^2.
\end{align*}
Then $e^{tb\mathcal{L}}$ is also bounded from $L^1$ to $L^2$ due to the self-adjointness of $e^{tb\mathcal{L}}b$, and from $L^1$ to $L^\infty$ due to the semigroup property. So the Schwartz kernel $K_t(x,y)$ of $e^{tb\mathcal{L}}$ is indeed bounded and satisfies the desired bound. This completes the proof.
\end{proof}

Next, we adopt the well-known Davies perturbation method (see \cite{davies ajm 1987}) to show Gaussian bounds for the kernel $S_t(x,y)\vcentcolon=K_t(x,y)b(y)$. 

The main theorem in this section is the following:
\begin{theorem}\label{main theorem of heat kernel estimates}
Let $n\in\{2,3\}$. For any $\gamma\in(0,1)$, each entry of $S_t(x,y)$ is a $C^{1,\gamma}$ function in both $x$ and $y$. More precisely, there exist constants $C_1=C_1(m,\mu,\lambda)$ and $C_2=C_2(m,\mu,\lambda,\gamma)$ such that for all $t>0$ and $x,y,h\in\R^n$,
\begin{align}
|S_t(x,y)|+\sqrt{t}|\nabla_x S_t(x,y)|\le\frac{C_1}{t^{n/2}}\exp\left\{-\frac{|x-y|^2}{C_1t}\right\},
\end{align}
\begin{align}\label{Holder estimate for gradient 1}
|\nabla_x S_t(x+h,y)-\nabla_xS_t(x,y)|\le\left(\frac{|h|}{\sqrt{t}}\right)^{\gamma}\frac{C_2}{t^{(n+1)/2}}\exp\left\{-\frac{|x-y|^2}{C_2t}\right\},
\end{align}
and
\begin{align}\label{Holder estimate for gradient 2}
|\nabla_x S_t(x,y+h)-\nabla_xS_t(x,y)|\le\left(\frac{|h|}{\sqrt{t}}\right)^{\gamma}\frac{C_2}{t^{(n+1)/2}}\exp\left\{-\frac{|x-y|^2}{C_2t}\right\}
\end{align}
provided $2|h|\le\sqrt{t}$.
\end{theorem}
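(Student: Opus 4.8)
The strategy is the classical Davies method combined with Sobolev/Gagliardo–Nirenberg estimates, exactly as foreshadowed in the introduction. The starting point is the on-diagonal bound $\|K_t(x,y)\|\le Ct^{-n/2}$ already established. The first step is to upgrade this to a Gaussian \emph{off-diagonal} bound for $S_t(x,y)=K_t(x,y)b(y)$. To do this I would fix a bounded Lipschitz function $\phi$ on $\R^n$ with $\|\nabla\phi\|_\infty\le1$, introduce the twisted semigroup $\mathcal{T}_\alpha(t):=e^{\alpha\phi}e^{tb\mathcal{L}}e^{-\alpha\phi}$ for $\alpha\in\R$, and show it is generated by the perturbed operator $b\mathcal{L}_\alpha$ where $\mathcal{L}_\alpha=e^{\alpha\phi}\mathcal{L}e^{-\alpha\phi}$. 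Testing the equation $\rho\,\partial_t v=\mathcal{L}_\alpha v$ against $v$ in the weighted inner product and using the coercivity estimate \eqref{coercivity of Lame operator} together with the explicit form of $\mathcal{L}$ — the commutator terms produced by conjugation are first and zeroth order with coefficients controlled by $\alpha\|\nabla\phi\|_\infty$ — one gets a differential inequality $\frac{d}{dt}\|v\|_\rho^2\le C\alpha^2\|v\|_\rho^2$ after absorbing the gradient term into the coercive part (this is where $\mu\wedge\nu>0$ is used). Hence $\|\mathcal{T}_\alpha(t)\|_{2\to2}\le e^{C\alpha^2 t}$, and by the same Gagliardo–Nirenberg argument as in the on-diagonal lemma (Sobolev embedding is available precisely because $n\le 3$), $\|\mathcal{T}_\alpha(t)\|_{1\to\infty}\le Ct^{-n/2}e^{C\alpha^2 t}$. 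Unwrapping the conjugation yields $|S_t(x,y)|\le Ct^{-n/2}\exp(\alpha(\phi(x)-\phi(y))+C\alpha^2 t)$; choosing $\phi$ to approximate $x\mapsto |x-y_0|$ and optimizing $\alpha=|x-y|/(2Ct)$ gives the Gaussian bound $|S_t(x,y)|\le C t^{-n/2}\exp(-|x-y|^2/(Ct))$.

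The second step is the gradient bound $\sqrt t\,|\nabla_x S_t(x,y)|\le Ct^{-n/2}\exp(-|x-y|^2/(Ct))$. The clean way is to use the semigroup/analyticity structure: write $e^{tb\mathcal{L}}=e^{(t/2)b\mathcal{L}}e^{(t/2)b\mathcal{L}}$, so $\nabla_x S_t(x,y)=\int \nabla_x K_{t/2}(x,z)\,K_{t/2}(z,y)b(y)\,dz$ up to the weight, and it therefore suffices to have the one-point estimate $\|\nabla e^{tb\mathcal{L}}u_0\|_\infty\le Ct^{-(n+1)/4}\|u_0\|_2$. This again follows from a Gagliardo–Nirenberg inequality of the form $\|\nabla u\|_\infty\le C\|u\|^{\theta}\|\mathcal{L}u\|^{1-\theta}$ valid for $n\le3$ (interpolating $W^{2,2}\hookrightarrow W^{1,\infty}$ in $n\le3$ via \eqref{equivalence between Lame and Laplacian}) together with analyticity, i.e. $\|b\mathcal{L}e^{tb\mathcal{L}}u_0\|\le Ct^{-1}\|u_0\|$. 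To get the \emph{Gaussian} decay of the gradient one repeats the twisted-semigroup computation of the first step but with $\|\nabla e^{tb\mathcal{L}_\alpha}u_0\|_\infty$ in place of $\|e^{tb\mathcal{L}_\alpha}u_0\|_\infty$; the $e^{C\alpha^2 t}$ factor is unchanged and the same optimization in $\alpha$ produces the exponential. (Symmetry in $x\leftrightarrow y$ for the non-weighted kernel $K_t$ lets one transfer $x$-derivatives to $y$-derivatives when convenient; the extra factor $b(y)$ is harmless since $b\in L^\infty$ with a positive lower bound.)

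The third step is the Hölder continuity \eqref{Holder estimate for gradient 1}–\eqref{Holder estimate for gradient 2}. Here I would interpolate between the gradient bound just obtained and a \emph{second-derivative} bound. Using $e^{tb\mathcal{L}}=e^{(t/3)b\mathcal{L}}e^{(t/3)b\mathcal{L}}e^{(t/3)b\mathcal{L}}$ and the estimate $\|\nabla^2 e^{tb\mathcal{L}}u_0\|_\infty\le Ct^{-(n+2)/4}\|u_0\|_2$ — obtained from yet another Gagliardo–Nirenberg inequality, which is the borderline case forcing $n\le3$ — one gets $|\nabla_x^2 S_t(x,y)|\le Ct^{-(n+2)/2}\exp(-|x-y|^2/(Ct))$, again with Gaussian decay via the Davies perturbation. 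Then for $2|h|\le\sqrt t$ one writes $\nabla_x S_t(x+h,y)-\nabla_x S_t(x,y)$ as an integral of $\nabla_x^2 S_t$ along the segment, bounds it two ways (by $|h|$ times the $\nabla^2$ bound, and by $2$ times the $\nabla$ bound), and takes the geometric mean with weights $\gamma$ and $1-\gamma$; the condition $2|h|\le\sqrt t$ guarantees that the points on the segment stay within distance $\sqrt t$ of $x$, so the Gaussian factor $\exp(-|x-y|^2/(Ct))$ is preserved up to enlarging the constant. The $y$-Hölder estimate \eqref{Holder estimate for gradient 2} follows in the same way, either by an analogous mixed $\nabla_x\nabla_y$ bound or by exploiting the $x\leftrightarrow y$ symmetry of $K_t$ together with the Lipschitz (indeed smooth-in-the-semigroup-sense) dependence on the weight $b$.

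\textbf{Main obstacle.} The delicate point is the twisted-semigroup energy estimate $\|e^{tb\mathcal{L}_\alpha}\|_{2\to2}\le e^{C\alpha^2 t}$ for the \emph{system} $\mathcal{L}$ rather than the scalar Laplacian: after conjugation the lower-order terms couple the components, and one must check that the antisymmetric (first-order) part contributes nothing to $\frac{d}{dt}\|v\|_\rho^2$ while the symmetric first-order part can be absorbed into $\mu\|\nabla v\|^2+(\mu+\lambda)\|\divg v\|^2\ge(\mu\wedge\nu)\|\nabla v\|^2$ by Young's inequality, at the cost of the $C\alpha^2$ constant. Everything else is a bookkeeping exercise in Gagliardo–Nirenberg interpolation, which is exactly why the dimensional restriction $n\in\{2,3\}$ appears — the embedding $W^{2,2}\hookrightarrow W^{1,\infty}$ (and its higher analogue for $\nabla^2$) fails for $n\ge4$.
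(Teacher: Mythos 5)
Your Step~1 matches the paper (Lemma \ref{first step Gaussian upper bound}): Davies conjugation, weighted energy estimate via \eqref{coercivity of purterbation Lame operator}, the $L^2\to L^\infty$ bound from \eqref{GN inequality interpolate L infty}, duality and the semigroup property to reach $L^1\to L^\infty$, then optimization in $\psi$ and $\alpha$. The remaining two steps, however, each contain a gap that is not merely bookkeeping.

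\textbf{Step 2 is built on a false Gagliardo--Nirenberg inequality.} You invoke $\|\nabla u\|_\infty \le C\|u\|_2^\theta\|\mathcal L u\|_2^{1-\theta}$, attributing it to the embedding $W^{2,2}\hookrightarrow W^{1,\infty}$ ``valid for $n\le 3$''. That embedding requires $2-n/2>1$, i.e.\ $n<2$; correspondingly, the Gagliardo--Nirenberg exponent you would need here is $a=(n+2)/4$, which equals $1$ at $n=2$ (forbidden borderline) and exceeds $1$ at $n=3$. Since the coefficients are merely $L^\infty$, there is no $H^3$ or $W^{2,q}$, $q>2$, regularity for $e^{tb\mathcal L}u_0$ to compensate. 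The paper circumvents this entirely: it first upgrades the twisted semigroup to an $L^1\to L^\infty$ bound $\|v(t)\|_\infty\lesssim t^{-n/2}e^{C\alpha^2 t}\|u_0\|_1$, proves the separate $L^1\to L^\infty$ bound \eqref{L1 to Linfty estimate for time derivative} for $\partial_t v$, and then interpolates purely in $L^\infty$ via \eqref{GN inequality via Littlewood Paley}, $\|\nabla v\|_\infty\le C\|v\|_\infty^{1/2}\|\mathcal L v\|_\infty^{1/2}$, which is a Bernstein/Littlewood--Paley inequality valid in every dimension. This is the step your plan is missing.

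\textbf{Step 3 assumes a second derivative bound that does not exist.} You propose to interpolate between $\|\nabla_x S_t\|_\infty$ and a bound $\|\nabla^2_x S_t(\cdot,y)\|_\infty\lesssim t^{-(n+2)/2}\exp(\cdot)$. Two objections. First, the coefficient $b$ is only bounded measurable, so one cannot differentiate the equation $\rho\partial_t u=\mathcal L u$ in $x$ to gain another spatial derivative; the kernel is $C^{1,\gamma}$ but not $C^2$. Second, even for the constant-coefficient piece, $\|\nabla^2 u\|_\infty\le C\|\mathcal L u\|_\infty$ is the false endpoint of Calder\'on--Zygmund. The paper instead bounds $\|\nabla u\|_{\dot C^\gamma}\le C\|\nabla^2 u\|_q\le C\|\mathcal L u\|_q$ with $q=n/(1-\gamma)<\infty$ (Morrey embedding $\dot W^{1,q}\hookrightarrow\dot C^\gamma$ plus Calder\'on--Zygmund in $L^q$), and interpolates $\|\mathcal L u\|_q\le\|\mathcal L u\|_2^\theta\|\mathcal L u\|_\infty^{1-\theta}$. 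The Gaussian decay in \eqref{Holder estimate for gradient 1} is then reinstated by the elementary interpolation $|f(x+h)-f(x)|\le(|f(x+h)|+|f(x)|)^{1-\beta}|f(x+h)-f(x)|^\beta$ against Lemma \ref{second step gradient estimate}, rather than by re-running Davies on a second-derivative quantity.

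\textbf{The $y$-H\"older estimate does not follow from symmetry.} You suggest obtaining \eqref{Holder estimate for gradient 2} ``by exploiting the $x\leftrightarrow y$ symmetry of $K_t$''. But $K_t$ itself is not symmetric; only $S_t(x,y)=S_t^\intercal(y,x)$ holds, and that transpose symmetry converts $x$-H\"older continuity of $\nabla_x S_t$ into $y$-H\"older continuity of $\nabla_y S_t$, not into the mixed statement \eqref{Holder estimate for gradient 2} concerning $y$-H\"older continuity of $\nabla_x S_t$. The paper gives a genuinely separate argument for this: it writes
$\int[\nabla_x S_t(x,y+h)-\nabla_x S_t(x,y)]u_0(y)\,dy=\nabla e^{tb\mathcal L}(b\,\delta_h u_0)$
with $\delta_h u_0(x)=u_0(x-h)-u_0(x)$, then applies Lemma \ref{second step gradient estimate} at time $t/2$ twice together with the $L^1\to L^\infty$ gradient bound, and finishes with the same interpolation trick to recover the Gaussian factor.

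In short, the Davies perturbation scaffolding in your plan is exactly the paper's, but the Sobolev/interpolation ingredients chosen in Steps~2 and 3 are unavailable in dimensions $2$ and $3$; the paper's replacements ($L^1\to L^\infty$ bounds plus $L^\infty$ Littlewood--Paley interpolation for the gradient, and Morrey embedding in finite $L^q$ for the H\"older modulus) are where the substance of the proof lies.
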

\begin{remark}
In view of Lemma \ref{bL generates bounded analytic semigroup}, we have $S_t(x,y)=S_t^{\intercal}(y,x)$. So the $y$-derivative also satisfies each of the bounds.
\end{remark}

Let $\mathscr{W}$ denote the set of all bounded real-valued smooth functions $\psi$ on $\R^n$ such that $\|\nabla\psi\|_{\infty}\le1$ and $\|\nabla^2\psi\|_{\infty}\le1$. Let $d(x,y)\vcentcolon=\sup\{\psi(x)-\psi(y)|\psi\in\mathscr{W}\}$.

\begin{lemma}[see {\cite[Lemma~4]{davies jfa 1995}}]\label{equivalence of metric}
There exists a positive constant $C=C(n)$ such that
\begin{align*}
C^{-1}|x-y|\le d(x,y)\le C|x-y|
\end{align*}
for all $x,y\in\R^n$.
\end{lemma}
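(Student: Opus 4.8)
\textbf{Proof plan for Lemma \ref{equivalence of metric}.}

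The statement to prove is the two-sided equivalence $C^{-1}|x-y| \le d(x,y) \le C|x-y|$, where $d(x,y) = \sup\{\psi(x) - \psi(y) : \psi \in \mathscr{W}\}$ and $\mathscr{W}$ consists of bounded smooth $\psi$ with $\|\nabla\psi\|_\infty \le 1$ and $\|\nabla^2\psi\|_\infty \le 1$. The plan is to handle the two inequalities separately, the upper bound being immediate and the lower bound requiring an explicit construction.

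For the \emph{upper bound} $d(x,y) \le C|x-y|$: any $\psi \in \mathscr{W}$ satisfies $\|\nabla\psi\|_\infty \le 1$, so by the mean value inequality $|\psi(x) - \psi(y)| \le |x-y|$ for all $x,y$. Taking the supremum over $\psi \in \mathscr{W}$ gives $d(x,y) \le |x-y|$, so in fact $C = 1$ works here. (The only subtlety is that $\mathscr{W}$ is nonempty and the sup is genuinely finite, but the bound $|x-y|$ is uniform in $\psi$, so this is automatic.)

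For the \emph{lower bound} $C^{-1}|x-y| \le d(x,y)$: fix distinct points $x_0, y_0$ and set $r = |x_0 - y_0|$. I would construct a single test function $\psi \in \mathscr{W}$ with $\psi(x_0) - \psi(y_0) \ge c\, r$ for an absolute constant $c$. The natural candidate is a smoothed, truncated version of the linear function $x \mapsto \frac{1}{2} e\cdot x$, where $e = (x_0 - y_0)/r$ is the unit vector pointing from $y_0$ to $x_0$. Concretely, take a smooth odd cutoff $\theta:\R \to \R$ with $\theta(s) = s$ for $|s| \le 1$, $\theta$ bounded, $\|\theta'\|_\infty \le 1$, and $\|\theta''\|_\infty \le c_0$ for some universal $c_0$; then set $\psi(z) = \frac{r}{2}\,\theta\!\big(\tfrac{1}{r}\, e\cdot (z - \tfrac{x_0+y_0}{2})\big)$. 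This $\psi$ is bounded and smooth; its gradient is $\frac{1}{2}\theta'(\cdots)\, e$, so $\|\nabla\psi\|_\infty \le \frac12 \le 1$; its Hessian is $\frac{1}{2r}\theta''(\cdots)\, e\otimes e$, so $\|\nabla^2\psi\|_\infty \le \frac{c_0}{2r}$. For the Hessian bound $\le 1$ one needs $r \ge c_0/2$; to cover all scales, rescale by replacing $\psi$ with $\lambda\psi(\cdot/\lambda)$ (which preserves $\|\nabla\psi\|_\infty$ and scales $\|\nabla^2\psi\|_\infty$ by $1/\lambda$) or, more simply, absorb a factor into the constant by taking $\psi(z) = \min(r,1)\cdot\big(\text{rescaled profile}\big)$ — but the cleanest route is: the class $\mathscr{W}$ is scale-covariant enough that it suffices to prove the lower bound for $r \le 1$, handling $r \ge 1$ by a covering/additivity argument (chain points along the segment at unit spacing and use $d(x,z) + d(z,y) \ge d(x,y)$? no — $d$ is not obviously a metric from below, so instead just note that for $r \ge 1$ one can directly use the cutoff at scale $1$ to gain $\psi(x_0)-\psi(y_0) \gtrsim 1 \ge r/r$... ). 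The honest core computation is: with the profile above valid once $r$ is bounded below, $\psi(x_0) - \psi(y_0) = \frac{r}{2}[\theta(\tfrac12) - \theta(-\tfrac12)] = \frac{r}{2}$, giving $d(x_0,y_0) \ge r/2$.

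The \textbf{main obstacle} is the Hessian constraint $\|\nabla^2\psi\|_\infty \le 1$, which prevents $\psi$ from transitioning too sharply and thus limits how much of a linear "ramp" one can fit near a given pair of points; resolving the small-$r$ versus large-$r$ dichotomy cleanly is where care is needed. For $r \ge 1$, I would use a test function that rises linearly with slope $\le 1$ over a region of size $\sim r$ — since the ramp is long, the required curvature at its ends is $O(1/r) \le O(1)$, so this is the easy regime and directly gives $\psi(x_0) - \psi(y_0) \gtrsim r$. For $r \le 1$, the linear profile of slope $\tfrac12$ over a window of width $r$ would force Hessian $\sim 1/r \gg 1$; the fix is to instead use a profile of \emph{smaller} slope $\sim r$ (so slope goes to $0$ with $r$) over a fixed unit window, which yields $\psi(x_0) - \psi(y_0) \sim r \cdot r$... that loses a factor. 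The correct resolution, which I would carry out, is to observe that $\mathscr{W}$ is invariant under $\psi \mapsto \psi(\lambda\,\cdot)/\lambda$ only for $\lambda \ge 1$ (this keeps both seminorms $\le 1$), and under this we may always reduce to $r = 1$ after rescaling space by $\lambda = 1/r \ge 1$ when $r \le 1$; on the rescaled picture the points are at unit distance and the explicit profile above with slope $\tfrac12$ works with Hessian $\le c_0/2 \le 1$ (shrinking the slope by the fixed factor $1/c_0$ if necessary), and scaling back gives $d(x_0,y_0) \ge c\,r$ with $c = 1/(2c_0)$, uniformly. Combining with the upper bound completes the proof.
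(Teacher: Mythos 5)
Your upper bound and your large-$r$ construction are fine, but the way you finally resolve the small-$r$ case rests on a scaling claim that is false, and this is a genuine gap. You assert that $\mathscr{W}$ is invariant under $\psi\mapsto\psi(\lambda\,\cdot)/\lambda$ for $\lambda\ge1$. Compute: if $\tilde\psi(z)=\lambda^{-1}\psi(\lambda z)$, then $\nabla\tilde\psi(z)=(\nabla\psi)(\lambda z)$, but $\nabla^2\tilde\psi(z)=\lambda\,(\nabla^2\psi)(\lambda z)$, so the Hessian seminorm is multiplied by $\lambda$; the class is preserved only for $\lambda\le1$. Consequently, pulling your unit-scale profile back to the original picture with $\lambda=1/r\ge1$ produces a function whose Hessian norm is of order $c_0/(2r)\gg1$ — exactly the violation you were trying to avoid — so the resulting test function is not in $\mathscr{W}$ and the bound $d(x_0,y_0)\ge r/(2c_0)$ does not follow from it. (The opposite scaling, $\psi\mapsto\lambda\psi(\cdot/\lambda)$ with $\lambda\ge1$, does map $\mathscr{W}$ into itself — that is what your $r\ge1$ case implicitly uses — but it goes the wrong way for $r\le1$.)

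The gap is easy to close, and the repair shows that no rescaling is needed for small $r$ at all: the defining conditions of $\mathscr{W}$ are only upper bounds on derivatives, so nothing forces the linear window of the ramp to have length comparable to $r$. Fix once and for all a smooth, bounded, odd $\theta$ with $\theta(s)=s/2$ for $|s|\le1/2$, $\|\theta'\|_\infty\le1$ and $\|\theta''\|_\infty\le1$ (integrate a plateau function; bringing a slope of $1/2$ down to $0$ with $|\theta''|\le1$ only needs an interval of length $1/2$). Given $x_0\ne y_0$, set $r=|x_0-y_0|$, $R=\max(r,1)$, $e=(x_0-y_0)/r$, and $\psi(z)=R\,\theta\bigl(e\cdot(z-\tfrac{x_0+y_0}{2})/R\bigr)$. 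Then $\|\nabla\psi\|_\infty\le1$, $\|\nabla^2\psi\|_\infty\le1/R\le1$, and since $r/(2R)\le1/2$ lies in the linear region, $\psi(x_0)-\psi(y_0)=2R\,\theta\bigl(r/(2R)\bigr)=r/2$, giving $d(x,y)\ge|x-y|/2$ uniformly in the scale. For context, the paper does not prove this lemma but quotes it from Davies; your overall strategy — explicit smoothed ramps as test functions — is the standard one, and only the scale bookkeeping at small $r$ needs the above correction.
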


Given $\alpha\in\R$ and $\psi\in\mathscr{W}$, define $\psi_\alpha(x)=\psi(\alpha x)$ and  $\phi(x)=e^{\psi_\alpha(x)}$. The analysis is based on the key observation that
\begin{align}\label{variation of purterbation Lame operator}
\langle-\phi^{-1}\mathcal{L}\phi v,u\rangle=&\mu\int(\alpha(\nabla\psi)_\alpha\otimes v+\nabla v):(-\alpha(\nabla\psi)_\alpha\otimes u+\nabla u)\,dx\nonumber\\
&+(\mu+\lambda)\int(\alpha(\nabla\psi)_\alpha\cdot v+\divg v)(-\alpha(\nabla\psi)_\alpha\cdot u+\divg u)\,dx
\end{align}
for any smooth vector fields $u$ and $v$. In particular, if $u=v$, we have
\begin{align}\label{coercivity of purterbation Lame operator}
\langle-\phi^{-1}\mathcal{L}\phi v,v\rangle\ge\|(-\mathcal{L})^{1/2}v\|^2-C\alpha^2\|v\|^2.
\end{align}

In what follows, we divide the proof of Theorem \ref{main theorem of heat kernel estimates} into three lemmas.
\begin{lemma}\label{first step Gaussian upper bound}
Let $n\in\{2,3\}$. There exists a constant $C=C(m,\mu,\lambda)$ such that for all $t>0$ and $x,y\in\R^n$,
\begin{align*}
|K_t(x,y)|\le \frac{C}{t^{n/2}}\exp\left\{-\frac{|x-y|^2}{Ct}\right\}.
\end{align*}
\end{lemma}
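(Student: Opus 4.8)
The plan is to obtain the Gaussian upper bound for $K_t(x,y)$ by the classical Davies perturbation method, adapting the exponential‑weight trick of \cite{davies ajm 1987} and the Sobolev‑to‑$L^\infty$ argument of \cite{davies jfa 1995} to the coupled Lam\'e system. First I would fix $\psi\in\mathscr{W}$, $\alpha\in\R$, set $\psi_\alpha(x)=\psi(\alpha x)$, $\phi=e^{\psi_\alpha}$, and consider the twisted semigroup $T_t^\alpha\vcentcolon=\phi^{-1}e^{tb\mathcal{L}}\phi$, whose generator is (the closure of) $\phi^{-1}b\mathcal{L}\phi=b\,(\phi^{-1}\mathcal{L}\phi)$. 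The key analytic input is the coercivity estimate \eqref{coercivity of purterbation Lame operator}, which combined with $m\le\rho\le 1/m$ shows that $b\,\phi^{-1}\mathcal{L}\phi$ is, up to the additive perturbation $C\alpha^2$, still dissipative on $(L^2,\langle\cdot,\cdot\rangle_\rho)$; hence $\|e^{C\alpha^2 t}T_t^\alpha\|_{L^2\to L^2}$ is bounded uniformly in $\psi$ and $\alpha$ (and, via the weighted/unweighted comparison, with constant depending only on $m$).

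Next I would upgrade this $L^2$ bound to an $L^2\to L^\infty$ (and by duality/self‑adjointness $L^1\to L^2$, then $L^1\to L^\infty$) bound for the twisted semigroup, uniformly in $\psi,\alpha$. The mechanism is exactly the one already used in the unperturbed case: the Gagliardo--Nirenberg inequality \eqref{GN inequality interpolate L infty} together with analyticity of the twisted semigroup. Concretely, one differentiates $\|u(t)\|^2$ along $u(t)=e^{C\alpha^2 t}T_t^\alpha u_0$, uses \eqref{coercivity of purterbation Lame operator} to control $\frac{d}{dt}\|u(t)\|_\rho^2$ by $-\|(-\mathcal{L})^{1/2}u\|^2$, and feeds this coercivity into the Nash/Moser--Davies iteration (or, more cheaply in $n\le 3$, directly into \eqref{GN inequality interpolate L infty} via the analyticity estimate $\|\mathcal{L}u(t)\|\lesssim t^{-1}\|u_0\|$ for the perturbed generator). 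This yields $\|e^{C\alpha^2 t}T_t^\alpha\|_{L^1\to L^\infty}\le C t^{-n/2}$ with $C=C(m,\mu,\lambda)$ independent of $\psi,\alpha$. Translating back, the kernel of $e^{tb\mathcal{L}}$ satisfies
\begin{align*}
e^{\psi_\alpha(x)-\psi_\alpha(y)}|K_t(x,y)|\le \frac{C}{t^{n/2}}e^{C\alpha^2 t},\qquad \text{a.e. }x,y.
\end{align*}

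Finally I would optimize over $\psi$ and $\alpha$. Taking the supremum over $\psi\in\mathscr{W}$ and using the definition of $d(x,y)$ gives $|K_t(x,y)|\le C t^{-n/2}e^{C\alpha^2 t-\alpha d(x,y)}$ for every $\alpha>0$ (after noting $\psi_\alpha(x)-\psi_\alpha(y)=\psi(\alpha x)-\psi(\alpha y)$ ranges, over $\psi\in\mathscr W$ rescaled appropriately, up to $\alpha\,d(x,y)$; the scaling of $\|\nabla^2\psi_\alpha\|_\infty=\alpha^2\|\nabla^2\psi\|_\infty$ is precisely what produces the harmless $\alpha^2$ in the exponent while keeping $\|\nabla\psi_\alpha\|_\infty\le\alpha$). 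Choosing $\alpha=d(x,y)/(2Ct)$ gives the bound $C t^{-n/2}\exp\{-d(x,y)^2/(4Ct)\}$, and Lemma \ref{equivalence of metric} converts $d(x,y)$ into $|x-y|$ at the cost of adjusting the constant, yielding the claim. The main obstacle is purely in making the perturbation bookkeeping uniform: one must check that \eqref{coercivity of purterbation Lame operator} holds with a constant $C$ depending only on $\mu,\lambda$ (not on $\psi$ or $\alpha$) — this is where the cross terms in \eqref{variation of purterbation Lame operator} are absorbed by Cauchy--Schwarz into $\varepsilon\|\nabla v\|^2+C_\varepsilon\alpha^2\|v\|^2$ using $\|\nabla\psi\|_\infty\le 1$ — and that the core/density issues (so that one may legitimately differentiate along twisted orbits and identify the twisted generator on $H^2$) are handled via Lemma \ref{identify a core}. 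Everything else is the standard Davies machinery, and the restriction $n\in\{2,3\}$ enters only through \eqref{GN inequality interpolate L infty}.
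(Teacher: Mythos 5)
Your overall strategy is exactly the paper's: twist the semigroup by $\phi=e^{\psi_\alpha}$, get a uniform $L^2$ bound from the coercivity \eqref{coercivity of purterbation Lame operator}, upgrade to $L^2\to L^\infty$ via \eqref{GN inequality interpolate L infty} (this is where $n\in\{2,3\}$ enters), pass to $L^1\to L^\infty$ by self-adjointness/duality and the semigroup property, read off the weighted kernel bound, and optimize over $\psi\in\mathscr{W}$ and $\alpha$ using Lemma \ref{equivalence of metric}. (Minor slip: it is $e^{-C\alpha^2t}T_t^\alpha$, not $e^{C\alpha^2t}T_t^\alpha$, that is uniformly bounded on $L^2$; your final displayed kernel bound has the correct sign.)

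However, there is a genuine gap at the crucial quantitative step. To use \eqref{GN inequality interpolate L infty} you need $\|\mathcal{L}v(t)\|\le Ct^{-1}e^{C\alpha^2t}\|u_0\|$ with $C$ independent of $\psi$ and $\alpha$, and you assert this "via the analyticity estimate for the perturbed generator." That does not come for free: the twisted generator $b\,\phi^{-1}\mathcal{L}\phi$ is \emph{not} self-adjoint (the first-order perturbation breaks symmetry), so Theorem \ref{generation theorem of selfadjoint operator} no longer applies, and even if one knew it generated an analytic semigroup, the analyticity constant could a priori depend on $\alpha$ and $\psi$ — uniformity here is precisely the point of Davies' method. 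Moreover, analyticity would only control the twisted operator $\phi^{-1}\mathcal{L}\phi\, v$, not $\mathcal{L}v$ itself; one still has to convert using the commutator identity \eqref{expression for Lv}, absorbing the terms $\alpha^2\|v\|+|\alpha|\|\nabla v\|$ by the interpolation $\|\nabla v\|\lesssim\|v\|^{1/2}\|\mathcal{L}v\|^{1/2}$. The paper fills exactly this hole by direct energy estimates: differentiating the equation in time to show $t\mapsto\|\partial_tv\|_\rho^2e^{-C\alpha^2t}$ is decreasing, testing with $\partial_tv$, and averaging over $[t/2,t]$ to obtain $\|\partial_tv(t)\|\le Ct^{-1}e^{C\alpha^2t}\|u_0\|$, then converting to $\|\mathcal{L}v(t)\|$ via \eqref{expression for Lv}. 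Your fallback suggestion of a Nash/Moser--Davies iteration is also not reliable here: for a genuinely coupled system such as the Lam\'e system there is no maximum principle and De Giorgi--Nash--Moser-type arguments can fail (indeed the paper stresses that fundamental matrices of systems need not obey Gaussian bounds); the whole reason for the restriction $n\le3$ and the $H^2$-based Gagliardo--Nirenberg trick is to bypass such iteration. So your outline is the right one, but the uniform time-derivative (equivalently $\mathcal{L}v$) estimate must be proved, not cited.
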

\begin{proof}
Denote $v=\phi^{-1}e^{tb\mathcal{L}}(\phi u_0)$, where $u_0\in L^2$. Then $v$ is a solution to the system
\begin{eqnarray}\label{equation for v}
\left\{\begin{aligned}
&\rho\partial_t v-\phi^{-1}\mathcal{L}\phi v=0,\\
&v(0)=u_0.
\end{aligned}\right.
\end{eqnarray}

We start with the energy estimates for $v$. Taking inner product of \eqref{equation for v} with $v$, then using \eqref{coercivity of purterbation Lame operator}, we get
\begin{align*}
\frac{1}{2}\frac{d}{dt}\|v\|_\rho^2+\|(-\mathcal{L})^{1/2}v\|^2\le C\alpha^2\|v\|_\rho^2.
\end{align*}
Applying Gronwall's inequality, we obtain
\begin{align}\label{L2 energy estimate for purterbation problem}
\|v(t)\|^2+\int_0^t\|(-\mathcal{L})^{1/2}v\|^2\,d\tau\le C\|u_0\|^2e^{C\alpha^2t}.
\end{align}
Differentiating \eqref{equation for v} with respect to $t$, we get by a similar argument that
\begin{align*}
\frac{1}{2}\frac{d}{dt}\|\partial_tv\|_\rho^2+\|(-\mathcal{L})^{1/2}\partial_tv\|^2\le C\alpha^2\|\partial_tv\|_\rho^2.
\end{align*}
So the function $t\mapsto\|\partial_tv\|_\rho^2e^{-C\alpha^2t}$ is decreasing. Consequently, we have
\begin{align}\label{time derivative energy estimate 1 for purterbation problem}
\|\partial_tv\|_\rho^2e^{-C\alpha^2t}\le \frac{2}{t}\int_{t/2}^{t}\|\partial_tv\|_\rho^2e^{-C\alpha^2\tau}\,d\tau.
\end{align}
Next, multiplying \eqref{equation for v} by $v_t$ and integrating in $x$, then using \eqref{variation of purterbation Lame operator} and the Cauchy-Schwarz inequality, we get
\begin{align*}
\|\partial_tv\|_\rho^2+\frac12\frac{d}{dt}\|(-\mathcal{L})^{1/2}v\|^2\le C\left(\alpha^2\|v\|\|\partial_tv\|+|\alpha|\|\nabla v\|\|\partial_tv\|\right).
\end{align*}
The term $\|\partial_tv\|$ on the right side can be absorbed by $\|\partial_tv\|_\rho^2$ on the left side. This together with \eqref{L2 energy estimate for purterbation problem} and \eqref{coercivity of Lame operator} gives
\begin{align*}
\|\partial_tv\|_\rho^2+\frac{d}{dt}\|(-\mathcal{L})^{1/2}v\|^2\le C\left(\alpha^4e^{C\alpha^2t}\|u_0\|^2+\alpha^2\|(-\mathcal{L})^{1/2}v\|^2\right).
\end{align*}
So,
\begin{align}\label{time derivative energy estimate 2 for purterbation problem}
\|\partial_tv\|_\rho^2e^{-C\alpha^2t}+\frac{d}{dt}(\|(-\mathcal{L})^{1/2}v\|^2e^{-C\alpha^2t})\le C\alpha^4\|u_0\|^2.
\end{align}
Combing \eqref{L2 energy estimate for purterbation problem} and \eqref{time derivative energy estimate 2 for purterbation problem}, we have
\begin{align*}
\int_{t/2}^{t}\|\partial_tv\|_\rho^2e^{-C\alpha^2\tau}\,d\tau+\|(-\mathcal{L})^{1/2}v(t)\|^2\le C\left(\frac1t+\alpha^4t\right)\|u_0\|^2e^{C\alpha^2t},
\end{align*}
which together with \eqref{time derivative energy estimate 1 for purterbation problem} further implies
\begin{align}\label{time derivative energy estimate 3 for purterbation problem}
\|\partial_tv(t)\|\le C\left(\alpha^2+\frac{1}{t}\right)\|u_0\|e^{C\alpha^2t}\le \frac{C}{t}\|u_0\|e^{C\alpha^2t}.
\end{align}

The above estimate should imply the corresponding $L^2$ estimate of $\mathcal{L}v$. To see this, we get by a direct computation that
\begin{align}\label{expression for Lv}
-\mathcal{L}v=&-\rho\partial_tv+\mu(\alpha^2|\nabla\psi|_\alpha^2v+2\alpha(\nabla\psi)_\alpha\cdot\nabla v+\alpha^2(\Delta\psi)_\alpha v)\nonumber\\
&+(\mu+\lambda)(\alpha\divg v(\nabla\psi)_\alpha+\alpha\nabla v(\nabla\psi)_\alpha+\alpha^2(v\cdot(\nabla\psi)_\alpha)(\nabla\psi)_\alpha+\alpha^2(\nabla^2\psi)_{\alpha}v).
\end{align}
Then it is easy to see that
\begin{align*}
\|\mathcal{L}v\|\le C(\|\partial_tv\|+\alpha^2\|v\|+|\alpha|\|\nabla v\|).
\end{align*}
The first order derivative can be handled by using the interpolation inequality
\begin{align*}
\|\nabla v\|\le C\|v\|^{1/2}\|\mathcal{L} v\|^{1/2}.
\end{align*}
So,
\begin{align*}
\|\mathcal{L}v\|\le C(\|\partial_tv\|+\alpha^2\|v\|).
\end{align*}
Substituting for $\|v\|$ and $\|\partial_tv\|$ by \eqref{L2 energy estimate for purterbation problem} and \eqref{time derivative energy estimate 3 for purterbation problem}, respectively, we have
\begin{align*}
\|\mathcal{L}v(t)\|\le \frac{C}{t}\|u_0\|e^{C\alpha^2t}.
\end{align*}

Now using the Gagliardo-Nirenberg inequality \eqref{GN inequality interpolate L infty}, we obtain
\begin{align}\label{from L2 to Linfty}
\|v(t)\|_\infty\le \frac{C}{t^{n/4}}\|u_0\|e^{C\alpha^2t}.
\end{align}
This means the operator $\phi^{-1}e^{tb\mathcal{L}}\phi$ is bounded from $L^2$ to $L^\infty$. A duality argument gives the bound from $L^1$ to $L^2$
\begin{align}\label{from L1 to L2}
\|v(t)\|\le \frac{C}{t^{n/4}}\|u_0\|_1e^{C\alpha^2t}.
\end{align}
While this along with the semigroup property of $\phi^{-1}e^{tb\mathcal{L}}\phi$ gives
\begin{align}\label{from L1 to Linfty}
\|v(t)\|_\infty\le \frac{C}{t^{n/2}}\|u_0\|_1e^{C\alpha^2t}.
\end{align}
Noticing that the kernel of $\phi^{-1}e^{tb\mathcal{L}}\phi$ is $K_t(x,y)e^{\psi(\alpha y)-\psi(\alpha x)}$, we get
\begin{align*}
|K_t(x,y)|\le\frac{C}{t^{n/2}}\exp\{C\alpha^2t+\psi(\alpha x)-\psi(\alpha y)\}.
\end{align*}
Replacing $\psi$ by $-\psi$, we have
\begin{align*}
|K_t(x,y)|\le\frac{C}{t^{n/2}}\exp\{C\alpha^2t-|\psi(\alpha x)-\psi(\alpha y)|\}.
\end{align*}
It follows by optimizing with respect to $\psi\in\mathscr{W}$ and applying Lemma \ref{equivalence of metric} that
\begin{align*}
|K_t(x,y)|\le\frac{C}{t^{n/2}}\exp\{C\alpha^2t-C^{-1}|\alpha||x-y|\}.
\end{align*}
Finally, minimizing the bound by choosing $\alpha=\frac{|x-y|}{2C^2t}$ completes the proof.
\end{proof}

\begin{lemma}\label{second step gradient estimate}
Let $n\in\{2,3\}$. There exists a constant $C=C(m,\mu,\lambda)$ such that for all $t>0$ and $x,y\in\R^n$,
\begin{align*}
|\nabla_x K_t(x,y)|\le \frac{C}{t^{(n+1)/2}}\exp\left\{-\frac{|x-y|^2}{Ct}\right\}.
\end{align*}
\end{lemma}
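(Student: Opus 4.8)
The plan is to mimic, one derivative higher, the Davies perturbation argument used for Lemma~\ref{first step Gaussian upper bound}. Fix $\psi\in\mathscr{W}$, set $\phi=e^{\psi_\alpha}$, and let $v=\phi^{-1}e^{tb\mathcal{L}}(\phi u_0)$ be the solution of \eqref{equation for v}. Since the kernel of $u_0\mapsto\nabla v(t)$ is $\nabla_x\big(e^{-\psi_\alpha(x)}K_t(x,y)e^{\psi_\alpha(y)}\big)$ and $\nabla_x e^{-\psi_\alpha(x)}=-\alpha(\nabla\psi)_\alpha(x)\,e^{-\psi_\alpha(x)}$, the whole statement reduces to proving
\[
\|\nabla v(t)\|_\infty\le C\,t^{-(n+1)/2}\|u_0\|_1\,e^{C\alpha^2 t}
\]
with $C$ independent of $\alpha$: combined with $|K_t(x,y)|\lesssim t^{-n/2}e^{C\alpha^2 t+\psi_\alpha(x)-\psi_\alpha(y)}$ from \eqref{from L1 to Linfty}, this gives $|\nabla_x K_t(x,y)|\lesssim\big(t^{-(n+1)/2}+|\alpha|t^{-n/2}\big)e^{C\alpha^2 t+\psi_\alpha(x)-\psi_\alpha(y)}$, and then replacing $\psi$ by $-\psi$, optimizing over $\psi\in\mathscr{W}$, invoking Lemma~\ref{equivalence of metric}, and finally taking $\alpha\simeq|x-y|/t$ yields the asserted Gaussian bound exactly as at the end of the proof of Lemma~\ref{first step Gaussian upper bound} (the extra term $|\alpha|t^{-n/2}$ is harmless because $\tfrac{|x-y|}{\sqrt t}e^{-c|x-y|^2/t}\lesssim e^{-c'|x-y|^2/t}$).

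First I would collect time-derivative bounds. Differentiating \eqref{equation for v} in $t$ shows that $\partial_t v$ solves the same system, and by analyticity of $e^{tb\mathcal{L}}$ together with \eqref{expression for Lv} one has $v(t),\partial_t v(t)\in H^2$ for every $t>0$; hence all the estimates established in the proof of Lemma~\ref{first step Gaussian upper bound} apply verbatim to $\partial_t v$. Using the semigroup property to feed $v(t/2)$ as initial datum, together with the $L^1\to L^2$ bound \eqref{from L1 to L2}, the bound \eqref{time derivative energy estimate 3 for purterbation problem}, and the $L^2$ bound on $\mathcal{L}v$ proved there, I would obtain
\[
\|\partial_t v(t)\|\lesssim t^{-1-n/4}\|u_0\|_1e^{C\alpha^2 t},\qquad\|\mathcal{L}\partial_t v(t)\|\lesssim t^{-2-n/4}\|u_0\|_1e^{C\alpha^2 t}.
\]
Since $n\le3$ and $\partial_t v(t)\in H^2$, the Gagliardo--Nirenberg inequality \eqref{GN inequality interpolate L infty} then gives $\|\partial_t v(t)\|_\infty\lesssim t^{-1-n/2}\|u_0\|_1e^{C\alpha^2 t}$.

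Next, I would turn the identity \eqref{expression for Lv} into an elliptic bootstrap. Reading it as $\mathcal{L}v=-\rho\,\partial_t v+G$, where $G$ consists of zeroth- and first-order terms in $v$ with smooth bounded coefficients and prefactors $\alpha^2$ and $\alpha$ respectively, and using that for $n\le3$ one has $H^2\hookrightarrow L^\infty$ and $H^1\hookrightarrow L^p$ for some finite $p>n$ (so $v,\partial_t v\in L^\infty$ and $\nabla v\in L^p$ are already controlled above), I get $\mathcal{L}v(t)\in L^2\cap L^p$. Constant-coefficient $L^p$-regularity for $\mathcal{L}$ (the $L^p$ analogue of \eqref{equivalence between Lame and Laplacian}, valid since $\mu,\nu>0$) then yields $\|\nabla^2 v\|_p\lesssim\|\mathcal{L}v\|_p\lesssim\|\partial_t v\|_p+\alpha^2\|v\|_p+|\alpha|\,\|\nabla v\|_p$; absorbing the last term via $\|\nabla v\|_p\lesssim\|\nabla^2 v\|_p^{1/2}\|v\|_p^{1/2}$ and Young's inequality leaves $\|\nabla^2 v\|_p\lesssim\|\partial_t v\|_p+(1+\alpha^2)\|v\|_p$. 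Interpolating these $L^p$ norms between $L^2$ and $L^\infty$ (with the bounds of the previous paragraph and $\|v(t)\|_2\lesssim t^{-n/4}\|u_0\|_1e^{C\alpha^2 t}$, $\|v(t)\|_\infty\lesssim t^{-n/2}\|u_0\|_1e^{C\alpha^2 t}$ from Lemma~\ref{first step Gaussian upper bound}), and then using $W^{2,p}\hookrightarrow W^{1,\infty}$ for $p>n$ in the form $\|\nabla v\|_\infty\lesssim\|\nabla v\|_p^{1-n/p}\|\nabla^2 v\|_p^{n/p}$, the scaling-correct powers of $t$ come out and deliver $\|\nabla v(t)\|_\infty\lesssim t^{-(n+1)/2}\|u_0\|_1e^{C\alpha^2 t}$; the stray factors of $\alpha$ are carried faithfully to the end and, after the choice $\alpha\simeq|x-y|/t$, are swallowed by the Gaussian exactly as in Lemma~\ref{first step Gaussian upper bound}.

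The step I expect to be the main obstacle is precisely this last bootstrap. Because $\rho$ is merely bounded and measurable, one cannot differentiate \eqref{equation for v} (or \eqref{expression for Lv}) in space, so $v$ carries no spatial regularity beyond $H^2$; the extra half-derivative of smoothing needed to reach $W^{1,\infty}$ must therefore be extracted from the time variable through the algebraic identity for $\mathcal{L}v$. This only closes because the dimension is $\le3$: that is exactly what makes $\partial_t v(t)\in H^2\hookrightarrow L^\infty$ and simultaneously allows $\mathcal{L}v(t)\in L^p$ with $p>n$, so that constant-coefficient elliptic regularity plus Sobolev embedding promote the gradient to $L^\infty$. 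Everything else --- tracking the weights $e^{C\alpha^2 t}$, the spurious $\alpha$-prefactors, and the final optimization --- is routine and parallels the proof of Lemma~\ref{first step Gaussian upper bound}.
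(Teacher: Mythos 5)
Your proposal is correct, and its skeleton coincides with the paper's: the same Davies twist $v=\phi^{-1}e^{tb\mathcal{L}}(\phi u_0)$, the same reduction of the kernel bound to $\|\nabla v(t)\|_\infty\lesssim t^{-(n+1)/2}\|u_0\|_1e^{C\alpha^2t}$ (your bookkeeping of the extra $|\alpha|t^{-n/2}$ term is the mirror image of the paper's identity \eqref{express u in terms of v}), and the same bootstrap of the time derivative through the semigroup property, \eqref{from L1 to L2} and \eqref{time derivative energy estimate 3 for purterbation problem}, leading to $\|\partial_tv(t)\|_\infty\lesssim t^{-1-n/2}\|u_0\|_1e^{C\alpha^2t}$ as in \eqref{L1 to Linfty estimate for time derivative} (the paper gets this via the $L^2\to L^\infty$ operator bound \eqref{from L2 to Linfty} rather than via your extra estimate on $\|\mathcal{L}\partial_tv\|_2$ plus Gagliardo--Nirenberg, but both are fine). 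Where you genuinely diverge is the endgame: the paper stays entirely in $L^\infty$, reading \eqref{expression for Lv} in the sup norm and combining it with the Littlewood--Paley interpolation inequality \eqref{GN inequality via Littlewood Paley}, $\|\nabla v\|_\infty\lesssim\|v\|_\infty^{1/2}\|\mathcal{L}v\|_\infty^{1/2}$, after which \eqref{Linfty gradient estimate for u} gives the gradient bound in two lines; you instead pass through $L^p$ with $p>n$, invoking Calder\'on--Zygmund regularity for the constant-coefficient $\mathcal{L}$ (the $L^p$ version of \eqref{equivalence between Lame and Laplacian}), interpolation of the $L^2$/$L^\infty$ bounds, and Morrey-type embedding $W^{2,p}\hookrightarrow W^{1,\infty}$. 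Your route is scaling-consistent and the exponent $-(n+1)/2$ does come out (I checked the arithmetic), and the $\alpha$-prefactors are indeed swallowed by the Gaussian after $\alpha\simeq|x-y|/t$; its only cost is that it is longer and requires the additional bound on $\|\mathcal{L}\partial_tv\|_2$ and a brief a priori qualitative justification that $v(t)\in W^{2,p}$ before the absorption of $|\alpha|\|\nabla v\|_p$ (a point of the same nature as the implicit finiteness of $\|\mathcal{L}v\|_\infty$ in the paper's absorption, so not a real gap). In short: same perturbation framework, different and equally valid functional-inequality finish; the paper's $L^\infty$ interpolation avoids $L^p$ elliptic theory altogether, while your $L^p$ detour uses only classical Sobolev/CZ tools and no Littlewood--Paley input.
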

\begin{proof}
Apparently, we only need to show the bound for $|\nabla_x K_t(x,y)|$. Denote $u=e^{tb\mathcal{L}}(\phi u_0)$ and $v=\phi^{-1}u$, where $u_0\in L^2$. We need to bound the norm $\|\phi^{-1}\nabla u(t)\|_\infty$. To this end, let us first study the norm $\|\nabla v(t)\|_\infty$ since 
\begin{align}\label{express u in terms of v}
\phi^{-1}\nabla u(t)=\nabla v+\alpha(\nabla\psi)_{\alpha}\otimes v.
\end{align}

By the equation \eqref{expression for Lv}, we see that
\begin{align}\label{L infty estimate for Lv}
\|\mathcal{L}v\|_\infty\le C(\|\partial_tv\|_\infty+\alpha^2\|v\|_\infty+|\alpha|\|\nabla v\|_\infty).
\end{align}
Using Littlewood-Paley and \eqref{from Lame to Laplacian}, one can prove the interpolation inequality
\begin{align}\label{GN inequality via Littlewood Paley}
\|\nabla v\|_\infty\le C\| v\|_\infty^{1/2}\|\mathcal{L}v\|_\infty^{1/2}.
\end{align}
Plugging \eqref{GN inequality via Littlewood Paley} in \eqref{L infty estimate for Lv} gives
\begin{align}\label{L infty estimate for Lv 2}
\|\mathcal{L}v\|_\infty\le C(\|\partial_tv\|_\infty+\alpha^2\|v\|_\infty).
\end{align}
Then combining \eqref{express u in terms of v}, \eqref{GN inequality via Littlewood Paley} and \eqref{L infty estimate for Lv 2}, we arrive at
\begin{align}\label{Linfty gradient estimate for u}
\|\phi^{-1}\nabla u(t)\|_\infty\le C\left(|\alpha|\|v(t)\|_\infty+\|v(t)\|_\infty^{1/2}\|\partial_tv(t)\|_{\infty}^{1/2}\right).
\end{align}

Next, in order to bound $\|\partial_tv(t)\|_{\infty}$, we observe that
\begin{align*}
\partial_tv(t)=\phi^{-1}e^{\frac{t}{2}b\mathcal{L}}\phi[\partial_tv(t/2)].
\end{align*}
So, in view of \eqref{from L2 to Linfty}, \eqref{time derivative energy estimate 3 for purterbation problem} and \eqref{from L1 to L2}, we get
\begin{align}\label{L1 to Linfty estimate for time derivative}
\|\partial_tv(t)\|_{\infty}\le\frac{C}{t^{n/4}}e^{C\alpha^2t}\|\partial_tv(t/2)\|\le\frac{C}{t^{1+n/4}}e^{C\alpha^2t}\|v(t/4)\|\le\frac{C}{t^{1+n/2}}e^{C\alpha^2t}\|u_0\|_1.
\end{align}
Plugging the above in \eqref{Linfty gradient estimate for u} and using \eqref{from L1 to Linfty}, we have
\begin{align}
\|\phi^{-1}\nabla u(t)\|_\infty\le C\left(\frac{|\alpha|}{t^{n/2}}+\frac{1}{t^{(n+1)/2}}\right)e^{C\alpha^2t}\|u_0\|_1\le\frac{C}{t^{(n+1)/2}}e^{C\alpha^2t}\|u_0\|_1.
\end{align}
Thus,
\begin{align*}
|\nabla_x K_t(x,y)|\le \frac{C}{t^{(n+1)/2}}\exp\{C\alpha^2t+\psi(\alpha x)-\psi(\alpha y)\}.
\end{align*}
Again, we finish the proof by optimizing the bound with respect to $\psi\in\mathscr{W}$ and then $\alpha\in\R$.
\end{proof}
\begin{remark}\label{remark on analyticity on Lp}
From \eqref{L1 to Linfty estimate for time derivative}, we also see that the kernel of $tb\mathcal{L}e^{tb\mathcal{L}}$ has a pointwise Gaussian upper bound. In particular, $tb\mathcal{L}e^{tb\mathcal{L}}$ extends to a bounded operator on $L^p$ for every $t>0$.
\end{remark}

\begin{lemma}\label{last step Holder estimate for gradient}
Let $n\in\{2,3\}$. For any $\gamma\in(0,1)$, there exists a constant $C=C(m,\mu,\lambda,\gamma)$ such that for all $t>0$ and $x,y,h\in\R^n$, \eqref{Holder estimate for gradient 1} and \eqref{Holder estimate for gradient 2} hold whenever $2|h|\le\sqrt{t}$.
\end{lemma}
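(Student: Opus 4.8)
The plan is to reduce Theorem~\ref{main theorem of heat kernel estimates} to estimates for $K_t$ and to treat the two statements separately: the $y$-estimate \eqref{Holder estimate for gradient 2} by a soft semigroup argument, and the $x$-estimate \eqref{Holder estimate for gradient 1} by continuing the Davies scheme of Lemmas~\ref{first step Gaussian upper bound} and~\ref{second step gradient estimate}. Since $S_t(x,y)=K_t(x,y)b(y)$ with $m\le b\le 1/m$, the factor $b(y)$ is constant in $x$, so \eqref{Holder estimate for gradient 1} is equivalent to the corresponding H\"older estimate for $\nabla_xK_t(\cdot,y)$; the first bound $|S_t|+\sqrt t|\nabla_xS_t|\le C t^{-n/2}\exp\{-|x-y|^2/(Ct)\}$ in Theorem~\ref{main theorem of heat kernel estimates} is immediate from Lemmas~\ref{first step Gaussian upper bound} and~\ref{second step gradient estimate}. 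Moreover, by Lemma~\ref{bL generates bounded analytic semigroup} we have $S_t(x,y)=S_t^{\intercal}(y,x)$, so the Gaussian bound for $\nabla_xK_t$ from Lemma~\ref{second step gradient estimate} also yields $|\nabla_yS_t(x,y)|\le C t^{-(n+1)/2}\exp\{-|x-y|^2/(Ct)\}$.

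For \eqref{Holder estimate for gradient 2} I would use the semigroup identity $e^{tb\mathcal{L}}b=e^{\frac t2 b\mathcal{L}}\cdot(e^{\frac t2 b\mathcal{L}}b)$, which at the level of kernels reads $S_t(x,y)=\int_{\R^n}K_{t/2}(x,z)S_{t/2}(z,y)\,dz$; differentiating under the integral (justified by the Gaussian bounds of Lemmas~\ref{first step Gaussian upper bound} and~\ref{second step gradient estimate}) gives
\[
\nabla_xS_t(x,y+h)-\nabla_xS_t(x,y)=\int_{\R^n}\nabla_xK_{t/2}(x,z)\bigl(S_{t/2}(z,y+h)-S_{t/2}(z,y)\bigr)\,dz.
\]
By the mean value theorem and the Gaussian bound for $\nabla_yS_{t/2}$ noted above, and using $2|h|\le\sqrt t$ to replace the Gaussian along the segment $[y,y+h]$ by a comparable one centered at $y$, the bracket is bounded by $C|h|\,t^{-(n+1)/2}\exp\{-|z-y|^2/(Ct)\}$. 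Convolving the two Gaussians yields $C|h|\,t^{-(n+2)/2}\exp\{-|x-y|^2/(Ct)\}$, and finally $|h|=|h|^\gamma|h|^{1-\gamma}\le|h|^\gamma(\sqrt t)^{1-\gamma}$ (valid since $|h|\le\sqrt t$) turns this into the required $(|h|/\sqrt t)^\gamma\,C\,t^{-(n+1)/2}\exp\{-|x-y|^2/(Ct)\}$. This argument needs no new PDE input.

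For \eqref{Holder estimate for gradient 1} I would keep the notation $u=e^{tb\mathcal{L}}(\phi u_0)$, $v=\phi^{-1}u$ of Lemmas~\ref{first step Gaussian upper bound} and~\ref{second step gradient estimate} and first establish an $L^q$ bound on $\mathcal{L}v(t)$ for all $q\in[2,\infty]$. Indeed, \eqref{L infty estimate for Lv 2} together with \eqref{L1 to Linfty estimate for time derivative} and \eqref{from L1 to Linfty} already gives $\|\mathcal{L}v(t)\|_\infty\le C t^{-1-n/2}e^{C\alpha^2t}\|u_0\|_1$; splitting the semigroup once more, the $L^2$ bound $\|\mathcal{L}v(t)\|\le C t^{-1}e^{C\alpha^2t}\|v(0)\|$ established in the proof of Lemma~\ref{first step Gaussian upper bound}, combined with \eqref{from L1 to L2}, gives $\|\mathcal{L}v(t)\|_2\le C t^{-1-n/4}e^{C\alpha^2t}\|u_0\|_1$; interpolation then yields $\|\mathcal{L}v(t)\|_q\le C t^{-1-n/2+n/(2q)}e^{C\alpha^2t}\|u_0\|_1$. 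Since $\mathcal{L}$ is a constant-coefficient elliptic operator, \eqref{equivalence between Lame and Laplacian} and the Calder\'on--Zygmund inequality give $\|\nabla^2v(t)\|_q\le C\|\mathcal{L}v(t)\|_q$ for $1<q<\infty$, and Morrey's inequality gives $[\nabla v(t)]_{C^{0,1-n/q}(\R^n)}\le C\|\nabla^2v(t)\|_q$. Fixing $q>n$ with $1-n/q>\gamma$ and restricting to $2|h|\le\sqrt t$, the bound $|h|^{1-n/q}\le|h|^\gamma(\sqrt t)^{1-n/q-\gamma}$ converts this into $|\nabla v(t,x+h)-\nabla v(t,x)|\le|h|^\gamma\,C\,t^{-(n+1+\gamma)/2}e^{C\alpha^2t}\|u_0\|_1$.

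It then remains to pass from $\nabla v$ to the kernel of $\nabla_xK_t$. Using \eqref{express u in terms of v}, the $C^\gamma$-seminorm on $\{2|h|\le\sqrt t\}$ of the extra term $\alpha(\nabla\psi)_\alpha\otimes v$ is controlled by the already-available bounds on $\|v(t)\|_\infty$ and $\|\nabla v(t)\|_\infty$ and, after the optimization in $\alpha$ below, is of lower order; hence on that range $[\phi^{-1}\nabla u(t)]_{C^\gamma}\le|h|^\gamma\,C\,t^{-(n+1+\gamma)/2}e^{C\alpha^2t}\|u_0\|_1$. Reading this off the kernel $e^{\psi(\alpha y)-\psi(\alpha x)}\nabla_xK_t(x,y)$ of $\phi^{-1}\nabla u(t)$, separating the exponential weight exactly as in Lemmas~\ref{first step Gaussian upper bound} and~\ref{second step gradient estimate}, then optimizing over $\psi\in\mathscr{W}$ via Lemma~\ref{equivalence of metric} and over $\alpha\in\R$ (taking $\alpha\simeq|x-y|/t$) produces \eqref{Holder estimate for gradient 1} for $K_t$, hence for $S_t$. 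I expect the main obstacle to be exactly this last step together with the $L^q$ estimate on $\mathcal{L}v$: one must invoke elliptic regularity only for the constant-coefficient operator $\mathcal{L}$, never for $\rho$, which is merely bounded --- this is precisely why the conclusion is $C^{1,\gamma}$ rather than $C^2$, and why $L^q$ (rather than $L^\infty$ or $C^\alpha$) bounds on $\mathcal{L}v$ are the right input --- and one must track the powers of $t$ and $\alpha$ through the Gaussian optimization, absorbing the extraneous polynomial factors in $|x-y|/\sqrt t$ into a slightly larger constant in the exponent.
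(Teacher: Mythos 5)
Your proposal is correct in substance, but it reaches the Gaussian factors by a different (and heavier) route than the paper. For \eqref{Holder estimate for gradient 1} the paper drops the Davies weights altogether: working with the unweighted solution $u=e^{tb\mathcal{L}}u_0$ it gets $t^{n/4}\|\mathcal{L}u(t)\|+t^{n/2}\|\mathcal{L}u(t)\|_\infty\lesssim t^{-1}\|u_0\|_1$, interpolates in $L^q$ with $q=n/(1-\gamma)$, uses Calder\'on--Zygmund and $\dot W^{1,q}\hookrightarrow\dot C^{\gamma}$ to obtain the H\"older bound $|h|^{\gamma}Ct^{-(n+1+\gamma)/2}\|u_0\|_1$ with no exponential factor, and then reinstates the Gaussian decay "for free" by writing the difference as $(|\nabla_xK_t(x+h,y)|+|\nabla_xK_t(x,y)|)^{1-\beta}$ times the difference to the power $\beta$ and invoking Lemma \ref{second step gradient estimate}; this soft interpolation is where $2|h|\le\sqrt t$ enters. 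You instead carry the weight $\phi$ through the second-order estimate and recover the decay by optimizing over $\psi$ and $\alpha$; your $t$- and $\alpha$-bookkeeping is right, but this forces one extra step you only gesture at: the $L^1\to L^\infty$ bound for the difference operator controls the kernel $\phi^{-1}(x+h)\nabla_xK_t(x+h,y)\phi(y)-\phi^{-1}(x)\nabla_xK_t(x,y)\phi(y)$, not the weighted difference $e^{\psi(\alpha y)-\psi(\alpha x)}\bigl(\nabla_xK_t(x+h,y)-\nabla_xK_t(x,y)\bigr)$, so a commutator term $\bigl(\phi^{-1}(x+h)-\phi^{-1}(x)\bigr)\nabla_xK_t(x+h,y)\phi(y)$ must be absorbed --- routine via Lemma \ref{second step gradient estimate} and $|\alpha||h|\le|\alpha|\sqrt t\lesssim e^{\alpha^2t}$, but it has no analogue in Lemmas \ref{first step Gaussian upper bound} and \ref{second step gradient estimate}, where the estimated operator is exactly the conjugated one. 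For \eqref{Holder estimate for gradient 2} the paper argues at the operator level: with $\delta_hu_0(x)=u_0(x-h)-u_0(x)$, the $y$-difference of the kernel is the kernel of $u_0\mapsto\nabla e^{tb\mathcal{L}}(b\delta_hu_0)$, bounded by $Ct^{-1/2}\|e^{\frac t2 b\mathcal{L}}(b\delta_hu_0)\|_\infty\lesssim|h|\,t^{-1-n/2}\|u_0\|_1$, and the Gaussian factor is again inserted by the same interpolation trick; your Chapman--Kolmogorov splitting at the kernel level with the mean value theorem in $y$ is the same idea implemented pointwise, keeping the decay throughout (with the minor caveat that a.e.\ differentiability of $y\mapsto S_{t/2}(z,y)$ must be read off the Lipschitz bound coming from Lemma \ref{second step gradient estimate} and the symmetry $S_t(x,y)=S_t^{\intercal}(y,x)$). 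Net effect: both routes prove the lemma; the paper's is lighter because the Gaussian factor is imported from the already-proved gradient bound rather than re-derived through the weighted scheme, while yours avoids the (harmless, since $\gamma\in(0,1)$ is arbitrary) loss of H\"older exponent inherent in the $\beta$-interpolation step.
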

\begin{proof}
Let $u=e^{tb\mathcal{L}}u_0$. By Lemmas \ref{bL generates bounded analytic semigroup} and \ref{first step Gaussian upper bound}, we have
\begin{align*}
t^{n/4}\|\mathcal{L}u(t)\|+t^{n/2}\|\mathcal{L}u(t)\|_\infty\le\frac{C}{t}\|u_0\|_1.
\end{align*}
For any $\gamma\in(0,1)$, let $q=\frac{n}{1-\gamma}$ and $\theta=\frac{2(1-\gamma)}{n}$. Then we use the embedding $\Dot{W}^{1,q}(\R^n)\hookrightarrow \dot{C}^{\gamma}(\R^n)$ to get
\begin{align*}
\|\nabla u\|_{\dot{C}^{\gamma}}\le C\|\nabla^2u\|_q\le C\|\mathcal{L}u\|_q\le C\|\mathcal{L}u\|_{2}^{\theta}\|\mathcal{L}u\|_{\infty}^{1-\theta}\le\frac{C}{t^{(n+1+\gamma)/2}}\|u_0\|_1.
\end{align*}
Thus, we have for any $h\in\R^n$ that
\begin{align*}
|\nabla_x K_t(x+h,y)-\nabla_x K_t(x,y)|\le\frac{C}{t^{(n+1)/2}}\left(\frac{|h|}{\sqrt{t}}\right)^{\gamma}.
\end{align*}
The exponential decay factor in \eqref{Holder estimate for gradient 1} can be easily obtained by the observation that
\begin{align*}
&|\nabla_x K_t(x+h,y)-\nabla_x K_t(x,y)|\\
\le&(|\nabla_x K_t(x+h,y)|+|\nabla_x K_t(x,y)|)^{1-\beta}|\nabla_x K_t(x+h,y)-\nabla_x K_t(x,y)|^{\beta}
\end{align*}
for any $\beta\in(0,1)$. This proves \eqref{Holder estimate for gradient 1}.

To prove $\eqref{Holder estimate for gradient 2}$, we write
\begin{align*}
\int[\nabla_x S_t(x,y+h)-\nabla_x S_t(x,y)]u_0(y)\,dy=\nabla e^{tb\mathcal{L}}(b\delta_hu_0)
\end{align*}
with $\delta_hu_0(x)=u_0(x-h)-u_0(x)$. Using Lemma \ref{second step gradient estimate}, the right side can be estimated as follows
\begin{align*}
\|\nabla e^{tb\mathcal{L}}(b\delta_hu_0)\|_{\infty}\le\frac{C}{\sqrt{t}}\|e^{\frac{t}{2}b\mathcal{L}}(b\delta_hu_0)\|_{\infty}\le\frac{C|h|}{t^{1+n/2}}\|u_0\|_1.
\end{align*}
The bound in \eqref{Holder estimate for gradient 2} can be shown by a similar argument as the first part of the proof. This completes the proof of the lemma.
\end{proof}

For completeness, we conclude this section by finishing the proof of Theorem \ref{main theorem of heat kernel estimates}.
\begin{proof}[Proof of Theorem \ref{main theorem of heat kernel estimates}]
Lemmas \ref{first step Gaussian upper bound}-\ref{last step Holder estimate for gradient} constitute the proof of Theorem \ref{main theorem of heat kernel estimates}.
\end{proof}

\bigskip

\section{An abstract $L^1$ theory}\label{maximal regularity for ACP section}

\bigskip

In this section, we are concerned with the $L^1$-in-time theory for the abstract Cauchy problem \eqref{ACP associated with composite operator}, where $\mathcal{S}$ is a composition of bounded and unbounded operator. We follow our prior work \cite{huan 2021} closely and we do not explicitly use the theory of interpolation spaces. 

Let $(X,\|\cdot\|)$ be a Banach space. We temporarily just assume
\begin{assumption}\label{assume S is injective and generates bounded analytic semigroup}
$\mathcal{S}:D(\mathcal{S})\subset X\rightarrow X$ is an one-to-one operator that generates a bounded analytic semigroup $e^{t\mathcal{S}}$ on $X$.  
\end{assumption}

Given $s\in(0,2)$, we define
\begin{align*}
\|x\|_{\dot{B}_{X,1}^{s,\mathcal{S}}}\vcentcolon=\|t^{-s/2}\|t\mathcal{S}e^{t\mathcal{S}}x\|\|_{L^1(\R_+,\frac{dt}{t})}
\end{align*}
and
\begin{align*}
\|x\|_{\dot{B}_{X,1}^{-s,\mathcal{S}}}\vcentcolon=\|t^{s/2}\|e^{t\mathcal{S}}x\|\|_{L^1(\R_+,\frac{dt}{t})}.
\end{align*}
In view of Lemmas \ref{characterization via classic heat kernel} and \ref{characterization via heat kernel of Lame operator}, the above notations make sense if we pretend that $\mathcal{S}$ is a second-order elliptic operator. For any $x\in D(\mathcal{S})$, since $\|t\mathcal{S}e^{t\mathcal{S}}x\|\lesssim\|x\|\wedge\|t\mathcal{S}x\|$, we easily see that
\begin{align*}
\|x\|_{\dot{B}_{X,1}^{s,\mathcal{S}}}\lesssim\|x\|_{D(\mathcal{S})}\vcentcolon=\|x\|+\|\mathcal{S}x\|.
\end{align*}
While for $x\in R(\mathcal{S})$, the range of $\mathcal{S}$, we have
\begin{align*}
\|x\|_{\dot{B}_{X,1}^{-s,\mathcal{S}}}=\|\mathcal{S}^{-1}x\|_{\dot{B}_{X,1}^{2-s,\mathcal{S}}}\lesssim\|x\|_{R(\mathcal{S})}\vcentcolon=\|x\|+\|\mathcal{S}^{-1}x\|.
\end{align*}
\begin{definition}
Define $\dot{B}_{X,1}^{s,\mathcal{S}}$ as the completion of $(D(\mathcal{S}),\|\cdot\|_{\dot{B}_{X,1}^{s,\mathcal{S}}})$, and $\dot{B}_{X,1}^{-s,\mathcal{S}}$ as the completion of $(R(\mathcal{S}),\|\cdot\|_{\dot{B}_{X,1}^{-s,\mathcal{S}}})$.
\end{definition}
The space $\dot{B}_{X,1}^{s,\mathcal{S}}$ can also be defined via interpolation (see {\cite[Remark~2.4]{giga jfa 1991}}), but we do not need this fact in this paper.

For notational convenience, we temporarily denote $\dot{B}_{X,1}^{\pm s,\mathcal{S}}$ by $\dot{B}^{\pm s}$. But we shall not use the abbreviated notations if the norms are associated with different operators.
\begin{lemma}
For every $k\in\mathbb{N}\cup\{0\}$ and $s\in(0,2)$, there exists a constant $C$ depending on $s$ and $k$ such that
\begin{align}\label{uniform bounded in B positive s}
\sup_{t>0}\|(t\mathcal{S})^ke^{t\mathcal{S}}x\|_{\dot{B}^{s}}\le C\|x\|_{\dot{B}^{s}},\ \ \forall x\in D(\mathcal{S}),
\end{align}
\begin{align}\label{uniform bounded in B negative s}
\sup_{t>0}\|(t\mathcal{S})^ke^{t\mathcal{S}}x\|_{\dot{B}^{-s}}\le C\|x\|_{\dot{B}^{-s}},\ \ \forall x\in R(\mathcal{S}),
\end{align}
\begin{align}\label{global in time bounded in B positive s}
\left\|\|(t\mathcal{S})^{k+1} e^{t\mathcal{S}}x\|_{\dot{B}^{s}}\right\|_{L^1(\R_+,\frac{dt}{t})}\le C\|x\|_{\dot{B}^{s}},\ \ x\in D(\mathcal{S}),
\end{align}
and
\begin{align}\label{global in time bounded in B negative s}
\left\|\|(t\mathcal{S})^{k+1} e^{t\mathcal{S}}x\|_{\dot{B}^{-s}}\right\|_{L^1(\R_+,\frac{dt}{t})}\le C\|x\|_{\dot{B}^{-s}},\ \ x\in R(\mathcal{S}).
\end{align}
\end{lemma}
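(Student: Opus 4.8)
The plan is to deduce all four inequalities from two elementary ingredients, together with a couple of changes of variable and Fubini's theorem. The first ingredient is the family of analyticity bounds $\|(\tau\mathcal{S})^{m}e^{\tau\mathcal{S}}\|_{\mathscr{L}(X)}\le C_m$ for every $m\in\mathbb{N}$, obtained from \eqref{real analyticity criterion} by writing $(\tau\mathcal{S})^{m}e^{\tau\mathcal{S}}=\bigl((\tau/m)\mathcal{S}\,e^{(\tau/m)\mathcal{S}}\bigr)^{m}$, together with $M:=\sup_{\tau>0}\|e^{\tau\mathcal{S}}\|_{\mathscr{L}(X)}<\infty$; inserting a factor $e^{(\tau/2)\mathcal{S}}$ gives, for every $m\ge1$,
\[
\|(\tau\mathcal{S})^{m}e^{\tau\mathcal{S}}x\|\le C_m'\bigl\|(\tau/2)\mathcal{S}\,e^{(\tau/2)\mathcal{S}}x\bigr\|,\qquad
\|\mathcal{S}^{m}e^{\tau\mathcal{S}}x\|\le C_m'\,\tau^{-m}\bigl\|e^{(\tau/2)\mathcal{S}}x\bigr\|,
\]
so the number of "derivatives" in the seminorm defining $\dot B^{s}$ is immaterial as long as it is $\ge1$. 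The second ingredient is a pair of pointwise embedding estimates,
\[
\sup_{\tau>0}\tau^{-s/2}\|\tau\mathcal{S}e^{\tau\mathcal{S}}x\|\le C\|x\|_{\dot B^{s}}\ \ (x\in D(\mathcal{S})),\qquad
\sup_{\tau>0}\tau^{s/2}\|e^{\tau\mathcal{S}}x\|\le C\|x\|_{\dot B^{-s}}\ \ (x\in R(\mathcal{S})).
\]
For the first of these I would observe that, for $\rho\in[\tau/2,\tau]$, the semigroup law gives $\tau\mathcal{S}e^{\tau\mathcal{S}}x=\tfrac{\tau}{\rho}\,e^{(\tau-\rho)\mathcal{S}}\bigl(\rho\mathcal{S}e^{\rho\mathcal{S}}x\bigr)$, hence $\|\tau\mathcal{S}e^{\tau\mathcal{S}}x\|\le 2M\|\rho\mathcal{S}e^{\rho\mathcal{S}}x\|$; averaging over $\rho\in[\tau/2,\tau]$ and using $\tau^{-s/2}\le\rho^{-s/2}$ and $\tau^{-1}\le\rho^{-1}$ there yields $\tau^{-s/2}\|\tau\mathcal{S}e^{\tau\mathcal{S}}x\|\le 4M\int_{\tau/2}^{\tau}\rho^{-s/2}\|\rho\mathcal{S}e^{\rho\mathcal{S}}x\|\,\tfrac{d\rho}{\rho}\le 4M\|x\|_{\dot B^{s}}$. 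The second is proved the same way, using that $\tau\mapsto\|e^{\tau\mathcal{S}}x\|$ is essentially nonincreasing. Everything below is done for $x\in D(\mathcal{S})$, resp. $x\in R(\mathcal{S})$, on which the relevant seminorm is already finite, so no completion argument is needed.

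I would then dispatch the global-in-time bounds \eqref{global in time bounded in B positive s} and \eqref{global in time bounded in B negative s} by Fubini. Unfolding the inner seminorm and using the semigroup law,
\[
\|(t\mathcal{S})^{k+1}e^{t\mathcal{S}}x\|_{\dot B^{s}}=\int_0^{\infty}\tau^{-s/2}\,\frac{t^{k+1}}{(t+\tau)^{k+2}}\,\bigl\|((t+\tau)\mathcal{S})^{k+2}e^{(t+\tau)\mathcal{S}}x\bigr\|\,d\tau;
\]
integrating against $dt/t$, substituting $\sigma=t+\tau$ in the $t$-integral, and interchanging the order of integration turns the left-hand side of \eqref{global in time bounded in B positive s} into
\[
\Bigl(\int_0^{1}u^{-s/2}(1-u)^{k}\,du\Bigr)\int_0^{\infty}\sigma^{-s/2}\bigl\|(\sigma\mathcal{S})^{k+2}e^{\sigma\mathcal{S}}x\bigr\|\,\tfrac{d\sigma}{\sigma},
\]
where the Beta-type constant is finite precisely because $s/2<1$, and the remaining integral is $\lesssim\|x\|_{\dot B^{s}}$ by the first display above (plus rescaling). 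Estimate \eqref{global in time bounded in B negative s} is handled identically: unfold the $\dot B^{-s}$-seminorm, bound $\|\mathcal{S}^{k+1}e^{(t+\tau)\mathcal{S}}x\|\lesssim(t+\tau)^{-(k+1)}\|e^{((t+\tau)/2)\mathcal{S}}x\|$, substitute $\sigma=(t+\tau)/2$, apply Fubini, and recognize the Beta integral $\int_0^{1}v^{k}(1-v)^{s/2-1}\,dv$, finite because $s/2>0$.

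For the uniform-in-time bounds \eqref{uniform bounded in B positive s} and \eqref{uniform bounded in B negative s} the same unfolding applies, but Fubini is replaced by a one-variable splitting. For \eqref{uniform bounded in B positive s}, after substituting $\sigma=t+\tau$, using $t^{k}\le\sigma^{k}$, and applying the first display above, one reaches
\[
\|(t\mathcal{S})^{k}e^{t\mathcal{S}}x\|_{\dot B^{s}}\le C\int_{t/2}^{\infty}\bigl(\sigma-\tfrac t2\bigr)^{-s/2}\,\sigma^{-1}\,\|\sigma\mathcal{S}e^{\sigma\mathcal{S}}x\|\,d\sigma,
\]
which I would split at $\sigma=t$: on $\{\sigma\ge t\}$ one has $(\sigma-\tfrac t2)^{-s/2}\le C\sigma^{-s/2}$, so that piece is $\le C\|x\|_{\dot B^{s}}$ directly; on $\{\tfrac t2\le\sigma\le t\}$ one uses $\int_{t/2}^{t}(\sigma-\tfrac t2)^{-s/2}\,d\sigma\simeq t^{1-s/2}$ (finite since $s/2<1$) together with the first pointwise embedding estimate, which bounds $\sup_{\sigma\in[t/2,t]}\sigma^{-s/2}\|\sigma\mathcal{S}e^{\sigma\mathcal{S}}x\|$ by $C\|x\|_{\dot B^{s}}$, after which the powers of $t$ cancel. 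Estimate \eqref{uniform bounded in B negative s} is the mirror image: unfold the $\dot B^{-s}$-seminorm, use $\|\mathcal{S}^{k}e^{\sigma\mathcal{S}}x\|\lesssim\sigma^{-k}\|e^{(\sigma/2)\mathcal{S}}x\|$ for $k\ge1$ (and nothing when $k=0$), substitute $\sigma=(t+\tau)/2$ (resp.\ $\sigma=t+\tau$ when $k=0$), split the $\sigma$-integral at an endpoint comparable to $t$, and control the near-diagonal piece with the second pointwise embedding estimate, now invoking $s/2>0$ for local integrability.

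The computations are all elementary; I expect the only genuinely delicate points to be bookkeeping ones — keeping the two changes of variable ($\sigma=t+\tau$ versus $\sigma=(t+\tau)/2$) and the Beta integrals straight, and tracking where $s\in(0,2)$ enters: the bound $s<2$ makes the weight $\tau^{-s/2}$ integrable near $0$ and the constants $\int_0^{1}u^{-s/2}(1-u)^{k}\,du$ finite, while $s>0$ does the same for the weight $\tau^{s/2-1}$ and the constants $\int_0^{1}v^{k}(1-v)^{s/2-1}\,dv$ of the negative-regularity case. The one place where a small idea rather than a bare estimate is required is the pair of pointwise embedding inequalities, whose proof rests on the dyadic-averaging trick described in the first paragraph.
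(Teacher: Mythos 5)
Your proof is correct, and for the global-in-time bounds \eqref{global in time bounded in B positive s} and \eqref{global in time bounded in B negative s} it takes essentially the same Fubini route as the paper; the only cosmetic difference is that you run the Fubini computation for general $k$ directly, whereas the paper first reduces to $k=0$ using \eqref{uniform bounded in B positive s} (or \eqref{uniform bounded in B negative s}) and then does the two-variable computation only once. Where your route diverges noticeably is in the uniform bounds \eqref{uniform bounded in B positive s}--\eqref{uniform bounded in B negative s}: you unfold the seminorm, substitute $\sigma=(t+\tau)/2$, split the $\sigma$-integral at the diagonal, and invoke the dyadic-averaging embeddings $\dot{B}^{\pm s}_{X,1}\hookrightarrow\dot{B}^{\pm s}_{X,\infty}$ which you flag as the delicate step. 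The paper dismisses these two inequalities in a single sentence, and the reason is plain commutativity: $(t\mathcal{S})^ke^{t\mathcal{S}}$ is bounded on $X$ uniformly in $t>0$ by analyticity, and it commutes with $\tau\mathcal{S}e^{\tau\mathcal{S}}$ (resp.\ with $e^{\tau\mathcal{S}}$), so that
\[
\|\tau\mathcal{S}e^{\tau\mathcal{S}}(t\mathcal{S})^ke^{t\mathcal{S}}x\|=\|(t\mathcal{S})^ke^{t\mathcal{S}}\,\tau\mathcal{S}e^{\tau\mathcal{S}}x\|\le C_k\|\tau\mathcal{S}e^{\tau\mathcal{S}}x\|,
\]
and integrating against $\tau^{-s/2}\,\tfrac{d\tau}{\tau}$ yields \eqref{uniform bounded in B positive s} at once (and likewise \eqref{uniform bounded in B negative s} with $e^{\tau\mathcal{S}}$ in place of $\tau\mathcal{S}e^{\tau\mathcal{S}}$). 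In particular, the pointwise embedding estimates you single out as the ``one place where a small idea is required'' are not needed anywhere in the argument. Nothing you wrote is wrong — it is just that for \eqref{uniform bounded in B positive s} and \eqref{uniform bounded in B negative s} you re-derived through the seminorm what can simply be pulled out of the integrand.
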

\begin{proof}
The first two inequalities follow immediately from the definitions of the norms and the analyticity of $\mathcal{S}$.

The proofs for \eqref{global in time bounded in B positive s} and \eqref{global in time bounded in B negative s} are similar, so let us only prove \eqref{global in time bounded in B positive s}. In view of \eqref{uniform bounded in B positive s}, we only need to prove \eqref{global in time bounded in B positive s} for $k=0$. Applying Fubini's theorem, we have
\begin{align*}
\int_0^\infty\|\tau\mathcal{S}e^{\tau\mathcal{S}}x\|_{\dot{B}^{s}}\,\frac{d\tau}{\tau}=&\int_0^\infty\int_{0}^{\infty}t^{-s/2}\|\mathcal{S}^2e^{(t+\tau)\mathcal{S}}x\| \,dt\,d\tau\\
=&\int_0^\infty\int_{\tau}^{\infty}(t-\tau)^{-s/2}\|\mathcal{S}^2e^{t\mathcal{S}}x\|\,dt\,d\tau\\
=&\int_0^\infty\|\mathcal{S}^2e^{t\mathcal{S}}x\|\,dt\int_{0}^{t}(t-\tau)^{-s/2}\,d\tau\\
=&\frac{2}{2-s}\int_0^\infty t^{-s/2}\|t\mathcal{S}^2e^{t\mathcal{S}}x\|\,dt.
\end{align*}
Finally, by the analyticity of $\mathcal{S}$, we end up with
\begin{align*}
\le C\int_0^\infty t^{-s/2}\|\mathcal{S}e^{t\mathcal{S}}x\| \,dt
= C\|x\|_{\dot{B}^{s}}.
\end{align*}
This completes the proof.
\end{proof}

The inequality \eqref{uniform bounded in B positive s} (with $k=0$) guarantees that $e^{t\mathcal{S}}|_{D(\mathcal{S})}$ extends to a bounded operator on $\dot{B}^{s}$ with bounds uniform in $t$. Denote this extension by $\mathcal{T}_{s}(t)$. Then $\{\mathcal{T}_{s}(t)\}_{t\ge0}$ is a bounded semigroup on $\dot{B}^{s}$. Similarly, \eqref{uniform bounded in B negative s} implies that $e^{t\mathcal{S}}$ also extrapolates to a bounded semigroup $\{\mathcal{T}_{-s}(t)\}_{t\ge0}$ on $\dot{B}^{-s}$. In fact, both semigroups are strongly continuous.
\begin{lemma}
$\{\mathcal{T}_{s}(t)\}_{t\ge0}$ (resp., $\{\mathcal{T}_{-s}(t)\}_{t\ge0}$) is a bounded $C_0$ semigroup on $\dot{B}^{s}$ (resp., $\dot{B}^{-s}$).
\end{lemma}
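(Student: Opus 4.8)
The plan is to prove only the strong continuity at $t=0$, since the remaining content of the statement is already in hand: the uniform bounds \eqref{uniform bounded in B positive s} and \eqref{uniform bounded in B negative s} (with $k=0$) give $\sup_{t>0}\|\mathcal{T}_{\pm s}(t)\|\le C$, and the semigroup identities $\mathcal{T}_{\pm s}(t+\tau)=\mathcal{T}_{\pm s}(t)\mathcal{T}_{\pm s}(\tau)$, $\mathcal{T}_{\pm s}(0)=I$ hold on the dense subspaces $D(\mathcal{S})$ and $R(\mathcal{S})$ — being inherited from $e^{t\mathcal{S}}$, which leaves $D(\mathcal{S})$ and $R(\mathcal{S})$ invariant — and then extend to the completions $\dot{B}^{\pm s}$ by density and continuity. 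By a $3\varepsilon$ argument using $\sup_{t>0}\|\mathcal{T}_{\pm s}(t)\|\le C$, strong continuity at $0$ on all of $\dot{B}^{\pm s}$ follows once it is known on the dense subspaces. Thus it suffices to show $\|e^{t\mathcal{S}}x-x\|_{\dot{B}^{s}}\to 0$ as $t\to 0^{+}$ for $x\in D(\mathcal{S})$, and $\|e^{t\mathcal{S}}x-x\|_{\dot{B}^{-s}}\to 0$ as $t\to 0^{+}$ for $x\in R(\mathcal{S})$.

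For $x\in D(\mathcal{S})$ I would write
\[
\|e^{t\mathcal{S}}x-x\|_{\dot{B}^{s}}=\int_{0}^{\infty}\tau^{-s/2}\,\bigl\|\tau\mathcal{S}e^{\tau\mathcal{S}}\bigl(e^{t\mathcal{S}}x-x\bigr)\bigr\|\,\frac{d\tau}{\tau}
\]
and pass to the limit $t\to 0^{+}$ under the integral sign by dominated convergence. For each fixed $\tau>0$, since $\mathcal{S}$ commutes with $e^{t\mathcal{S}}$ on $D(\mathcal{S})$ and $\tau\mathcal{S}e^{\tau\mathcal{S}}x\in X$, the strong continuity of $e^{t\mathcal{S}}$ on $X$ gives
\[
\tau\mathcal{S}e^{\tau\mathcal{S}}\bigl(e^{t\mathcal{S}}x-x\bigr)=e^{t\mathcal{S}}\bigl(\tau\mathcal{S}e^{\tau\mathcal{S}}x\bigr)-\tau\mathcal{S}e^{\tau\mathcal{S}}x\longrightarrow 0\quad\text{in }X,
\]
so the integrand tends to $0$ pointwise in $\tau$. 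For the majorant, setting $M=\sup_{t\ge 0}\|e^{t\mathcal{S}}\|$ and using the analyticity bound \eqref{real analyticity criterion} together with $\mathcal{S}e^{\tau\mathcal{S}}z=e^{\tau\mathcal{S}}\mathcal{S}z$ for $z\in D(\mathcal{S})$, one obtains for all $t\ge 0$
\[
\bigl\|\tau\mathcal{S}e^{\tau\mathcal{S}}\bigl(e^{t\mathcal{S}}x-x\bigr)\bigr\|\le C\bigl(\|e^{t\mathcal{S}}x\|\wedge\tau\|\mathcal{S}e^{t\mathcal{S}}x\|\bigr)+C\bigl(\|x\|\wedge\tau\|\mathcal{S}x\|\bigr)\le C(1+M)\bigl(\|x\|\wedge\tau\|\mathcal{S}x\|\bigr),
\]
and the function $\tau\mapsto\tau^{-s/2}\bigl(\|x\|\wedge\tau\|\mathcal{S}x\|\bigr)$ is readily checked to lie in $L^{1}(\R_+,\frac{d\tau}{\tau})$ precisely because $s\in(0,2)$ (it behaves like $\tau^{1-s/2}$ near $0$ and like $\tau^{-s/2}$ near $\infty$). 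Dominated convergence then yields $\|e^{t\mathcal{S}}x-x\|_{\dot{B}^{s}}\to 0$, as desired.

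The negative-regularity case is entirely parallel. For $x\in R(\mathcal{S})$ pick $y\in D(\mathcal{S})$ with $x=\mathcal{S}y$, and write
\[
\|e^{t\mathcal{S}}x-x\|_{\dot{B}^{-s}}=\int_{0}^{\infty}\tau^{s/2}\,\bigl\|e^{\tau\mathcal{S}}\bigl(e^{t\mathcal{S}}x-x\bigr)\bigr\|\,\frac{d\tau}{\tau};
\]
the integrand again converges to $0$ pointwise in $\tau$ by strong continuity of $e^{t\mathcal{S}}$ on $X$, and the domination uses $\|e^{\tau\mathcal{S}}x\|=\|\mathcal{S}e^{\tau\mathcal{S}}y\|\le C\bigl(\|x\|\wedge\tau^{-1}\|y\|\bigr)$ — from \eqref{real analyticity criterion} and $\|e^{\tau\mathcal{S}}\|\le M$ — together with the same bound for $e^{t\mathcal{S}}x$ in place of $x$ with a constant independent of $t$; the resulting majorant $\tau\mapsto\tau^{s/2}\bigl(\|x\|\wedge\tau^{-1}\|y\|\bigr)$ lies in $L^{1}(\R_+,\frac{d\tau}{\tau})$ again exactly when $s\in(0,2)$. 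The one genuinely non-formal step here is the construction of these $t$-uniform integrable majorants, and it is what pins down the admissible range $s\in(0,2)$; once it is in place, everything else follows from density and the uniform bounds already established in the preceding lemma.
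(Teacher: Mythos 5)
Your proof is correct, but it takes a more computational route than the paper at the step that actually requires work. Both arguments agree on the reduction: uniform boundedness of $\mathcal{T}_{\pm s}(t)$ together with the semigroup property reduces strong continuity to continuity at $t=0$ on a dense subspace. Where you diverge is in how that continuity on $D(\mathcal{S})$ (resp. $R(\mathcal{S})$) is established. The paper simply notes that $t\mapsto e^{t\mathcal{S}}x$ lies in $C([0,\infty);D(\mathcal{S}))$ for $x\in D(\mathcal{S})$ (a standard semigroup fact, since $\mathcal{S}e^{t\mathcal{S}}x=e^{t\mathcal{S}}\mathcal{S}x$ is continuous in $X$), and then appeals to the continuous embedding $D(\mathcal{S})\hookrightarrow\dot{B}^{s}$ already recorded when the norms were introduced (namely $\|x\|_{\dot{B}^{s}}\lesssim\|x\|_{D(\mathcal{S})}$); the $\dot{B}^{-s}$ case goes via $R(\mathcal{S})\hookrightarrow\dot{B}^{-s}$ in exactly the same way. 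You instead unwind the $\dot{B}^{\pm s}$-norm as an integral and run a dominated convergence argument, with a $t$-uniform integrable majorant produced from $\|x\|\wedge\tau\|\mathcal{S}x\|$ (resp. $\|x\|\wedge\tau^{-1}\|y\|$); the required integrability is exactly the condition $s\in(0,2)$. This is correct and makes the role of $s\in(0,2)$ very explicit, but it essentially reproves the embedding $D(\mathcal{S})\hookrightarrow\dot{B}^{s}$ from scratch inside the dominated convergence step, rather than citing it. The paper's version is therefore shorter and cleaner, while yours has the pedagogical virtue of showing precisely where the exponent restriction enters.
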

\begin{proof}
For $x\in D(\mathcal{S})$, the function $t\mapsto\mathcal{T}_{s}(t)x=e^{t\mathcal{S}}x$ belongs to $C([0,\infty);D(\mathcal{S}))$, hence $C([0,\infty);\dot{B}^{s})$ since $D(\mathcal{S})\hookrightarrow\dot{B}^{s}$. One can easily get the strong continuity of $\mathcal{T}_{s}(t)$ on $\dot{B}^{s}$ by a density argument.

The strong continuity of $\mathcal{T}_{-s}(t)$ on $\dot{B}^{-s}$ can be proved analogously.
\end{proof}

Let us denote by $\mathcal{G}_{s}$ and $\mathcal{G}_{-s}$ the generators of $\mathcal{T}_{s}(t)$ and $\mathcal{T}_{-s}(t)$, respectively. In general, it is not easy to identify the domain of the generator of a semigroup. However, it would be easier to find a core for the generator.

\begin{lemma}\label{G is a closure of S}
{\rm (\romannumeral1)} The domain $D(\mathcal{S}^2)$ of $\mathcal{S}^2$ is a core for $\mathcal{G}_s$, and it holds that $\mathcal{G}|_{D(\mathcal{S}^2)}=\mathcal{S}|_{D(\mathcal{S}^2)}$, that is, $\mathcal{G}_{s}$ is the closure of $\mathcal{S}:D(\mathcal{S}^2)\subset\Dot{B}^s\rightarrow\Dot{B}^s$.

{\rm (\romannumeral2)} $\mathcal{G}_{-s}$ is the closure of $\mathcal{S}:D(\mathcal{S})\cap R(\mathcal{S})\subset\Dot{B}^{-s}\rightarrow\Dot{B}^{-s}$.
\end{lemma}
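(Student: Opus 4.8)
The plan is to apply the core criterion of Lemma~\ref{identify a core} in each of the Banach spaces $\dot{B}^{s}$ and $\dot{B}^{-s}$, with the prospective core being $D(\mathcal{S}^2)$ for part~(i) and $D(\mathcal{S})\cap R(\mathcal{S})$ for part~(ii). In both cases three facts must be checked: that the prospective core is contained in the domain of the generator $\mathcal{G}_{\pm s}$ and that the generator acts on it exactly by $\mathcal{S}$; that it is dense in the ambient space; and that it is invariant under the extrapolated semigroup $\mathcal{T}_{\pm s}(t)$.

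For part~(i), invariance $\mathcal{T}_{s}(t)D(\mathcal{S}^2)\subseteq D(\mathcal{S}^2)$ is immediate, since $e^{t\mathcal{S}}$ leaves $D(\mathcal{S})$ and $D(\mathcal{S}^2)$ invariant and commutes with $\mathcal{S}$. For density, since $\dot{B}^{s}$ is by definition the completion of $(D(\mathcal{S}),\|\cdot\|_{\dot{B}^{s}})$, it suffices to approximate each $x\in D(\mathcal{S})$ in $\dot{B}^{s}$ by elements of $D(\mathcal{S}^2)$; the smoothing of the analytic semigroup gives $e^{\tau\mathcal{S}}x\in D(\mathcal{S}^2)$ for $\tau>0$, while $e^{\tau\mathcal{S}}x\to x$ in the graph norm of $\mathcal{S}$, hence in $\dot{B}^{s}$ by the continuous inclusion $D(\mathcal{S})\hookrightarrow\dot{B}^{s}$ noted above. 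Finally, to see $D(\mathcal{S}^2)\subseteq D(\mathcal{G}_s)$ with $\mathcal{G}_s x=\mathcal{S}x$, I use the identity $h^{-1}(e^{h\mathcal{S}}x-x)=h^{-1}\int_0^{h}e^{\tau\mathcal{S}}\mathcal{S}x\,d\tau$, valid in $X$ for $x\in D(\mathcal{S})$: for $x\in D(\mathcal{S}^2)$ one has $\mathcal{S}x\in D(\mathcal{S})\hookrightarrow\dot{B}^{s}$ and $\tau\mapsto e^{\tau\mathcal{S}}\mathcal{S}x$ is continuous into $D(\mathcal{S})$, hence into $\dot{B}^{s}$, so the average converges to $\mathcal{S}x$ in $\dot{B}^{s}$ as $h\to0^{+}$. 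Lemma~\ref{identify a core} then shows $D(\mathcal{S}^2)$ is a core for $\mathcal{G}_s$, and together with $\mathcal{G}_s|_{D(\mathcal{S}^2)}=\mathcal{S}|_{D(\mathcal{S}^2)}$ this is precisely the assertion that $\mathcal{G}_s$ is the closure of $\mathcal{S}\colon D(\mathcal{S}^2)\subset\dot{B}^{s}\to\dot{B}^{s}$.

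Part~(ii) proceeds identically with $R(\mathcal{S})$ in place of $D(\mathcal{S})$: $R(\mathcal{S})$ is dense in $\dot{B}^{-s}$ by definition and embeds continuously into it, and $e^{t\mathcal{S}}$ preserves both $D(\mathcal{S})$ and $R(\mathcal{S})$ (if $x=\mathcal{S}y$ then $e^{t\mathcal{S}}x=\mathcal{S}(e^{t\mathcal{S}}y)$), so $D(\mathcal{S})\cap R(\mathcal{S})$ is $\mathcal{T}_{-s}(t)$-invariant. For density, one approximates $x\in R(\mathcal{S})$ by $e^{\tau\mathcal{S}}x\in D(\mathcal{S})\cap R(\mathcal{S})$, using that $e^{\tau\mathcal{S}}x\to x$ in the norm $\|\cdot\|+\|\mathcal{S}^{-1}\cdot\|$ of $R(\mathcal{S})$ (here $\mathcal{S}^{-1}e^{\tau\mathcal{S}}x=e^{\tau\mathcal{S}}\mathcal{S}^{-1}x\to\mathcal{S}^{-1}x$ in $X$), hence in $\dot{B}^{-s}$. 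For the action of the generator the point is that for $x\in D(\mathcal{S})\cap R(\mathcal{S})$ one genuinely has $\mathcal{S}x\in\dot{B}^{-s}$: since $x\in D(\mathcal{S})$, $\mathcal{S}x\in R(\mathcal{S})\hookrightarrow\dot{B}^{-s}$; then the same difference-quotient identity, with $\tau\mapsto e^{\tau\mathcal{S}}\mathcal{S}x$ now continuous into $R(\mathcal{S})$ and hence into $\dot{B}^{-s}$, yields $\mathcal{G}_{-s}x=\mathcal{S}x$. Lemma~\ref{identify a core} then concludes part~(ii).

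The routine ingredients are the standard semigroup calculus and the two continuous embeddings $D(\mathcal{S})\hookrightarrow\dot{B}^{s}$ and $R(\mathcal{S})\hookrightarrow\dot{B}^{-s}$. The only genuine obstacle — and the reason the cores are taken to be $D(\mathcal{S}^2)$ and $D(\mathcal{S})\cap R(\mathcal{S})$ rather than $D(\mathcal{S})$ and $R(\mathcal{S})$ — is the identification $\mathcal{G}_{\pm s}x=\mathcal{S}x$: one must verify that the difference quotient converges \emph{in the new norm} $\|\cdot\|_{\dot{B}^{\pm s}}$, not merely in $\|\cdot\|$, and this requires $\mathcal{S}x$ to lie in $D(\mathcal{S})$ (resp.\ $R(\mathcal{S})$), which carries both a continuous embedding into $\dot{B}^{s}$ (resp.\ $\dot{B}^{-s}$) and a strongly continuous semigroup orbit there.
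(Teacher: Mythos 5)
Your proposal is correct and follows essentially the same route as the paper: density of $D(\mathcal{S}^2)$ (resp.\ $D(\mathcal{S})\cap R(\mathcal{S})$) via approximation by $e^{\tau\mathcal{S}}x$ in the graph norm of $D(\mathcal{S})$ (resp.\ the norm of $R(\mathcal{S})$), identification of the generator through the difference-quotient identity converging in the embedded space, invariance under the semigroup, and then Lemma~\ref{identify a core}. You merely spell out the embeddings and convergence arguments in slightly more detail than the paper does.
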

\begin{proof}
Note that $D(\mathcal{S}^2)$ is dense in $\dot{B}^{s}$ since $D(\mathcal{S}^2)$ is dense in $D(\mathcal{S})$ and $D(\mathcal{S})$ is dense in $\dot{B}^{s}$. For every $x\in D(\mathcal{S}^2)$, we have
\begin{align*}
\frac1t (\mathcal{T}_{s}(t)x-x)=\frac1t\int_0^t\mathcal{T}_{s}(\tau)\mathcal{S}x\,d\tau.
\end{align*}
Letting $t\rightarrow0^+$, the right side converges to $\mathcal{S}x$ in $D(\mathcal{S})$, thus, in $\Dot{B}^s$. We infer that $D(\mathcal{S}^2)\subset D(\mathcal{G}_{s})$ and $\mathcal{G}_{s}|_{D(\mathcal{S}^2)}=\mathcal{S}|_{D(\mathcal{S}^2)}$. Obviously, $D(\mathcal{S}^2)$ is invariant under $\mathcal{T}_{s}(t)$. Thus, by Lemma \ref{identify a core}, $D(\mathcal{S}^2)$ is a core for $\mathcal{G}_{s}$.

We prove the second part along the lines of the above proof. First, $D(\mathcal{S})\cap R(\mathcal{S})$ is dense in $\dot{B}^{-s}$ since $D(\mathcal{S})\cap R(\mathcal{S})$ is dense in $(R(\mathcal{S}),\|\cdot\|_{R(\mathcal{S})})$ and $R(\mathcal{S})$ is dense in $\dot{B}^{-s}$. Next, we can show that $D(\mathcal{S})\cap R(\mathcal{S})\subset D(\mathcal{G}_{-s})$ and $\mathcal{G}_{-s}|_{D(\mathcal{S})\cap R(\mathcal{S})}=\mathcal{S}|_{D(\mathcal{S})\cap R(\mathcal{S})}$. Moreover, since $D(\mathcal{S})\cap R(\mathcal{S})$ is invariant under $\mathcal{T}_{-s}(t)$, so it is a core for $\mathcal{G}_{-s}$. This completes the proof.
\end{proof}

\begin{lemma}\label{T is a bounded analytic semigroup}
$\{\mathcal{T}_{s}(t)\}_{t\ge0}$ (resp., $\{\mathcal{T}_{-s}(t)\}_{t\ge0}$) is a bounded analytic semigroup on $\dot{B}^{s}$ (resp., $\dot{B}^{-s}$).
\end{lemma}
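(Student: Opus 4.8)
The plan is to verify the analyticity criterion \eqref{real analyticity criterion} for the semigroups $\mathcal{T}_s(t)$ and $\mathcal{T}_{-s}(t)$, namely that $\sup_{t>0}\|t\mathcal{G}_{\pm s}\mathcal{T}_{\pm s}(t)x\|_{\dot{B}^{\pm s}}\lesssim\|x\|_{\dot{B}^{\pm s}}$. Since both semigroups are already known to be bounded $C_0$ semigroups (by the previous two lemmas), and since, by the Remark following the definition of a bounded analytic semigroup, it suffices to check this estimate on a dense subspace, I would work on the cores identified in Lemma \ref{G is a closure of S}: $D(\mathcal{S}^2)$ for the positive-regularity case and $D(\mathcal{S})\cap R(\mathcal{S})$ for the negative-regularity case.

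For the $\dot{B}^s$ case, take $x\in D(\mathcal{S}^2)$. Then $\mathcal{T}_s(t)x=e^{t\mathcal{S}}x$ and $\mathcal{G}_s\mathcal{T}_s(t)x=\mathcal{S}e^{t\mathcal{S}}x$ (the latter being valid because $e^{t\mathcal{S}}x$ stays in $D(\mathcal{S}^2)$ and $\mathcal{G}_s$ agrees with $\mathcal{S}$ there). Hence $t\mathcal{G}_s\mathcal{T}_s(t)x=(t\mathcal{S})e^{t\mathcal{S}}x$, and what must be bounded is $\sup_{t>0}\|(t\mathcal{S})e^{t\mathcal{S}}x\|_{\dot{B}^s}$, which is precisely the content of \eqref{uniform bounded in B positive s} with $k=1$. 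So the positive case is essentially immediate from the estimates already proved. For the $\dot{B}^{-s}$ case, take $x\in D(\mathcal{S})\cap R(\mathcal{S})$; by Lemma \ref{G is a closure of S}(ii), $\mathcal{G}_{-s}\mathcal{T}_{-s}(t)x=\mathcal{S}e^{t\mathcal{S}}x$, so $t\mathcal{G}_{-s}\mathcal{T}_{-s}(t)x=(t\mathcal{S})e^{t\mathcal{S}}x$, and one needs $\sup_{t>0}\|(t\mathcal{S})e^{t\mathcal{S}}x\|_{\dot{B}^{-s}}\lesssim\|x\|_{\dot{B}^{-s}}$, which is \eqref{uniform bounded in B negative s} with $k=1$. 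Finally one must also check $\mathcal{T}_{\pm s}(t)x\in D(\mathcal{G}_{\pm s})$ for \emph{all} $x$ in the respective space, not just on the core; this follows because the estimate just proved shows $\mathcal{S}e^{t\mathcal{S}}$, initially defined on the core, extends boundedly from $\dot{B}^{\pm s}$ into itself for each fixed $t>0$, and the closedness of $\mathcal{G}_{\pm s}$ together with a density/approximation argument identifies this extension with $\mathcal{G}_{\pm s}\mathcal{T}_{\pm s}(t)$.

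The only genuine subtlety — and the step I would be most careful about — is the bookkeeping for $\mathcal{G}_{-s}$: one must confirm that for $x\in D(\mathcal{S})\cap R(\mathcal{S})$ the vector $e^{t\mathcal{S}}x$ again lies in $D(\mathcal{S})\cap R(\mathcal{S})$ (it does: $D(\mathcal{S})$ and $R(\mathcal{S})$ are each invariant under $e^{t\mathcal{S}}$, the latter because $e^{t\mathcal{S}}$ commutes with $\mathcal{S}$ and hence with $\mathcal{S}^{-1}$ on $R(\mathcal{S})$), so that the identification $\mathcal{G}_{-s}e^{t\mathcal{S}}x=\mathcal{S}e^{t\mathcal{S}}x$ is legitimate, and then to pass from the core to the whole space one invokes that $\mathcal{G}_{-s}$ is closed and that $e^{t\mathcal{S}}$ maps $\dot{B}^{-s}$ into $D(\mathcal{G}_{-s})$ with the stated bound. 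Everything else is a direct citation of \eqref{uniform bounded in B positive s}, \eqref{uniform bounded in B negative s} and the Remark permitting verification of \eqref{real analyticity criterion} on a dense subspace.

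In short, the proof is: (1) recall $\mathcal{T}_{\pm s}(t)$ is a bounded $C_0$ semigroup with generator $\mathcal{G}_{\pm s}$; (2) on the core ($D(\mathcal{S}^2)$ or $D(\mathcal{S})\cap R(\mathcal{S})$), compute $t\mathcal{G}_{\pm s}\mathcal{T}_{\pm s}(t)x=(t\mathcal{S})e^{t\mathcal{S}}x$; (3) apply \eqref{uniform bounded in B positive s}/\eqref{uniform bounded in B negative s} with $k=1$ to bound this by $C\|x\|_{\dot{B}^{\pm s}}$; (4) extend by density and closedness to obtain $\mathcal{T}_{\pm s}(t)x\in D(\mathcal{G}_{\pm s})$ for all $x$ with the uniform bound \eqref{real analyticity criterion}; (5) conclude via the definition of a bounded analytic semigroup.
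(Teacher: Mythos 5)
Your proposal is correct and follows essentially the same route as the paper: verify \eqref{real analyticity criterion} on the cores $D(\mathcal{S}^2)$ (resp. $D(\mathcal{S})\cap R(\mathcal{S})$) via Lemma \ref{G is a closure of S}, apply \eqref{uniform bounded in B positive s}/\eqref{uniform bounded in B negative s}, and conclude by the density/closedness remark after the definition of bounded analytic semigroup. Your extra bookkeeping (invariance of the core under $e^{t\mathcal{S}}$) is exactly what the paper's appeal to Lemma \ref{G is a closure of S} implicitly uses, so there is no substantive difference.
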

\begin{proof}
We know from Lemma \ref{G is a closure of S} that $D(\mathcal{S}^2)$ is dense in $\dot{B}^{s}$, and that $\mathcal{G}_{s}\mathcal{T}_{s}(t)x=\mathcal{S}e^{t\mathcal{S}}x$ for $x\in D(\mathcal{S}^2)$. It then follows from \eqref{uniform bounded in B positive s} that $\|t\mathcal{G}_{s}\mathcal{T}_{s}(t)x\|_{\dot{B}^{s}}\le C\|x\|_{\dot{B}^{s}}$ for every $t>0$. So $\mathcal{T}_{s}(t)$ is a bounded analytic semigroup. An analogous argument gives the analyticity of $\mathcal{T}_{-s}(t)$ on $\dot{B}^{-s}$.
\end{proof}

\begin{remark}
By Fatou's lemma, now \eqref{global in time bounded in B positive s} (resp., \eqref{global in time bounded in B negative s}) actually holds for data in $\dot{B}^{s}$ (resp., $\dot{B}^{-s}$). In particular, choosing $k=0$, we have
\begin{align}\label{maximal L1 regularity criterion positive s}
\|\mathcal{G}_{s} e^{t\mathcal{G}_{s}}x\|_{L^1(\R_+,\dot{B}^{s})}\le C\|x\|_{\dot{B}^{s}},\ \ \forall x\in\dot{B}^{s}
\end{align}
and
\begin{align}\label{maximal L1 regularity criterion negative s}
\|\mathcal{G}_{-s} e^{t\mathcal{G}_{-s}}x\|_{L^1(\R_+,\dot{B}^{-s})}\le C\|x\|_{\dot{B}^{-s}},\ \ \forall x\in\dot{B}^{-s}.
\end{align}
\end{remark}

Next, we take advantage of Lemma \ref{T is a bounded analytic semigroup}, \eqref{maximal L1 regularity criterion positive s} and \eqref{maximal L1 regularity criterion negative s} to obtain the maximal $L^1$ regularity for the abstract Cauchy problems
\begin{align}\label{ACP positive s}
u'(t)-\mathcal{G}_{s}u(t)=f(t),\ \ u(0)=x
\end{align}
and
\begin{align}\label{ACP negative s}
u'(t)-\mathcal{G}_{-s}u(t)=f(t),\ \ u(0)=x.
\end{align}

\begin{theorem}\label{Maximal regularity theorem associated with Gs}
Assume Assumption \ref{assume S is injective and generates bounded analytic semigroup}. Let $s\in(0,2)$ and $T\in(0,\infty]$. There exists a constant $C=C(s)$ such that

{\rm (\romannumeral1)} For any $x\in\dot{B}^{s}$ and $f\in L^1((0,T);\dot{B}^{s})$, the equation \eqref{ACP positive s} has a unique strong solution $u\in C([0,T);\dot{B}^{s})$ satisfying
\begin{align*}
\|u\|_{L_T^\infty(\dot{B}^{s})}+\|u',\mathcal{G}_su\|_{L_T^1(\dot{B}^{s})}\le C\|x\|_{\dot{B}^{s}}+C\|f\|_{L_T^1(\dot{B}^{s})}.
\end{align*}

{\rm (\romannumeral2)} For any $x\in\dot{B}^{-s}$ and $f\in L^1((0,T);\dot{B}^{-s})$, the equation \eqref{ACP negative s} has a unique strong solution $u\in C([0,T);\dot{B}^{-s})$ satisfying
\begin{align*}
\|u\|_{L_T^\infty(\dot{B}^{-s})}+\|u',\mathcal{G}_{-s}u\|_{L_T^1(\dot{B}^{-s})}\le C\|x\|_{\dot{B}^{-s}}+C\|f\|_{L_T^1(\dot{B}^{-s})}.
\end{align*}
\end{theorem}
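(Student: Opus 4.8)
The plan is to construct the solutions of \eqref{ACP positive s} and \eqref{ACP negative s} via the variation-of-constants formula for the semigroups $\mathcal{T}_{\pm s}(t)$, and then to verify the maximal $L^1$ bound term by term, exploiting the two ``smoothing'' estimates \eqref{maximal L1 regularity criterion positive s} and \eqref{maximal L1 regularity criterion negative s} that are already at our disposal. Since the two cases are proved identically, with $\mathcal{G}_{-s}$, $\dot{B}^{-s}$, $D(\mathcal{S})\cap R(\mathcal{S})$ and \eqref{maximal L1 regularity criterion negative s} replacing $\mathcal{G}_{s}$, $\dot{B}^{s}$, $D(\mathcal{S}^2)$ and \eqref{maximal L1 regularity criterion positive s}, I will only describe (i). By Lemma~\ref{T is a bounded analytic semigroup}, $\mathcal{G}_s$ generates the bounded analytic $C_0$ semigroup $\mathcal{T}_s(t)$ on the Banach space $\dot{B}^{s}$, so \eqref{ACP positive s} has a unique mild solution $u\in C([0,T);\dot{B}^{s})$ given by $u(t)=u_h(t)+u_p(t)$ with $u_h(t)=\mathcal{T}_s(t)x$ and $u_p(t)=\int_0^t\mathcal{T}_s(t-\tau)f(\tau)\,d\tau$. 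The uniform bound $\sup_{t>0}\|\mathcal{T}_s(t)\|_{\mathscr{L}(\dot{B}^{s})}<\infty$ already yields $\|u\|_{L_T^\infty(\dot{B}^{s})}\le C(\|x\|_{\dot{B}^{s}}+\|f\|_{L_T^1(\dot{B}^{s})})$, so everything reduces to estimating $\mathcal{G}_s u$ in $L_T^1(\dot{B}^{s})$.

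For the homogeneous part $\mathcal{G}_s u_h(t)=\mathcal{G}_s\mathcal{T}_s(t)x$, estimate \eqref{maximal L1 regularity criterion positive s} gives directly $\|\mathcal{G}_s u_h\|_{L^1(\R_+;\dot{B}^{s})}\le C\|x\|_{\dot{B}^{s}}$. For the inhomogeneous part I would invoke Tonelli's theorem together with the change of variables $\sigma=t-\tau$:
\begin{align*}
\int_0^T\int_0^t\|\mathcal{G}_s\mathcal{T}_s(t-\tau)f(\tau)\|_{\dot{B}^{s}}\,d\tau\,dt
&=\int_0^T\int_0^{T-\tau}\|\mathcal{G}_s\mathcal{T}_s(\sigma)f(\tau)\|_{\dot{B}^{s}}\,d\sigma\,d\tau\\
&\le C\int_0^T\|f(\tau)\|_{\dot{B}^{s}}\,d\tau,
\end{align*}
the last step being \eqref{maximal L1 regularity criterion positive s} applied, for a.e.\ fixed $\tau$, to $x=f(\tau)$; measurability of $(\tau,t)\mapsto\mathcal{G}_s\mathcal{T}_s(t-\tau)f(\tau)$ on $\{0<\tau<t<T\}$ is routine since $f$ is strongly measurable and $\sigma\mapsto\mathcal{G}_s\mathcal{T}_s(\sigma)$ is strongly continuous on $(0,\infty)$. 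In particular, for a.e.\ $t\in(0,T)$ the map $\tau\mapsto\mathcal{G}_s\mathcal{T}_s(t-\tau)f(\tau)$ is Bochner integrable on $(0,t)$; since also $\tau\mapsto\mathcal{T}_s(t-\tau)f(\tau)$ lies in $L^1((0,t);\dot{B}^{s})$ and $\mathcal{G}_s$ is closed, it follows that $u_p(t)\in D(\mathcal{G}_s)$ with $\mathcal{G}_s u_p(t)=\int_0^t\mathcal{G}_s\mathcal{T}_s(t-\tau)f(\tau)\,d\tau$ for a.e.\ $t$. The displayed computation then reads $\|\mathcal{G}_s u_p\|_{L_T^1(\dot{B}^{s})}\le C\|f\|_{L_T^1(\dot{B}^{s})}$.

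Combining the two pieces gives $\mathcal{G}_s u\in L^1((0,T);\dot{B}^{s})$, hence $u\in L^1((0,T);D(\mathcal{G}_s))$, so Lemma~\ref{mild to strong} (applied with generator $\mathcal{G}_s$, on bounded subintervals when $T=\infty$) shows that the mild solution $u$ is in fact a strong solution; then $u'=\mathcal{G}_su+f\in L^1((0,T);\dot{B}^{s})$, and collecting the bounds above yields the asserted estimate. Uniqueness is immediate: the difference of two strong solutions is a mild solution of the homogeneous problem with zero data, and the mild solution formula forces it to vanish.

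The only genuinely delicate point is the commutation $\mathcal{G}_s\int_0^t(\cdot)=\int_0^t\mathcal{G}_s(\cdot)$, that is, showing $u_p(t)\in D(\mathcal{G}_s)$ together with the formula for $\mathcal{G}_su_p(t)$, and I expect this to be the main obstacle. It is handled as indicated: the Tonelli estimate already provides, for a.e.\ $t$, the absolute convergence in $\dot{B}^{s}$ of the candidate integral $\int_0^t\mathcal{G}_s\mathcal{T}_s(t-\tau)f(\tau)\,d\tau$, after which the closedness of the generator (in the form ``a closed operator commutes with a Bochner integral whenever both integrands are integrable'') completes the argument; alternatively, one may first prove the bound for $f$ ranging over simple functions with values in the core $D(\mathcal{S}^2)$, for which $\mathcal{T}_s(t-\tau)f(\tau)\in D(\mathcal{G}_s)$ is transparent and the interchange is elementary, and then pass to the limit in $L_T^1(\dot{B}^{s})$ using the closedness of $\mathcal{G}_s$ and the uniform bound just obtained.
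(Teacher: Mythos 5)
Your proposal is correct and follows essentially the same route as the paper's proof: the same splitting into homogeneous and Duhamel parts, the same Fubini/Tonelli argument combined with \eqref{maximal L1 regularity criterion positive s} (resp.\ \eqref{maximal L1 regularity criterion negative s}), and the same upgrade from mild to strong solution via Lemma~\ref{mild to strong}. The extra details you supply (measurability, commuting the closed operator $\mathcal{G}_s$ with the Bochner integral, uniqueness) are correct elaborations of steps the paper leaves implicit.
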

\begin{proof}
Let us only give the proof of the first part. The homogeneous part $e^{t\mathcal{G}_s}x$ is a classical solution to the homogeneous equation, and satisfies the desired estimate by Lemma \ref{T is a bounded analytic semigroup} and \eqref{maximal L1 regularity criterion positive s}. Denote the inhomogeneous part by $\mathcal{I}f(t)=\int_0^t e^{(t-\tau)\mathcal{G}_s}f(\tau)\,d\tau$. Then it is easy to see that $\|\mathcal{I}f\|_{L_T^\infty(\dot{B}^{s})}\lesssim\|f\|_{L_T^1(\dot{B}^{s})}$. Using again \eqref{maximal L1 regularity criterion positive s} and Fubini's theorem, we have
\begin{align*}
\|\mathcal{G}_s\mathcal{I}f\|_{L_T^1(\dot{B}^{s})}\le&\int_0^T\int_0^t\|\mathcal{G}_se^{(t-\tau)\mathcal{G}_s}f(\tau)\|_{\dot{B}^{s}}\,d\tau\,dt\\
=&\int_0^T\,d\tau\int_{\tau}^T\|\mathcal{G}_se^{(t-\tau)\mathcal{G}_s}f(\tau)\|_{\dot{B}^{s}}\,dt\lesssim\|f\|_{L_T^1(\dot{B}^{s})}.
\end{align*}
So by Lemma \ref{mild to strong}, $u=e^{t\mathcal{G}_s}x+\mathcal{I}f(t)$ is a strong solution to \eqref{ACP positive s}. The estimate for $u'$ follows directly by the previous estimates and the equation \eqref{ACP positive s}. So the proof is completed.
\end{proof}

Now we consider the abstract Cauchy problem \eqref{ACP associated with composite operator} associated with a composite operator of the form
$\mathcal{S}=\mathcal{B}\mathcal{A}$, where $\mathcal{B}$ is a bounded invertible operator and $\mathcal{A}$ is an unbounded operator. Our analysis relies on an intriguing equivalent characterization of norms (see {\cite[Theorem~3.9]{huan 2021}}). Such a result will be very useful for studying density-dependent viscous fluids.

We start with some assumptions.

\begin{assumption}\label{assume A generates a bounded analytic semigroup}
The linear operator $\mathcal{A}:D(\mathcal{A})\subset X\rightarrow X$ generates a bounded analytic semigroup $e^{t\mathcal{A}}$ satisfying $\lim_{t\rightarrow\infty}\|e^{t\mathcal{A}}x\|=0$ for every $x\in X$.
\end{assumption}

\begin{assumption}\label{assume B is bounded and invertible}
$\mathcal{B}\in\mathscr{L}(X)$ is invertible with an inverse $\mathcal{B}^{-1}\in\mathscr{L}(X)$.
\end{assumption}

\begin{assumption}\label{assume BA generates a bounded analytic semigroup}
$\mathcal{S}=\mathcal{B}\mathcal{A}:D(\mathcal{A})\subset X\rightarrow X$ generates a bounded analytic semigroup $e^{t\mathcal{S}}$ satisfying $\lim_{t\rightarrow\infty}\|e^{t\mathcal{S}}x\|=0$ for every $x\in X$.
\end{assumption}

\begin{lemma}\label{intriguing lemma}
Under Assumptions \ref{assume A generates a bounded analytic semigroup}-\ref{assume BA generates a bounded analytic semigroup}, it holds for any $(s,q)\in(0,1)\times[1,\infty]$ and $x\in X$ that
\begin{align}\label{equivalence of norms with negative regularity}
\left\|t^{s}\|e^{t\mathcal{S}}x\|\right\|_{L^q(\R_+,\frac{dt}{t})}\simeq\left\|t^{s}\|e^{t\mathcal{A}}\mathcal{B}^{-1}x\|\right\|_{L^q(\R_+,\frac{dt}{t})}.
\end{align}
Consequently, we have for any $x\in D(\mathcal{A})$,
\begin{align}\label{equivalence of norms with positive regularity}
\left\|t^{-s}\|t\mathcal{S}e^{t\mathcal{S}}x\|\right\|_{L^q(\R_+,\frac{dt}{t})}\simeq\left\|t^{-s}\|t\mathcal{A}e^{t\mathcal{A}}x\|\right\|_{L^q(\R_+,\frac{dt}{t})}.
\end{align}
\begin{proof}
By Assumption \ref{assume A generates a bounded analytic semigroup}, we have for any $x\in X$ that
\begin{align*}
x=-\lim_{\varepsilon\rightarrow0^+}\int_{\varepsilon}^{1/\varepsilon}\mathcal{A}e^{\tau\mathcal{A}}x\,d\tau,
\end{align*}
where the limit converges in $X$. Replacing $x$ by $\mathcal{B}^{-1}x$ gives
\begin{align*}
\mathcal{B}^{-1}x=-\int_0^\infty\mathcal{A}e^{\tau\mathcal{A}}\mathcal{B}^{-1}x\,d\tau.
\end{align*}
Applying $e^{t\mathcal{S}}\mathcal{B}$ to both sides of the above identity, we obtain
\begin{align*}
e^{t\mathcal{S}}x=-\int_0^\infty e^{t\mathcal{S}}\mathcal{B}\mathcal{A}e^{\tau\mathcal{A}}\mathcal{B}^{-1}x\,d\tau.
\end{align*}
We can bound the integrand in two different ways:
\begin{align*}
\|e^{t\mathcal{S}}\mathcal{B}\mathcal{A}e^{\tau\mathcal{A}}\mathcal{B}^{-1}x\|=\|e^{t\mathcal{S}}\mathcal{S}e^{\tau\mathcal{A}}\mathcal{B}^{-1}x\|\lesssim\frac1t\|e^{\tau\mathcal{A}}\mathcal{B}^{-1}x\|\lesssim\frac1t\|e^{\frac{\tau}{2}\mathcal{A}}\mathcal{B}^{-1}x\|,
\end{align*}
or,
\begin{align*}
\|e^{t\mathcal{S}}\mathcal{B}\mathcal{A}e^{\tau\mathcal{A}}\mathcal{B}^{-1}x\|\lesssim\|\mathcal{A}e^{\tau\mathcal{A}}\mathcal{B}^{-1}x\|\lesssim\frac{1}{\tau}\|e^{\frac{\tau}{2}\mathcal{A}}\mathcal{B}^{-1}x\|.
\end{align*}
So we arrive at
\begin{align*}
\|e^{t\mathcal{S}}x\|\lesssim\int_0^\infty\frac{1}{t\vee\tau}\|e^{\tau\mathcal{A}}\mathcal{B}^{-1}x\|\,d\tau.
\end{align*}
Multiplying both sides by $t^s$, we get
\begin{align*}
t^s\|e^{t\mathcal{S}}x\|\lesssim\int_0^\infty\left(\frac{t}{\tau}\right)^s\left(1\wedge\frac{\tau}{t}\right)\tau^s\|e^{\tau\mathcal{A}}\mathcal{B}^{-1}x\|\,\frac{d\tau}{\tau}.
\end{align*}
Since $s\in(0,1)$, it is easy to verify that
\begin{align*}
\sup_{t>0}\int_0^\infty\left(\frac{t}{\tau}\right)^{s}\left(\frac{\tau}{t}\wedge1\right)\,\frac{d\tau}{\tau}+\sup_{\tau>0}\int_0^\infty\left(\frac{t}{\tau}\right)^{s}\left(\frac{\tau}{t}\wedge1\right)\,\frac{dt}{t}
\le C.
\end{align*}
It then follows from {\cite[Lemma~3.7]{huan 2021}} that
\begin{align*}
\left\|t^{s}\|e^{t\mathcal{S}}x\|\right\|_{L^q(\R_+,\frac{dt}{t})}\lesssim\left\|t^{s}\|e^{t\mathcal{A}}\mathcal{B}^{-1}x\|\right\|_{L^q(\R_+,\frac{dt}{t})}.
\end{align*}

The reverse inequality can be proved in a similar way. By Assumption \ref{assume BA generates a bounded analytic semigroup}, we have for any $x\in X$ that
\begin{align*}
x=-\int_0^\infty\mathcal{B}\mathcal{A}e^{\tau\mathcal{S}}x\,d\tau.
\end{align*}
This time we apply $e^{t\mathcal{A}}\mathcal{B}^{-1}$ to both sides of the above identity to get
\begin{align*}
e^{t\mathcal{A}}\mathcal{B}^{-1}x=-\int_0^\infty e^{t\mathcal{A}}\mathcal{A}e^{\tau\mathcal{S}}x\,d\tau.
\end{align*}
So bounding the integrand in two different ways as before gives rise to
\begin{align*}
\|e^{t\mathcal{A}}\mathcal{B}^{-1}x\|\lesssim\int_0^\infty\frac{1}{t\vee\tau}\|e^{\tau\mathcal{S}}x\|\,d\tau.
\end{align*}
This can further imply that
\begin{align*}
\left\|t^{s}\|e^{t\mathcal{A}}\mathcal{B}^{-1}x\|\right\|_{L^q(\R_+,\frac{dt}{t})}\lesssim\left\|t^{s}\|e^{t\mathcal{S}}x\|\right\|_{L^q(\R_+,\frac{dt}{t})}.
\end{align*}
Thus, we have verified \eqref{equivalence of norms with negative regularity}. 

Finally, \eqref{equivalence of norms with positive regularity} follows by replacing $x$ by $\mathcal{S}x$ in \eqref{equivalence of norms with negative regularity}.
\end{proof}
\end{lemma}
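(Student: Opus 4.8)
The plan is to prove the equivalence \eqref{equivalence of norms with negative regularity} of the ``negative regularity'' quantities first, and then to deduce \eqref{equivalence of norms with positive regularity} from it by substituting $\mathcal{S}x$ for $x$ and relabelling the regularity index. Each of the two estimates implicit in $\simeq$ in \eqref{equivalence of norms with negative regularity} will be produced by the same three moves: first, a reproducing integral identity coming from the decay hypotheses in Assumptions \ref{assume A generates a bounded analytic semigroup} and \ref{assume BA generates a bounded analytic semigroup}; second, a two-sided pointwise bound for the integrand, one side using analyticity of $\mathcal{A}$ and the other analyticity of $\mathcal{S}$, together with boundedness of the semigroups and of $\mathcal{B},\mathcal{B}^{-1}$; third, a Schur-type bound showing that a certain scaling-invariant integral operator is bounded on $L^q(\R_+,\frac{dt}{t})$.

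For the bound $\lesssim$ in \eqref{equivalence of norms with negative regularity}, I would start from $\frac{d}{d\tau}e^{\tau\mathcal{A}}y=\mathcal{A}e^{\tau\mathcal{A}}y$ and $\|e^{\tau\mathcal{A}}y\|\to0$ as $\tau\to\infty$: the identity $\int_\varepsilon^{1/\varepsilon}\mathcal{A}e^{\tau\mathcal{A}}y\,d\tau=e^{\varepsilon\mathcal{A}}y-e^{(1/\varepsilon)\mathcal{A}}y$ gives, in the limit $\varepsilon\to0^+$, the improper Bochner representation $y=-\int_0^\infty\mathcal{A}e^{\tau\mathcal{A}}y\,d\tau$ in $X$. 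Taking $y=\mathcal{B}^{-1}x$, applying the bounded operator $e^{t\mathcal{S}}\mathcal{B}$ (which passes through the limit), and using $\mathcal{B}\mathcal{A}=\mathcal{S}$ on $D(\mathcal{A})=D(\mathcal{S})$, I obtain $e^{t\mathcal{S}}x=-\int_0^\infty e^{t\mathcal{S}}\mathcal{S}e^{\tau\mathcal{A}}\mathcal{B}^{-1}x\,d\tau$. The integrand is bounded in two ways: $\|e^{t\mathcal{S}}\mathcal{S}w\|=\|\mathcal{S}e^{t\mathcal{S}}w\|\lesssim t^{-1}\|w\|$ with $w=e^{\tau\mathcal{A}}\mathcal{B}^{-1}x$ (analyticity of $\mathcal{S}$), and $\|e^{t\mathcal{S}}\mathcal{B}\,\mathcal{A}e^{\tau\mathcal{A}}\mathcal{B}^{-1}x\|\lesssim\|\mathcal{A}e^{\tau\mathcal{A}}\mathcal{B}^{-1}x\|\lesssim\tau^{-1}\|e^{(\tau/2)\mathcal{A}}\mathcal{B}^{-1}x\|$ (analyticity of $\mathcal{A}$); using boundedness of $e^{(\tau/2)\mathcal{A}}$ in both and a change of variables $\tau\mapsto 2\tau$, this gives $\|e^{t\mathcal{S}}x\|\lesssim\int_0^\infty(t\vee\tau)^{-1}\|e^{\tau\mathcal{A}}\mathcal{B}^{-1}x\|\,d\tau$. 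Multiplying by $t^s$ rewrites the right-hand side as $\int_0^\infty k(t,\tau)\,\tau^s\|e^{\tau\mathcal{A}}\mathcal{B}^{-1}x\|\,\frac{d\tau}{\tau}$ with $k(t,\tau)=(t/\tau)^s(1\wedge\tau/t)$, invariant under $(t,\tau)\mapsto(\lambda t,\lambda\tau)$; for $s\in(0,1)$ one gets $\sup_{t>0}\int_0^\infty k(t,\tau)\,\frac{d\tau}{\tau}+\sup_{\tau>0}\int_0^\infty k(t,\tau)\,\frac{dt}{t}<\infty$, so the associated integral operator is bounded on every $L^q(\R_+,\frac{dt}{t})$ (the generalized Young/Schur inequality used in \cite{huan 2021}), which yields the estimate and at the same time guarantees finiteness of the integral for a.e.\ $t$ whenever the right-hand norm is finite.

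The reverse bound is entirely symmetric: Assumption \ref{assume BA generates a bounded analytic semigroup} gives $x=-\int_0^\infty\mathcal{S}e^{\tau\mathcal{S}}x\,d\tau$, and applying $e^{t\mathcal{A}}\mathcal{B}^{-1}$ produces $e^{t\mathcal{A}}\mathcal{B}^{-1}x=-\int_0^\infty e^{t\mathcal{A}}\mathcal{A}e^{\tau\mathcal{S}}x\,d\tau$, which is bounded by $\|\mathcal{A}e^{t\mathcal{A}}(\cdot)\|\lesssim t^{-1}\|\cdot\|$ and by $\|\mathcal{S}e^{\tau\mathcal{S}}x\|\lesssim\tau^{-1}\|e^{(\tau/2)\mathcal{S}}x\|$, after which the same kernel bound closes the argument and \eqref{equivalence of norms with negative regularity} follows. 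Finally, for $x\in D(\mathcal{A})=D(\mathcal{S})$ one applies \eqref{equivalence of norms with negative regularity} to $\mathcal{S}x\in X$ with $s$ replaced by $1-s\in(0,1)$; since $e^{t\mathcal{S}}\mathcal{S}x=\mathcal{S}e^{t\mathcal{S}}x$, $e^{t\mathcal{A}}\mathcal{B}^{-1}\mathcal{S}x=e^{t\mathcal{A}}\mathcal{A}x=\mathcal{A}e^{t\mathcal{A}}x$, and $t^{1-s}\|z\|=t^{-s}\|tz\|$, the resulting equivalence is exactly \eqref{equivalence of norms with positive regularity}.

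I expect the main obstacle to be not any single computation but the legitimization of the integral manipulations: the representation $y=-\int_0^\infty\mathcal{A}e^{\tau\mathcal{A}}y\,d\tau$ converges only as an improper, not absolutely convergent, Bochner integral near $\tau=0$, and the decay hypotheses $\|e^{t\mathcal{A}}x\|\to0$ and $\|e^{t\mathcal{S}}x\|\to0$ (rather than mere boundedness) are exactly what is needed to get convergence at $\tau=\infty$ and to justify pulling the bounded operators $e^{t\mathcal{S}}\mathcal{B}$ and $e^{t\mathcal{A}}\mathcal{B}^{-1}$ inside. The second delicate point is the $L^q(\R_+,\frac{dt}{t})$-boundedness of the scaling-invariant kernel $k(t,\tau)$, uniformly in $q\in[1,\infty]$; once these two are settled, the remainder is routine analytic-semigroup bookkeeping.
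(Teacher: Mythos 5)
Your proposal reproduces the paper's argument essentially step for step: the same reproducing identity $y=-\int_0^\infty\mathcal{A}e^{\tau\mathcal{A}}y\,d\tau$, the same two-sided bound of the integrand by analyticity of $\mathcal{S}$ and of $\mathcal{A}$ giving $\|e^{t\mathcal{S}}x\|\lesssim\int_0^\infty(t\vee\tau)^{-1}\|e^{\tau\mathcal{A}}\mathcal{B}^{-1}x\|\,d\tau$, the same Schur-type kernel bound, the same symmetric reverse argument, and the same substitution $x\mapsto\mathcal{S}x$. Your explicit note that this substitution also relabels $s\mapsto 1-s$ is a correct and slightly more careful reading of the paper's terse final sentence, but the approach is otherwise identical.
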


We assume additionally that
\begin{assumption}\label{assume A is injective}
$\mathcal{A}:D(\mathcal{A})\subset X\rightarrow X$ is one-to-one.
\end{assumption}
So $\mathcal{S}$ satisfies Assumption \ref{assume S is injective and generates bounded analytic semigroup}.  Then the equivalence of norms implies the equivalence of spaces. More precisely, we get immediately from Lemma \ref{intriguing lemma} that

\begin{corollary}\label{corollary of spaces identification}
Let $s\in(0,2)$. Under Assumptions \ref{assume A generates a bounded analytic semigroup}-\ref{assume A is injective}, we have

{\rm (\romannumeral1)} $\dot{B}_{X,1}^{s,\mathcal{S}}=\dot{B}_{X,1}^{s,\mathcal{A}}$ with equivalent norms,

{\rm (\romannumeral2)} $\dot{B}_{X,1}^{-s,\mathcal{S}}$ coincides with the completion of $R(\mathcal{S})$ with respect to the norm $\|\mathcal{B}^{-1}\cdot\|_{\dot{B}_{X,1}^{-s,\mathcal{A}}}$,

\noindent where the spaces and norms associated with $\mathcal{A}$ are defined in an obvious way.
\end{corollary}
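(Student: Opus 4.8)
The plan is to read off both identifications from Lemma~\ref{intriguing lemma} specialized to $q=1$, together with the elementary principle that two equivalent norms on a common vector space have the same completion (the identity on that space extends uniquely to a topological isomorphism of the two completions). First I would record the structural consequences of Assumptions~\ref{assume B is bounded and invertible}--\ref{assume A is injective}: since $\mathcal{B}\in\mathscr{L}(X)$ is boundedly invertible, $D(\mathcal{S})=D(\mathcal{B}\mathcal{A})=D(\mathcal{A})$ and $R(\mathcal{S})=R(\mathcal{B}\mathcal{A})=\mathcal{B}\big(R(\mathcal{A})\big)$, so $\mathcal{B}^{-1}$ restricts to a linear bijection of $R(\mathcal{S})$ onto $R(\mathcal{A})$; moreover $\mathcal{S}=\mathcal{B}\mathcal{A}$ is injective, hence Assumption~\ref{assume S is injective and generates bounded analytic semigroup} holds and the quantities $\|\cdot\|_{\dot{B}_{X,1}^{s,\mathcal{S}}}$, $\|\cdot\|_{\dot{B}_{X,1}^{-s,\mathcal{S}}}$ (and their $\mathcal{A}$-analogues) are genuine norms on $D(\mathcal{S})$ and $R(\mathcal{S})$ (resp.\ $D(\mathcal{A})$, $R(\mathcal{A})$), making the completions well defined.

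For part~(\romannumeral1): apply \eqref{equivalence of norms with positive regularity} of Lemma~\ref{intriguing lemma} with $q=1$ and with the parameter there taken to be $s/2\in(0,1)$, which is legitimate since $s\in(0,2)$. For every $x\in D(\mathcal{A})=D(\mathcal{S})$ this says precisely $\|x\|_{\dot{B}_{X,1}^{s,\mathcal{S}}}\simeq\|x\|_{\dot{B}_{X,1}^{s,\mathcal{A}}}$. Since both spaces are, by definition, completions of $D(\mathcal{A})$ for these respective norms, the equivalence forces the completions to coincide with equivalent norms, giving (\romannumeral1).

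For part~(\romannumeral2): apply \eqref{equivalence of norms with negative regularity} with $q=1$ and parameter $s/2\in(0,1)$. For $x\in X$ it reads $\big\|t^{s/2}\|e^{t\mathcal{S}}x\|\big\|_{L^1(\R_+,\frac{dt}{t})}\simeq\big\|t^{s/2}\|e^{t\mathcal{A}}\mathcal{B}^{-1}x\|\big\|_{L^1(\R_+,\frac{dt}{t})}$. Restricting to $x\in R(\mathcal{S})$, we have $\mathcal{B}^{-1}x\in R(\mathcal{A})$, so the left side is $\|x\|_{\dot{B}_{X,1}^{-s,\mathcal{S}}}$ while the right side is $\|\mathcal{B}^{-1}x\|_{\dot{B}_{X,1}^{-s,\mathcal{A}}}$. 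Hence on $R(\mathcal{S})$ the norm $\|\cdot\|_{\dot{B}_{X,1}^{-s,\mathcal{S}}}$ is equivalent to $x\mapsto\|\mathcal{B}^{-1}x\|_{\dot{B}_{X,1}^{-s,\mathcal{A}}}$; the two completions of $R(\mathcal{S})$ therefore agree, which is exactly (\romannumeral2).

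The only points needing any attention — and they are minor — are matching the parameter ranges of Lemma~\ref{intriguing lemma} (namely $s/2\in(0,1)$ and $q=1$) with those appearing in the definitions of $\dot{B}_{X,1}^{\pm s,\mathcal{S}}$, and invoking the density/extension argument that identifies completions under equivalent norms. I do not expect a genuine obstacle here; the corollary is essentially a direct translation of the norm equivalences in Lemma~\ref{intriguing lemma} into statements about spaces.
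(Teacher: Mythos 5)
Your proposal is correct and follows essentially the same route as the paper: the paper derives the corollary directly from Lemma \ref{intriguing lemma} with $q=1$ and parameter $s/2\in(0,1)$, using that $D(\mathcal{S})=D(\mathcal{A})$ and that $\mathcal{B}^{-1}$ maps $R(\mathcal{S})$ onto $R(\mathcal{A})$, so the norm equivalences on these common underlying sets yield identical completions. Your explicit bookkeeping (injectivity of $\mathcal{S}$ via Assumptions \ref{assume B is bounded and invertible} and \ref{assume A is injective}, and the parameter translation $s\mapsto s/2$) is exactly the intended argument.
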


It turns out that the operator $\mathcal{B}$ acting on $\dot{B}_{X,1}^{-s,\mathcal{A}}$ is meaningful. Indeed, \eqref{equivalence of norms with negative regularity} implies that $\mathcal{B}|_{R(\mathcal{A})}$ extends to a continuous operator, denoted by $\overline{\mathcal{B}}$, from $\dot{B}_{X,1}^{-s,\mathcal{A}}$ to $\dot{B}_{X,1}^{-s,\mathcal{S}}$; and that $\mathcal{B}^{-1}|_{R(\mathcal{S})}$ extends to a continuous operator, denoted by $\overline{\mathcal{B}^{-1}}$, from $\dot{B}_{X,1}^{-s,\mathcal{S}}$ to $\dot{B}_{X,1}^{-s,\mathcal{A}}$. Obviously, $\overline{\mathcal{B}}$ is invertible and $\overline{\mathcal{B}}^{-1}=\overline{\mathcal{B}^{-1}}$.
These facts can help us identify  $\mathcal{G}_{-s}$ in the following

\begin{lemma}\label{identify G negative s}
Assuming Assumptions \ref{assume A generates a bounded analytic semigroup}-\ref{assume A is injective}, then the operator
\begin{align*}
\mathcal{A}:D(\mathcal{A})\cap R(\mathcal{S})\subset\dot{B}_{X,1}^{-s,\mathcal{S}}\rightarrow\dot{B}_{X,1}^{-s,\mathcal{A}}
\end{align*}
is closable. Moreover, we have $\mathcal{G}_{-s}=\overline{\mathcal{B}}\,\overline{\mathcal{A}}$, where $\overline{\mathcal{A}}$ is the closure of the above $\mathcal{A}$.
\end{lemma}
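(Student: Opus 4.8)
The plan is to reduce the statement to an elementary fact of operator theory: the composition $LT$ of a closable operator $T$ with a bounded, everywhere-defined, invertible operator $L$ is again closable, and $\overline{LT}=L\,\overline{T}$ with the same domain as $\overline{T}$. The factorization that makes this applicable is the one prepared in the paragraph preceding the lemma: $\overline{\mathcal B}\colon\dot B_{X,1}^{-s,\mathcal A}\to\dot B_{X,1}^{-s,\mathcal S}$ is bounded and invertible with $\overline{\mathcal B}^{-1}=\overline{\mathcal B^{-1}}$, the latter being the continuous extension of $\mathcal B^{-1}|_{R(\mathcal S)}$. Combined with Lemma~\ref{G is a closure of S}(ii), which identifies $\mathcal G_{-s}$ with the closure of $\mathcal S=\mathcal B\mathcal A\colon D(\mathcal A)\cap R(\mathcal S)\subset\dot B_{X,1}^{-s,\mathcal S}\to\dot B_{X,1}^{-s,\mathcal S}$, this should yield closability of $\mathcal A$ and the formula $\mathcal G_{-s}=\overline{\mathcal B}\,\overline{\mathcal A}$ at one stroke.

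First I would record the pointwise identity $\mathcal A x=\overline{\mathcal B^{-1}}\,\mathcal S x$ for $x\in D(\mathcal A)\cap R(\mathcal S)$. This is legitimate because for such $x$ one has $\mathcal S x=\mathcal B\mathcal A x\in R(\mathcal S)$, on which $\overline{\mathcal B^{-1}}$ acts simply as $\mathcal B^{-1}$, returning $\mathcal B^{-1}\mathcal B\mathcal A x=\mathcal A x$; moreover $\mathcal A x\in R(\mathcal A)\subset\dot B_{X,1}^{-s,\mathcal A}$, as required of the target space. Thus the operator $\mathcal A\colon D(\mathcal A)\cap R(\mathcal S)\subset\dot B_{X,1}^{-s,\mathcal S}\to\dot B_{X,1}^{-s,\mathcal A}$ is precisely $\overline{\mathcal B^{-1}}$ precomposed with $\mathcal S|_{D(\mathcal A)\cap R(\mathcal S)}$.

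Next I would establish the general fact in two lines. If $y_n\in D(T)$, $y_n\to0$ and $LTy_n\to w$, then $Ty_n=L^{-1}(LTy_n)\to L^{-1}w$, so $L^{-1}w=0$ by closability of $T$, i.e. $w=0$; hence $LT$ is closable. For the closure, the homeomorphism $\mathrm{id}\times L^{-1}$ of $Y\times W$ onto $Y\times Z$ carries $\overline{\operatorname{graph}(LT)}$ onto $\overline{\operatorname{graph}(T)}=\operatorname{graph}(\overline{T})$, whence $\overline{LT}=L\,\overline{T}$ with $D(\overline{LT})=D(\overline{T})$. Applying this with $Y=Z=\dot B_{X,1}^{-s,\mathcal S}$, $W=\dot B_{X,1}^{-s,\mathcal A}$, $L=\overline{\mathcal B^{-1}}$ and $T=\mathcal S|_{D(\mathcal A)\cap R(\mathcal S)}$, whose closure is $\mathcal G_{-s}$ by Lemma~\ref{G is a closure of S}(ii), I conclude that $\mathcal A$ is closable with $\overline{\mathcal A}=\overline{\mathcal B^{-1}}\,\mathcal G_{-s}$ on $D(\mathcal G_{-s})$. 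Composing on the left with $\overline{\mathcal B}$ and using $\overline{\mathcal B}\,\overline{\mathcal B^{-1}}=\mathrm{id}$ on $\dot B_{X,1}^{-s,\mathcal S}$ gives $\overline{\mathcal B}\,\overline{\mathcal A}=\mathcal G_{-s}$, which is the assertion.

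I do not anticipate a genuine obstacle here; the content is essentially bookkeeping. The only point requiring a little care — and the only place where the earlier preparation is used in an essential way — is that all the elements in play live a priori in the ambient space $X$ but must be read inside the two distinct abstract completions $\dot B_{X,1}^{-s,\mathcal S}$ and $\dot B_{X,1}^{-s,\mathcal A}$; once one uses Corollary~\ref{corollary of spaces identification}(ii) and the paragraph following it to know that $\overline{\mathcal B}$ and $\overline{\mathcal B^{-1}}$ are mutually inverse bounded maps between these completions which restrict to $\mathcal B^{\pm1}$ on the dense subspaces $R(\mathcal A)$ and $R(\mathcal S)$, the argument above goes through verbatim.
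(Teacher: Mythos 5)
Your argument is correct and follows essentially the same route as the paper: the paper also invokes Lemma~\ref{G is a closure of S}~(\romannumeral2) to identify $\mathcal{G}_{-s}$ as the closure of $\mathcal{B}\mathcal{A}$ on $D(\mathcal{A})\cap R(\mathcal{S})$ and then simply asserts that $\overline{\mathcal{A}}\vcentcolon=\overline{\mathcal{B}}^{-1}\mathcal{G}_{-s}$ is the closure of $\mathcal{A}$, i.e.\ it uses the same composition-with-$\overline{\mathcal{B}^{-1}}$ step you make explicit. Your contribution is merely to spell out the elementary graph argument (closability and $\overline{LT}=L\overline{T}$ for $L$ bounded invertible) that the paper leaves implicit, which is fine.
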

\begin{proof}
We see from Lemma \ref{G is a closure of S} {\rm (\romannumeral2)} that $\mathcal{G}_{-s}$ is the closure of
\begin{align*}
\mathcal{B}\mathcal{A}:D(\mathcal{A})\cap R(\mathcal{S})\subset\dot{B}_{X,1}^{-s,\mathcal{S}}\rightarrow\dot{B}_{X,1}^{-s,\mathcal{S}}.
\end{align*}
It follows that $\overline{\mathcal{A}}\vcentcolon=\overline{\mathcal{B}}^{-1}\mathcal{G}_{-s}$ is the closure of
\begin{align*}
\mathcal{A}:D(\mathcal{A})\cap R(\mathcal{S})\subset\dot{B}_{X,1}^{-s,\mathcal{S}}\rightarrow\dot{B}_{X,1}^{-s,\mathcal{A}}.
\end{align*}
This completes the proof.
\end{proof}

We conclude this section with the maximal $L^1$ regularity for the Cauchy problem
\begin{align}\label{ACP associated with closure of A}
\overline{\mathcal{B}}^{-1}u'(t)-\overline{\mathcal{A}}u(t)=f(t),\ \ u(0)=x.
\end{align}

\begin{theorem}\label{Maximal regularity theorem associated with closure of A}
Let $s\in(0,2)$ and $T\in(0,\infty]$. Assuming Assumptions \ref{assume A generates a bounded analytic semigroup}-\ref{assume A is injective}, if $x\in\dot{B}_{X,1}^{-s,\mathcal{S}}$ and $f\in L^1((0,T);\dot{B}_{X,1}^{-s,\mathcal{A}})$, then \eqref{ACP associated with closure of A} has a unique strong solution $u$ in the class
\begin{align*}
u\in C([0,T);\dot{B}_{X,1}^{-s,\mathcal{S}}),\ u'\in L^1((0,T);\dot{B}_{X,1}^{-s,\mathcal{S}}),\ \overline{\mathcal{A}}u\in L^1((0,T);\dot{B}_{X,1}^{-s,\mathcal{A}}).
\end{align*}
Moreover, it holds that
\begin{align*}
\|\overline{\mathcal{B}}^{-1}u\|_{L_T^\infty(\dot{B}_{X,1}^{-s,\mathcal{A}})}+\|\overline{\mathcal{B}}^{-1}u',\overline{\mathcal{A}}u\|_{L_T^1(\dot{B}_{X,1}^{-s,\mathcal{A}})}\le C\|\overline{\mathcal{B}}^{-1}x\|_{\dot{B}_{X,1}^{-s,\mathcal{A}}}+C\|f\|_{L_T^1(\dot{B}_{X,1}^{-s,\mathcal{A}})},
\end{align*}
where $C$ depends on $s$, $\|\mathcal{B}\|_{\mathscr{L}(X)}$ and $\|\mathcal{B}^{-1}\|_{\mathscr{L}(X)}$.
\end{theorem}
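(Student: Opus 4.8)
The plan is to transfer the abstract maximal $L^1$ regularity result of Theorem \ref{Maximal regularity theorem associated with Gs}(ii) from the equation \eqref{ACP negative s} governed by $\mathcal{G}_{-s}$ to the equation \eqref{ACP associated with closure of A} via the identification $\mathcal{G}_{-s}=\overline{\mathcal{B}}\,\overline{\mathcal{A}}$ furnished by Lemma \ref{identify G negative s}. Concretely, I would begin by observing that, since $\overline{\mathcal{B}}:\dot{B}_{X,1}^{-s,\mathcal{A}}\to\dot{B}_{X,1}^{-s,\mathcal{S}}$ is an isomorphism with inverse $\overline{\mathcal{B}}^{-1}=\overline{\mathcal{B}^{-1}}$, the Cauchy problem \eqref{ACP associated with closure of A} is \emph{equivalent} to \eqref{ACP negative s}: applying $\overline{\mathcal{B}}$ to \eqref{ACP associated with closure of A} turns it into $u'(t)-\overline{\mathcal{B}}\,\overline{\mathcal{A}}u(t)=\overline{\mathcal{B}}f(t)$, i.e. $u'(t)-\mathcal{G}_{-s}u(t)=\overline{\mathcal{B}}f(t)$. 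The datum $\overline{\mathcal{B}}f$ lies in $L^1((0,T);\dot{B}_{X,1}^{-s,\mathcal{S}})$ because $\overline{\mathcal{B}}\in\mathscr{L}(\dot{B}_{X,1}^{-s,\mathcal{A}},\dot{B}_{X,1}^{-s,\mathcal{S}})$ with operator norm controlled by $\|\mathcal{B}\|_{\mathscr{L}(X)}$ (this is exactly the constant from \eqref{equivalence of norms with negative regularity}), and the initial datum $x$ is already in $\dot{B}_{X,1}^{-s,\mathcal{S}}$. So Theorem \ref{Maximal regularity theorem associated with Gs}(ii) applies and yields a unique strong solution $u\in C([0,T);\dot{B}_{X,1}^{-s,\mathcal{S}})$ with $u',\mathcal{G}_{-s}u\in L^1((0,T);\dot{B}_{X,1}^{-s,\mathcal{S}})$ and
\begin{align*}
\|u\|_{L_T^\infty(\dot{B}_{X,1}^{-s,\mathcal{S}})}+\|u',\mathcal{G}_{-s}u\|_{L_T^1(\dot{B}_{X,1}^{-s,\mathcal{S}})}\le C\|x\|_{\dot{B}_{X,1}^{-s,\mathcal{S}}}+C\|\overline{\mathcal{B}}f\|_{L_T^1(\dot{B}_{X,1}^{-s,\mathcal{S}})}.
\end{align*}

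Next I would translate each term back to the $\mathcal{A}$-scale. From $\mathcal{G}_{-s}u=\overline{\mathcal{B}}\,\overline{\mathcal{A}}u$ we get $\overline{\mathcal{A}}u=\overline{\mathcal{B}}^{-1}\mathcal{G}_{-s}u\in L^1((0,T);\dot{B}_{X,1}^{-s,\mathcal{A}})$; and applying the bounded operator $\overline{\mathcal{B}}^{-1}$ to $u$ and $u'$ (note $\overline{\mathcal{B}}^{-1}$ is time-independent, so it commutes with $\partial_t$) gives $\overline{\mathcal{B}}^{-1}u\in C([0,T);\dot{B}_{X,1}^{-s,\mathcal{A}})$ and $\overline{\mathcal{B}}^{-1}u'\in L^1((0,T);\dot{B}_{X,1}^{-s,\mathcal{A}})$. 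Using the two-sided bound $\|z\|_{\dot{B}_{X,1}^{-s,\mathcal{S}}}\simeq\|\overline{\mathcal{B}}^{-1}z\|_{\dot{B}_{X,1}^{-s,\mathcal{A}}}$ (Corollary \ref{corollary of spaces identification}(ii), with constants depending only on $\|\mathcal{B}\|_{\mathscr{L}(X)}$, $\|\mathcal{B}^{-1}\|_{\mathscr{L}(X)}$), and the bound $\|\overline{\mathcal{B}}f\|_{\dot{B}_{X,1}^{-s,\mathcal{S}}}\lesssim\|f\|_{\dot{B}_{X,1}^{-s,\mathcal{A}}}$, I convert the displayed estimate above into exactly the asserted inequality with the claimed dependence of $C$. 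Uniqueness in the stated class follows from uniqueness in Theorem \ref{Maximal regularity theorem associated with Gs}(ii) after noting that the class in the present statement maps bijectively, under $u\mapsto u$, onto the solution class there (the regularity requirements are equivalent under the isomorphism $\overline{\mathcal{B}}$).

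The one point that requires a little care — and which I expect to be the main (though modest) obstacle — is making the notion of strong solution to \eqref{ACP associated with closure of A} precise and checking that it is genuinely preserved under application of $\overline{\mathcal{B}}$, since $\overline{\mathcal{A}}$ is an operator \emph{between two different spaces} ($\dot{B}_{X,1}^{-s,\mathcal{S}}\to\dot{B}_{X,1}^{-s,\mathcal{A}}$) rather than on a single space. The clean way to handle this is to declare that ``$u$ is a strong solution of \eqref{ACP associated with closure of A}'' means precisely that $u$ belongs to the stated class, $u(0)=x$, and the identity $\overline{\mathcal{B}}^{-1}u'(t)-\overline{\mathcal{A}}u(t)=f(t)$ holds for a.e.\ $t$ in $\dot{B}_{X,1}^{-s,\mathcal{A}}$; equivalently, after applying the isomorphism $\overline{\mathcal{B}}$, that $v:=u$ is a strong solution of \eqref{ACP negative s} with datum $\overline{\mathcal{B}}f$ in the sense already defined for $\mathcal{G}_{-s}$ (using Lemma \ref{mild to strong} and that $D(\overline{\mathcal{A}})=D(\mathcal{G}_{-s})$ as a consequence of $\overline{\mathcal{A}}=\overline{\mathcal{B}}^{-1}\mathcal{G}_{-s}$). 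With this bookkeeping in place, the proof reduces, as described, to a one-line invocation of Theorem \ref{Maximal regularity theorem associated with Gs}(ii) combined with the norm equivalences, and no further estimates are needed.
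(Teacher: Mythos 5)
Your proposal is correct and is essentially identical to the paper's own proof: the paper likewise applies $\overline{\mathcal{B}}$ to \eqref{ACP associated with closure of A}, invokes Lemma \ref{identify G negative s} to recognize the resulting problem as $u'-\mathcal{G}_{-s}u=\overline{\mathcal{B}}f$, applies Theorem \ref{Maximal regularity theorem associated with Gs} (\romannumeral2), and translates back via the continuity of $\overline{\mathcal{B}}$ and $\overline{\mathcal{B}}^{-1}$. Your extra bookkeeping about the meaning of a strong solution across the two spaces is sound and simply makes explicit what the paper leaves implicit.
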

\begin{proof}
Note that $\overline{\mathcal{B}}f\in L^1((0,T);\dot{B}_{X,1}^{-s,\mathcal{S}})$. Thanks to the continuity of $\overline{\mathcal{B}}$ and $\overline{\mathcal{B}}^{-1}$, and Lemma \ref{identify G negative s}, then Theorem \ref{Maximal regularity theorem associated with closure of A} follows by applying Theorem \ref{Maximal regularity theorem associated with Gs} {\rm (\romannumeral2)} to the Cauchy problem
\begin{align*}
u'(t)-\overline{\mathcal{B}}\,\overline{\mathcal{A}}u(t)=\overline{\mathcal{B}}f(t),\ \ u(0)=x.
\end{align*}
\end{proof}

\bigskip

\section{Concrete examples}\label{concrete parabolic system section}

\bigskip

In this section, we apply the abstract theory to two concrete examples. The linear system to be considered reads
\begin{eqnarray}\label{concrete Cauchy problem}
\left\{\begin{aligned}
&\rho\partial_t u-\mathcal{A}u=f,\ \ &\mathrm{in}\ (0,\infty)\times\R^n,\\
&u(0)=u_0,\ \ &\mathrm{on}\ \R^n,
\end{aligned}\right.
\end{eqnarray}
where the coefficient $\rho$ is a time-independent function satisfying \eqref{density bounds for Lame system}, and $\mathcal{A}$ is either the Laplacian $\Delta$ or the Lam\'{e} operator $\mathcal{L}$ defined by \eqref{Lame operator}. We denote $b=\rho^{-1}$. From now on, we always assume
\begin{assumption}
$n\ge2$ if $\mathcal{A}=\Delta$, or $n\in\{2,3\}$ if $\mathcal{A}=\mathcal{L}$.
\end{assumption}

We choose $X=L^p=L^p(\R^n;\R^n)$ ($1<p<\infty$), $D(\mathcal{A})=W^{2,p}=W^{2,p}(\R^n;\R^n)$, and $\mathcal{S}=b\mathcal{A}$. Obviously, Assumptions \ref{assume B is bounded and invertible} and \ref{assume A is injective} are satisfied. That $\mathcal{A}$ satisfies Assumption \ref{assume A generates a bounded analytic semigroup} is a classical result (see, e.g., {\cite[Example~3.7.6]{arendt book 2011}}). That $b\Delta:W^{2,p}\subset L^p\rightarrow L^p$ satisfies Assumption \ref{assume BA generates a bounded analytic semigroup} was essentially proved in \cite{mcintosh 2000,duong DIE 1999}. Analogously, we can use Lemma \ref{bL generates bounded analytic semigroup}, Lemma \ref{first step Gaussian upper bound} and Remark \ref{remark on analyticity on Lp} to show that $b\mathcal{L}$ satisfies Assumption \ref{assume BA generates a bounded analytic semigroup} as well.

Let us identify the spaces $\Dot{B}_{X,1}^{\pm s,\mathcal{A}}$. Let $s\in(0,2)$. We know from Lemmas \ref{characterization via classic heat kernel} and \ref{characterization via heat kernel of Lame operator} that the $\Dot{B}_{X,1}^{-s,\mathcal{A}}$-norm is equivalent to the Besov $\Dot{B}_{p,1}^{-s}$-norm. One can see from \eqref{from Laplacian to Lame} and \eqref{from Lame to Laplacian} that $R(\Delta)=R(\mathcal{L})$. It is however easy to see that $R(\Delta)$ is dense in $\Dot{B}_{p,1}^{-s}$. So $\Dot{B}_{X,1}^{-s,\mathcal{A}}$ is identified as $\Dot{B}_{p,1}^{-s}$ for every $s\in(0,2)$. To identify $\Dot{B}_{X,1}^{s,\mathcal{A}}$, we assume additionally $s\le\frac{n}{p}$ so that $\Dot{B}_{p,1}^{s}$ is complete. Then applying Corollary \ref{corollary of spaces identification} {\rm (\romannumeral1)}, Lemmas \ref{characterization via classic heat kernel} and \ref{characterization via heat kernel of Lame operator}, and the obvious fact that $D(\mathcal{A})=W^{2,p}$ is dense in $\dot{B}_{p,1}^{s}$, we get $\dot{B}_{X,1}^{s,\mathcal{S}}=\dot{B}_{X,1}^{s,\mathcal{A}}=\dot{B}_{p,1}^{s}$.

We now turn to the central problem of this section, that is, the maximal $L^1$ regularity for \eqref{concrete Cauchy problem}. In view of Theorem \ref{Maximal regularity theorem associated with Gs} {\rm (\romannumeral1)} and Lemma \ref{G is a closure of S} {\rm (\romannumeral1)}, the smooth solutions to \eqref{concrete Cauchy problem} should satisfy the {\it a priori} estimate
\begin{align*}
\|u\|_{L_T^\infty(\dot{B}_{p,1}^{s})}+\|u',b\mathcal{A} u\|_{L_T^1(\dot{B}_{p,1}^{s})}\lesssim\|u_0\|_{\dot{B}_{p,1}^{s}}+\|bf\|_{L_T^1(\dot{B}_{p,1}^{s})}.
\end{align*}
But if $\rho$ merely satisfies \eqref{density bounds for Lame system}, we can not handle the inhomogeneous term, nor can we obtain the estimate for $\|\mathcal{A}u\|_{L_T^1(\dot{B}_{p,1}^{s})}$. Solving \eqref{concrete Cauchy problem} in Besov spaces with negative regularity seems to be a more promising way to lower the regularity of the density. In fact, from Theorem \ref{Maximal regularity theorem associated with closure of A}, the {\it a priori} estimate for smooth solutions becomes
\begin{align*}
\|\rho u\|_{L_T^\infty(\dot{B}_{p,1}^{-s})}+\|\rho u',\mathcal{A}u\|_{L_T^1(\dot{B}_{p,1}^{-s})}\lesssim\|\rho u_0\|_{\dot{B}_{p,1}^{-s}}+\|f\|_{L_T^1(\dot{B}_{p,1}^{-s})}.
\end{align*}
Unfortunately, the above is not quite true if $u$ is only a strong solution.

By Corollary \ref{corollary of spaces identification} {\rm (\romannumeral2)}, the space $\Dot{B}_{X,1}^{-s,\mathcal{S}}$ agrees with the completion of $(b\mathcal{A}(W^{2,p}),\|\rho\cdot\|_{\dot{B}_{p,1}^{-s}})$, where $b\mathcal{A}(W^{2,p})$ is defined as $\{u=b\mathcal{A}v|v\in W^{2,p}\}$. Then the multiplication by $\rho$ extends to a bounded operator from $\Dot{B}_{X,1}^{-s,\mathcal{S}}$ to $\dot{B}_{p,1}^{-s}$ with a bounded inverse that coincides with the extension of the multiplication by $b$. By Lemma \ref{identify G negative s}, the operator
\begin{align*}
\mathcal{A}: W^{2,p}\cap b\mathcal{A}(W^{2,p})\subset \dot{B}_{X,1}^{-s,\mathcal{S}}\rightarrow \dot{B}_{p,1}^{-s} 
\end{align*}
is closable, and we denote its closure by $\overline{\mathcal{A}}$. Then One can directly interpret Theorem \ref{Maximal regularity theorem associated with closure of A} as follows:
\begin{corollary}\label{corollary of maximal regularity between concrete and abstract}
Let $s\in(0,2)$ and $T\in(0,\infty]$. If $u_0\in \dot{B}_{X,1}^{-s,\mathcal{S}}$ and $f\in L^1((0,T);\dot{B}_{p,1}^{-s})$, then \eqref{concrete Cauchy problem} has a unique strong solution $u$ in the class
\begin{align*}
u\in C([0,T);\dot{B}_{X,1}^{-s,\mathcal{S}}),\  \partial_tu\in L^1((0,T);\dot{B}_{X,1}^{-s,\mathcal{S}}),\  \overline{\mathcal{A}}u\in L^1((0,T);\dot{B}_{p,1}^{-s}).
\end{align*}
Moreover, there exists some constant $C=C(s,m,\mu,\nu)$ such that
\begin{align}\label{advantage of maximal regularity negative s}
\|\rho u\|_{L_T^\infty(\dot{B}_{p,1}^{-s})}+\|\rho u',\overline{\mathcal{A}}u\|_{L_T^1(\dot{B}_{p,1}^{-s})}\le C\|\rho u_0\|_{\dot{B}_{p,1}^{-s}}+C\|f\|_{L_T^1(\dot{B}_{p,1}^{-s})}.
\end{align}
\end{corollary}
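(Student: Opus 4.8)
The plan is to deduce Corollary~\ref{corollary of maximal regularity between concrete and abstract} from the abstract result Theorem~\ref{Maximal regularity theorem associated with closure of A} by verifying that the concrete data fit into the abstract framework, which was already set up in the paragraphs immediately preceding the statement. First I would record the structural identifications that have been established: $X=L^p(\R^n;\R^n)$ with $D(\mathcal{A})=W^{2,p}$, $\mathcal{S}=b\mathcal{A}$, and Assumptions~\ref{assume A generates a bounded analytic semigroup}--\ref{assume A is injective} all hold (the analyticity of $e^{t\mathcal{A}}$ is classical, $e^{t\mathcal{S}}$ is a bounded analytic semigroup by McIntosh--Nahmod/Duong for $\mathcal{A}=\Delta$ and by Lemma~\ref{bL generates bounded analytic semigroup}, Lemma~\ref{first step Gaussian upper bound} together with Remark~\ref{remark on analyticity on Lp} for $\mathcal{A}=\mathcal{L}$, while injectivity of $\mathcal{A}$ and invertibility of $\mathcal{B}$ = multiplication by $b$ are immediate). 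The decay condition $\lim_{t\to\infty}\|e^{t\mathcal{A}}x\|=0$ for the bounded analytic semigroup also needs to be noted; it follows since $\mathcal{A}$ has dense range and the semigroup is uniformly bounded and analytic. Then I would invoke Corollary~\ref{corollary of spaces identification}~(ii) to identify $\dot{B}_{X,1}^{-s,\mathcal{S}}$ as the completion of $R(\mathcal{S})=b\mathcal{A}(W^{2,p})$ under $\|\rho\,\cdot\|_{\dot{B}_{p,1}^{-s}}$, together with Lemmas~\ref{characterization via classic heat kernel} and~\ref{characterization via heat kernel of Lame operator} to identify $\dot{B}_{X,1}^{-s,\mathcal{A}}$ with $\dot{B}_{p,1}^{-s}$ for every $s\in(0,2)$ (using that $R(\Delta)=R(\mathcal{L})$ is dense in $\dot{B}_{p,1}^{-s}$).

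With these identifications in hand, the operator $\overline{\mathcal{B}}$ of the abstract theory becomes precisely the extension of multiplication by $\rho$ from $\dot{B}_{X,1}^{-s,\mathcal{S}}$ onto $\dot{B}_{p,1}^{-s}$, and $\overline{\mathcal{B}}^{-1}$ is the extension of multiplication by $b$; this is exactly the content of equation~\eqref{equivalence of norms with negative regularity} applied with $\mathcal{B}$ = multiplication by $b$. Likewise $\overline{\mathcal{A}}$ is, by Lemma~\ref{identify G negative s}, the closure of $\mathcal{A}\colon W^{2,p}\cap b\mathcal{A}(W^{2,p})\subset\dot{B}_{X,1}^{-s,\mathcal{S}}\to\dot{B}_{p,1}^{-s}$, which is the $\overline{\mathcal{A}}$ appearing in the statement. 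Next I would translate the abstract Cauchy problem~\eqref{ACP associated with closure of A}, namely $\overline{\mathcal{B}}^{-1}u'-\overline{\mathcal{A}}u=f$, into the concrete one~\eqref{concrete Cauchy problem}: since $\overline{\mathcal{B}}^{-1}$ is multiplication by $b$, the equation reads $b\,\partial_tu-\overline{\mathcal{A}}u=f$, i.e. $\rho\,\partial_t u-\mathcal{A}u=f$ after multiplying through by $\rho$, which is~\eqref{concrete Cauchy problem}; and $u(0)=u_0$ matches $x=u_0$. The hypotheses $u_0\in\dot{B}_{X,1}^{-s,\mathcal{S}}$ and $f\in L^1((0,T);\dot{B}_{p,1}^{-s})=L^1((0,T);\dot{B}_{X,1}^{-s,\mathcal{A}})$ are exactly those required by Theorem~\ref{Maximal regularity theorem associated with closure of A}.

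Applying Theorem~\ref{Maximal regularity theorem associated with closure of A} then yields a unique strong solution $u$ with $u\in C([0,T);\dot{B}_{X,1}^{-s,\mathcal{S}})$, $\partial_tu\in L^1((0,T);\dot{B}_{X,1}^{-s,\mathcal{S}})$, $\overline{\mathcal{A}}u\in L^1((0,T);\dot{B}_{p,1}^{-s})$, and the estimate
\begin{align*}
\|\overline{\mathcal{B}}^{-1}u\|_{L_T^\infty(\dot{B}_{p,1}^{-s})}+\|\overline{\mathcal{B}}^{-1}u',\overline{\mathcal{A}}u\|_{L_T^1(\dot{B}_{p,1}^{-s})}\lesssim\|\overline{\mathcal{B}}^{-1}u_0\|_{\dot{B}_{p,1}^{-s}}+\|f\|_{L_T^1(\dot{B}_{p,1}^{-s})}.
\end{align*}
Finally I would recognize $\overline{\mathcal{B}}^{-1}u=bu$ etc., and upgrade this to~\eqref{advantage of maximal regularity negative s} by multiplying by $\rho$: since multiplication by $\rho$ and by $b$ are bounded on $\dot{B}_{p,1}^{-s}$ with norms controlled by $m$, $\|bv\|_{\dot{B}_{p,1}^{-s}}\simeq\|v\|_{\dot{B}_{p,1}^{-s}}$ is \emph{not} automatic for rough $\rho$, so here I must instead observe that the $\dot{B}_{X,1}^{-s,\mathcal{S}}$-norm of $v$ equals $\|\rho v\|_{\dot{B}_{p,1}^{-s}}$ by the very identification from Corollary~\ref{corollary of spaces identification}~(ii), so that $\|\overline{\mathcal{B}}^{-1}u\|_{\dot{B}_{p,1}^{-s}}$ in the abstract estimate is literally $\|\rho u\|_{\dot{B}_{p,1}^{-s}}$ after we pass through the isometry $\overline{\mathcal{B}}$; concretely $\|\overline{\mathcal{B}}^{-1}u\|_{\dot{B}_{p,1}^{-s}}=\|u\|_{\dot{B}_{X,1}^{-s,\mathcal{S}}}=\|\rho u\|_{\dot{B}_{p,1}^{-s}}$ and similarly for $u_0$ and $u'$, giving~\eqref{advantage of maximal regularity negative s} with $C=C(s,m,\mu,\nu)$ (the $m$-dependence entering through the boundedness constants of $e^{t\mathcal{S}}$ and of $\overline{\mathcal{B}}^{\pm1}$, and $\mu,\nu$ through the Lam\'e case). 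The only genuinely delicate point—and the one I would write out carefully—is precisely this last bookkeeping step: making sure that the norms $\|\overline{\mathcal{B}}^{-1}\cdot\|_{\dot{B}_{p,1}^{-s}}$ appearing naturally in the abstract theorem are correctly reinterpreted as $\|\rho\,\cdot\|_{\dot{B}_{p,1}^{-s}}$ on the concrete side, rather than as $\|b\,\cdot\|$; everything else is a direct citation of the machinery built in Sections~\ref{maximal regularity for ACP section} and the preceding part of Section~\ref{concrete parabolic system section}. No new obstacle of analytic substance arises.
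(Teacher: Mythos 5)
Your proposal is correct and takes essentially the same route as the paper: after verifying Assumptions~\ref{assume A generates a bounded analytic semigroup}--\ref{assume A is injective} for the concrete pair $(\mathcal{A},\mathcal{B})$, and after the identifications $\dot{B}_{X,1}^{-s,\mathcal{A}}=\dot{B}_{p,1}^{-s}$, $\overline{\mathcal{B}}^{-1}=$ extension of multiplication by $\rho$, and $\overline{\mathcal{A}}$ as in Lemma~\ref{identify G negative s}, the corollary is a direct relabeling of Theorem~\ref{Maximal regularity theorem associated with closure of A}, which is exactly how the paper presents it. Two cosmetic slips to fix in a final write-up: the line $\overline{\mathcal{B}}^{-1}u=bu$ should read $\rho u$ (since $\mathcal{B}$ is multiplication by $b$, so $\mathcal{B}^{-1}$ is multiplication by $\rho$ — you do self-correct a few clauses later), and the chain $\|\overline{\mathcal{B}}^{-1}u\|_{\dot{B}_{p,1}^{-s}}=\|u\|_{\dot{B}_{X,1}^{-s,\mathcal{S}}}=\|\rho u\|_{\dot{B}_{p,1}^{-s}}$ should be a chain of norm \emph{equivalences} rather than equalities (Lemma~\ref{intriguing lemma} gives $\simeq$, so $\overline{\mathcal{B}}$ is a bounded isomorphism, not an isometry), which is harmless since the constant in \eqref{advantage of maximal regularity negative s} is allowed to depend on $m$, $\mu$, $\nu$.
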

Unfortunately, it is not clear whether $\|\nabla u\|_{\infty}$ can be bounded by $\|\overline{\mathcal{A}}u\|_{\dot{B}_{p,1}^{n/p-1}}$ for $n<p<\infty$. Note that an element in $\Dot{B}_{X,1}^{-s,\mathcal{S}}$ might not even be a distribution. So Theorem \ref{Maximal regularity theorem associated with Gs} and Corollary \ref{corollary of maximal regularity between concrete and abstract} may be too abstract to be useful in applications. For this, we require a little more regularity on the coefficients. Recall that $\rho$ is called a multiplier for a function space $(X,\|\cdot\|)$ if $\rho$ defines a continuous linear operator on $X$ by pointwise multiplication. If $\rho$ is a multiplier for $X$, we write $\rho\in\mathscr{M}(X)$ and define the multiplier norm by
\begin{align*}
\|\rho\|_{\mathscr{M}(X)}\vcentcolon=\sup_{\phi\in X}\|\rho\phi\|/\|\phi\|.
\end{align*}

\begin{lemma}\label{finally identify G}
{\rm (\romannumeral1)} Let $p\in(1,\infty)$ and $s\in(0,2)\cap(0,\frac{n}{p}]$. Assume that $\rho,b\in\mathscr{M}(\dot{B}_{p,1}^{s})$. Then $\mathcal{G}_s$ coincides with the operator
\begin{align}\label{bA with domain homogeneous Besov space}
b\mathcal{A}: \dot{B}_{p,1}^{s}\cap \dot{B}_{p,1}^{2+s}\subset\dot{B}_{p,1}^{s}\rightarrow\dot{B}_{p,1}^{s}.
\end{align}

{\rm (\romannumeral2)} Let $p\in(1,\infty)$ and $s\in(0,2)$. Assume that $\rho,b\in\mathscr{M}(\dot{B}_{p,1}^{-s})$. Then the space $\Dot{B}_{X,1}^{-s,\mathcal{S}}$ coincides with $\dot{B}_{p,1}^{-s}$, and the operator $\overline{\mathcal{A}}$ is given by
\begin{align}\label{A with domain homogeneous Besov space}
\mathcal{A}:\dot{B}_{p,1}^{2-s}\cap\dot{B}_{p,1}^{-s}\subset \dot{B}_{p,1}^{-s}\rightarrow\dot{B}_{p,1}^{-s}.
\end{align}
\end{lemma}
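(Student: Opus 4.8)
The strategy is to recognize $\mathcal{G}_s$ (and, in (ii), $\overline{\mathcal{A}}$ through $\overline{\mathcal{B}}$) as the \emph{unique} closed extension of the operator already known to generate the extrapolated semigroup, and to pin it down through its resolvent set; the only substantial point is an injectivity (Liouville-type) statement for the resolvent equation in homogeneous Besov spaces. For \emph{part (i)}, Lemma~\ref{G is a closure of S}(i) together with the identification $\dot{B}_{X,1}^{s,\mathcal{S}}=\dot{B}_{p,1}^{s}$ obtained earlier in this section says that $\mathcal{G}_s$ is the closure in $\dot{B}_{p,1}^{s}$ of $b\mathcal{A}|_{D(\mathcal{S}^2)}$; let $T$ denote the candidate operator \eqref{bA with domain homogeneous Besov space}. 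I would first check that $T$ is closed: by \eqref{equivalence between Lame and Laplacian}, \eqref{from Laplacian to Lame}, \eqref{from Lame to Laplacian} and the boundedness of the Hodge projectors on $\dot{B}_{p,1}^{s}$ ($1<p<\infty$), $\mathcal{A}:\dot{B}_{p,1}^{2+s}\to\dot{B}_{p,1}^{s}$ is an isomorphism whose inverse is a Fourier multiplier of order $-2$, so if $u_n\to u$ and $b\mathcal{A}u_n\to g$ in $\dot{B}_{p,1}^{s}$, then $\mathcal{A}u_n=\rho\,(b\mathcal{A}u_n)\to\rho g$ in $\dot{B}_{p,1}^{s}$ (here $\rho\in\mathscr{M}(\dot{B}_{p,1}^{s})$ enters), hence $u_n\to\mathcal{A}^{-1}(\rho g)$ in $\dot{B}_{p,1}^{2+s}$, and uniqueness of limits in $\mathscr{S}'$ forces $u=\mathcal{A}^{-1}(\rho g)\in D(T)$ with $Tu=g$. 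Next $\mathcal{G}_s\subset T$: for $u\in D(\mathcal{S}^2)\subset D(\mathcal{G}_s)$ one has $b\mathcal{A}u=\mathcal{G}_su\in\dot{B}_{p,1}^{s}$, so $\mathcal{A}u=\rho\,(b\mathcal{A}u)\in\dot{B}_{p,1}^{s}$, whence $u\in\dot{B}_{p,1}^{2+s}$; thus $b\mathcal{A}|_{D(\mathcal{S}^2)}\subset T$, and $T$ being closed, $\mathcal{G}_s\subset T$. Finally, for $T\subset\mathcal{G}_s$, fix $\lambda>0$: since $\mathcal{G}_s$ generates a bounded analytic, hence $C_0$, semigroup, $\lambda-\mathcal{G}_s$ is bijective, so for $u\in D(T)$ one writes $(\lambda-T)u=(\lambda-\mathcal{G}_s)v$ with $v\in D(\mathcal{G}_s)$, whence $(\lambda-T)(u-v)=0$ because $\mathcal{G}_s\subset T$; it therefore suffices to show that $\lambda-T$ is injective for one $\lambda>0$, and then $u=v\in D(\mathcal{G}_s)$.

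For \emph{part (ii)}, I would first identify the ambient space: by Corollary~\ref{corollary of spaces identification}(ii) and Lemmas~\ref{characterization via classic heat kernel}--\ref{characterization via heat kernel of Lame operator}, $\dot{B}_{X,1}^{-s,\mathcal{S}}$ is the completion of $R(\mathcal{S})$ for $\|\rho\,\cdot\|_{\dot{B}_{p,1}^{-s}}$; because $\rho,b\in\mathscr{M}(\dot{B}_{p,1}^{-s})$ this norm is equivalent to $\|\cdot\|_{\dot{B}_{p,1}^{-s}}$ on $R(\mathcal{S})$, and $R(\mathcal{S})=b\,R(\Delta)$ is dense in the complete space $\dot{B}_{p,1}^{-s}$ (it is $b$ applied to the dense set $R(\Delta)$, and $b$ is an isomorphism of $\dot{B}_{p,1}^{-s}$), so $\dot{B}_{X,1}^{-s,\mathcal{S}}=\dot{B}_{p,1}^{-s}$. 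Since $\overline{\mathcal{B}}$ is multiplication by $b$, a bounded invertible operator on $\dot{B}_{p,1}^{-s}$, identifying $\overline{\mathcal{A}}=\overline{\mathcal{B}}^{-1}\mathcal{G}_{-s}$ (Lemma~\ref{identify G negative s}) amounts to identifying $\mathcal{G}_{-s}$, and I would run the three steps of part (i) verbatim with $\dot{B}_{p,1}^{s}$, $\dot{B}_{p,1}^{2+s}$, $D(\mathcal{S}^2)$ replaced by $\dot{B}_{p,1}^{-s}$, $\dot{B}_{p,1}^{2-s}$, $D(\mathcal{A})\cap R(\mathcal{S})=W^{2,p}\cap b\mathcal{A}(W^{2,p})$ (note $\mathcal{A}(W^{2,p})\hookrightarrow\dot{B}_{p,1}^{-s}$ for $0<s<2$, so these are genuine subspaces of $\dot{B}_{p,1}^{-s}$), using Lemma~\ref{G is a closure of S}(ii) in place of (i).

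The hard part is the injectivity of $\lambda-b\mathcal{A}$ on the intersection of two homogeneous Besov spaces. The obstacle is that such an intersection need not embed into any $L^{r}(\R^n)$ with $r<\infty$ (already $\dot{B}_{p,1}^{n/p}$ does not), so one cannot invoke the $L^{r}$-resolvent bijectivity out of the box; one must exploit the eigen-equation itself. Concretely, if $b\mathcal{A}u=\lambda u$ with $u$ in the stated domain, then $\mathcal{A}u=\lambda\rho u$; when $s<\tfrac{n}{p}$ (part (i)) one has $\dot{B}_{p,1}^{s}\hookrightarrow L^{r}$ with $r=n/(\tfrac{n}{p}-s)\in(1,\infty)$, and in part (ii) always $\dot{B}_{p,1}^{-s}\cap\dot{B}_{p,1}^{2-s}\hookrightarrow\dot{B}_{p,1}^{0}\hookrightarrow L^{p}$, so $u\in L^{r}$ for some finite $r>1$; the Calderón--Zygmund estimate applied to $\mathcal{A}u=\lambda\rho u\in L^{r}$ then gives $u\in W^{2,r}$, i.e. $u$ lies in the domain of $b\mathcal{A}$ on $L^{r}$. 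By the Gaussian kernel bounds (Lemma~\ref{first step Gaussian upper bound} and Remark~\ref{remark on analyticity on Lp}), $b\mathcal{A}$ generates a bounded analytic semigroup on $L^{r}$, so $\lambda\in\rho(b\mathcal{A}|_{L^{r}})$ and $u=0$. The borderline case $s=\tfrac{n}{p}$ (which can occur for $\mathcal{A}=\Delta$ as well as for $\mathcal{A}=\mathcal{L}$) must be handled separately: here $\dot{B}_{p,1}^{n/p}\hookrightarrow\mathcal{C}_0$ and interior elliptic regularity give $u,\nabla u,\nabla^2u\in\mathcal{C}_0$, and for $\mathcal{A}=\Delta$ the maximum principle (using $\lambda\rho>0$) forces $u\equiv0$, while for $\mathcal{A}=\mathcal{L}$ one would argue via the self-adjointness and coercivity \eqref{coercivity of Lame operator} of $b\mathcal{L}$ on $L^{2}$ (Lemma~\ref{bL generates bounded analytic semigroup}) after a further regularity/decay step placing $u\in L^{2}(\R^n)$ with $\nabla u\in L^{2}(\R^n)$. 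It is precisely in this injectivity step — wringing enough regularity and decay out of the equation to land in a space with an injective resolvent — that the dimension restriction $n\in\{2,3\}$ in the Lamé case and the ellipticity of $-\mathcal{L}$ are genuinely used; the rest of the proof is soft operator theory plus the multiplier hypotheses.
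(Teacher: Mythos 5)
Your route is genuinely different from the paper's: you pin down $\mathcal{G}_s$ (and $\mathcal{G}_{-s}$) through the resolvent — show the candidate operator $T$ in \eqref{bA with domain homogeneous Besov space} is closed, contains the known core, and then prove $T\subset\mathcal{G}_s$ by injectivity of $\lambda-T$ for one $\lambda>0$ — whereas the paper never touches the resolvent. The paper upgrades the core of Lemma \ref{G is a closure of S}: it shows $\mathcal{G}_s$ is the closure of $\mathcal{S}$ restricted to $\{u\in D(\mathcal{S}):\mathcal{S}u\in\dot{B}_{p,1}^{s}\}$, identifies this domain (via the multiplier hypotheses) with the inhomogeneous space $B_{p,1}^{2+s}=L^p\cap\dot{B}_{p,1}^{2+s}$, observes that the operator \eqref{bA with domain homogeneous Besov space} is a closed extension, and concludes from density of $B_{p,1}^{2+s}$ in $\dot{B}_{p,1}^{s}\cap\dot{B}_{p,1}^{2+s}$ for the graph norm; part (\romannumeral2) is handled the same soft way. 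That density argument needs no spectral input, no $L^r$ embedding, and in particular has no endpoint difficulty at $s=n/p$.

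The genuine gap in your argument sits exactly at that endpoint, in the Lam\'{e} case. For $s=n/p$ your injectivity step cannot pass through any $L^r$, $r<\infty$ (as you note), and your fallback for $\mathcal{A}=\mathcal{L}$ — "a further regularity/decay step placing $u\in L^2$ with $\nabla u\in L^2$" and then coercivity \eqref{coercivity of Lame operator} — is not justified and does not follow from what you have: an element of $\dot{B}_{p,1}^{n/p}\cap\dot{B}_{p,1}^{2+n/p}$ is merely in $C_0\cap L^\infty$ together with its gradient, with no quantitative decay at infinity, and the eigen-equation $\mathcal{L}u=\lambda\rho u$ with $\rho$ only bounded does not by itself produce $L^2$ decay (a cut-off energy argument leaves boundary terms of size $R^{n-2}$ that do not vanish for $n\ge2$). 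Since there is no maximum principle for the coupled Lam\'{e} system, this case needs a real argument — e.g.\ a duality/Liouville argument pairing $\rho u$ against $e^{tb\mathcal{L}}\phi$, $\phi\in C_c^\infty$, and using the Gaussian bounds of Section \ref{heat kernel estimate section} to see $\langle\rho u,\phi\rangle e^{\lambda t}$ stays bounded — or simply the paper's density argument, which covers $s=n/p$ for free. Two smaller repairs: with $\rho$ only in $L^\infty$, interior elliptic regularity gives $u\in W^{2,q}_{loc}$ (all $q<\infty$) and $C^{1,\gamma}_{loc}$, not $\nabla^2u\in\mathcal{C}_0$, so your maximum-principle step for $\Delta$ must be run for strong solutions; and $\mathcal{A}:\dot{B}_{p,1}^{2+s}\to\dot{B}_{p,1}^{s}$ is not an isomorphism when $2+s>n/p$ (the target's preimage need not lie in $\mathscr{S}'_h$ and $\dot{B}_{p,1}^{2+s}$ is not complete), though closedness of $T$ survives via the blockwise Bernstein equivalence $u\in\dot{B}_{p,1}^{2+s}\Leftrightarrow\mathcal{A}u\in\dot{B}_{p,1}^{s}$ for $u\in\mathscr{S}'_h$. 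Your part (\romannumeral2), by contrast, is essentially sound, since $\dot{B}_{p,1}^{-s}\cap\dot{B}_{p,1}^{2-s}\hookrightarrow L^p$ always holds there.
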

\begin{proof}
{\rm (\romannumeral1)} First, along the same lines of the proof of Lemma \ref{G is a closure of S} {\rm (\romannumeral1)}, we can show that $\mathcal{G}_{s}$ is the closure of 
\begin{align}\label{S with domain inhomogeneous Besov space}
\mathcal{S}:\{u\in D(\mathcal{S})|\mathcal{S}u\in\dot{B}_{p,1}^{s}\}\subset\dot{B}_{p,1}^{s}\rightarrow\dot{B}_{p,1}^{s}.   
\end{align}
Since $\rho,b\in\mathscr{M}(\dot{B}_{p,1}^{s})$, we can identify $\{u\in D(\mathcal{S})|\mathcal{S}u\in\dot{B}_{p,1}^{s}\}=\{u\in W^{2,p}|b\mathcal{A}u\in\dot{B}_{p,1}^{s}\}$ as the inhomogeneous Besov space $B_{p,1}^{2+s}=L^p\cap\Dot{B}_{p,1}^{2+s}$. On the other hand, it is easy to see that the operator $b\mathcal{A}$ defined in \eqref{bA with domain homogeneous Besov space} is closed and is an extension of the operator $\mathcal{S}$ defined in \eqref{S with domain inhomogeneous Besov space}. The desired result then follows from the fact that $B_{p,1}^{2+s}$ is dense in $\dot{B}_{p,1}^{s}\cap \dot{B}_{p,1}^{2+s}$.

{\rm (\romannumeral2)} Let us first refine several results in Section \ref{maximal regularity for ACP section}. Using \eqref{equivalence of norms with negative regularity} and the fact that $R(\mathcal{A})=\mathcal{A}(W^{2,p})$ is dense in $\Dot{B}_{p,1}^{-s}$, we can verify that $\Dot{B}_{X,1}^{-s,\mathcal{S}}$ agrees with the completion of 
\begin{align*}
\mathscr{D}_{-s}\vcentcolon=\{u\in L^p|\|\rho u\|_{\Dot{B}_{p,1}^{-s}}<\infty\}  
\end{align*}
with respect to the norm $\|\rho\cdot\|_{\Dot{B}_{p,1}^{-s}}$. Then \eqref{uniform bounded in B negative s} holds for every $u\in \mathscr{D}_{-s}$, so $\mathcal{T}_{-s}(t)$ is the continuous extension of $e^{t\mathcal{S}}|_{\mathscr{D}_{-s}}$ to $\Dot{B}_{X,1}^{-s,\mathcal{S}}$. From this, we can follow the same lines as the proof of Lemma \ref{G is a closure of S} {\rm (\romannumeral2)} to show that $\mathcal{G}_{-s}$ is the closure of 
\begin{align*}
b\mathcal{A}:W^{2,p}\cap\Dot{B}_{X,1}^{-s,\mathcal{S}}\subset\Dot{B}_{X,1}^{-s,\mathcal{S}}\rightarrow\Dot{B}_{X,1}^{-s,\mathcal{S}}.  
\end{align*}

Now assuming $\rho,b\in\mathscr{M}(\dot{B}_{p,1}^{-s})$, it is easy to see that $\Dot{B}_{X,1}^{-s,\mathcal{S}}$ coincides with $\dot{B}_{p,1}^{-s}$. So $\rho\mathcal{G}_{-s}$ is the closure of
\begin{align*}
\mathcal{A}:W^{2,p}\cap\Dot{B}_{p,1}^{-s}\subset\Dot{B}_{p,1}^{-s}\rightarrow\Dot{B}_{p,1}^{-s}.  
\end{align*}
But it is not difficult to see that the closure of the above defined operator is the one defined by \eqref{A with domain homogeneous Besov space}. This completes the proof.
\end{proof}

Finally, we obtain a concrete version of maximal $L^1$ regularity for \eqref{concrete Cauchy problem}.

\begin{theorem}\label{concrete maximal regularity theorem}
Let $p\in(1,\infty)$, $s\in(0,2)$ and $T\in(0,\infty]$. Let $\rho$ satisfy \eqref{density bounds for Lame system} and $b=\rho^{-1}$.

{\rm (\romannumeral1)} Assume that $s\le\frac{n}{p}$ and $\rho,b\in\mathscr{M}(\dot{B}_{p,1}^{s})$. Then for $u_0\in\dot{B}_{p,1}^{s}$ and $f\in L^1((0,T);\dot{B}_{p,1}^{s})$, the equation \eqref{concrete Cauchy problem} has a unique strong solution $u\in C([0,T);\dot{B}_{p,1}^{s})$ satisfying
\begin{align*}
\|u\|_{L_T^\infty(\dot{B}_{p,1}^{s})}+\|\partial_tu,\mathcal{A}u\|_{L_T^1(\dot{B}_{p,1}^{s})}\le C\|u_0\|_{\dot{B}_{p,1}^{s}}+C\|f\|_{L_T^1(\dot{B}_{p,1}^{s})}
\end{align*}
for some constant $C$ depending on $s,m,\mu,\nu,\|\rho\|_{\mathscr{M}(\dot{B}_{p,1}^{s})}$, and $\|b\|_{\mathscr{M}(\dot{B}_{p,1}^{s})}$.

{\rm (\romannumeral2)} Assume $\rho,b\in\mathscr{M}(\dot{B}_{p,1}^{-s})$. If $u_0\in\dot{B}_{p,1}^{-s}$ and $f\in L^1((0,T);\dot{B}_{p,1}^{-s})$, then \eqref{concrete Cauchy problem} has a unique strong solution $u\in C([0,T);\dot{B}_{p,1}^{-s})$ satisfying
\begin{align*}
\|u\|_{L_T^\infty(\dot{B}_{p,1}^{-s})}+\|\partial_tu,\mathcal{A}u\|_{L_T^1(\dot{B}_{p,1}^{-s})}\le C\|u_0\|_{\dot{B}_{p,1}^{-s}}+C\|f\|_{L_T^1(\dot{B}_{p,1}^{-s})}
\end{align*}
for some constant $C$ depending on $s,m,\mu,\nu,\|\rho\|_{\mathscr{M}(\dot{B}_{p,1}^{-s})}$, and $\|b\|_{\mathscr{M}(\dot{B}_{p,1}^{-s})}$.
\end{theorem}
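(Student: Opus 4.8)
The plan is to obtain the concrete statement by unpacking the abstract results of Section~\ref{maximal regularity for ACP section} through the identifications established in Section~\ref{concrete parabolic system section} and in Lemma~\ref{finally identify G}. Throughout I would rewrite \eqref{concrete Cauchy problem} in the abstract form
\[
\partial_t u-\mathcal{S}u=bf,\qquad u(0)=u_0,\qquad\mathcal{S}=b\mathcal{A},
\]
so that the inhomogeneous term feeding the abstract theory is $bf$, not $f$.

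For Part~{\rm (\romannumeral1)}, I would first note that under the restriction $s\le\frac np$ (which makes $\dot{B}_{p,1}^{s}$ complete) the space $\dot{B}_{X,1}^{s,\mathcal{S}}$ was already identified with $\dot{B}_{p,1}^{s}$ in Section~\ref{concrete parabolic system section}, while by Lemma~\ref{finally identify G}~{\rm (\romannumeral1)} the generator $\mathcal{G}_s$ is precisely $b\mathcal{A}$ on $\dot{B}_{p,1}^{s}\cap\dot{B}_{p,1}^{2+s}$. Since $b\in\mathscr{M}(\dot{B}_{p,1}^{s})$, one has $bf\in L^1((0,T);\dot{B}_{p,1}^{s})$ with $\|bf\|_{\dot{B}_{p,1}^{s}}\le\|b\|_{\mathscr{M}(\dot{B}_{p,1}^{s})}\|f\|_{\dot{B}_{p,1}^{s}}$, so Theorem~\ref{Maximal regularity theorem associated with Gs}~{\rm (\romannumeral1)} applied with data $x=u_0$ and source $bf$ yields a unique strong solution $u\in C([0,T);\dot{B}_{p,1}^{s})$ with $u'$ and $\mathcal{G}_su=b\mathcal{A}u$ in $L^1((0,T);\dot{B}_{p,1}^{s})$ together with the corresponding bound. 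It then remains to convert the bound on $\mathcal{G}_su$ into one on $\mathcal{A}u$, which follows from $\rho\in\mathscr{M}(\dot{B}_{p,1}^{s})$ via
\[
\|\mathcal{A}u\|_{\dot{B}_{p,1}^{s}}=\|\rho\,(b\mathcal{A}u)\|_{\dot{B}_{p,1}^{s}}\le\|\rho\|_{\mathscr{M}(\dot{B}_{p,1}^{s})}\|\mathcal{G}_su\|_{\dot{B}_{p,1}^{s}};
\]
uniqueness in the concrete class is inherited from the abstract uniqueness together with these identifications.

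For Part~{\rm (\romannumeral2)}, no further restriction on $s\in(0,2)$ is needed since $\dot{B}_{p,1}^{-s}$ is always complete. I would invoke Lemma~\ref{finally identify G}~{\rm (\romannumeral2)}: the hypothesis $\rho,b\in\mathscr{M}(\dot{B}_{p,1}^{-s})$ gives $\dot{B}_{X,1}^{-s,\mathcal{S}}=\dot{B}_{p,1}^{-s}$ and realizes $\overline{\mathcal{A}}$ as $\mathcal{A}$ on $\dot{B}_{p,1}^{2-s}\cap\dot{B}_{p,1}^{-s}$, and the extensions $\overline{\mathcal{B}},\overline{\mathcal{B}}^{-1}$ reduce to multiplication by $b$ and $\rho$, bounded on $\dot{B}_{p,1}^{-s}$. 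Corollary~\ref{corollary of maximal regularity between concrete and abstract} (equivalently Theorem~\ref{Maximal regularity theorem associated with closure of A}) then produces a unique strong solution satisfying
\[
\|\rho u\|_{L_T^\infty(\dot{B}_{p,1}^{-s})}+\|\rho u',\overline{\mathcal{A}}u\|_{L_T^1(\dot{B}_{p,1}^{-s})}\le C\|\rho u_0\|_{\dot{B}_{p,1}^{-s}}+C\|f\|_{L_T^1(\dot{B}_{p,1}^{-s})}.
\]
Because $\rho$ and $b=\rho^{-1}$ are both multipliers on $\dot{B}_{p,1}^{-s}$, the norms $\|\cdot\|_{\dot{B}_{p,1}^{-s}}$ and $\|\rho\,\cdot\|_{\dot{B}_{p,1}^{-s}}$ are equivalent, so replacing $\rho u,\rho u',\rho u_0$ by $u,u',u_0$ and $\overline{\mathcal{A}}u$ by $\mathcal{A}u$, with the two multiplier norms absorbed into $C$, gives the asserted inequality.

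I expect that there is no genuinely hard analytic step remaining: the Gaussian bounds of Section~\ref{heat kernel estimate section} (securing Assumption~\ref{assume BA generates a bounded analytic semigroup} for $b\mathcal{L}$), the abstract maximal $L^1$ theory of Section~\ref{maximal regularity for ACP section}, and the operator/space identifications of Lemma~\ref{finally identify G} already carry the weight, so the task is careful bookkeeping. The part that needs the most care is verifying that $\rho,b\in\mathscr{M}(\dot{B}_{p,1}^{\pm s})$ is exactly the hypothesis making the spaces $\dot{B}_{X,1}^{\pm s,\mathcal{S}}$ and the operators $\mathcal{G}_{\pm s},\overline{\mathcal{A}}$ concrete and allowing one to pass freely between the $\mathcal{S}=b\mathcal{A}$ normalization and the $\mathcal{A}$ normalization of the solution, and checking that the abstract notion of strong solution transfers to the concrete one under these identifications.
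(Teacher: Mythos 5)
Your proposal is correct and follows essentially the same route as the paper: part (i) is Theorem \ref{Maximal regularity theorem associated with Gs} (i) combined with the identification $\dot{B}_{X,1}^{s,\mathcal{S}}=\dot{B}_{p,1}^{s}$ and Lemma \ref{finally identify G} (i), and part (ii) is Corollary \ref{corollary of maximal regularity between concrete and abstract} combined with Lemma \ref{finally identify G} (ii), with the multiplier hypotheses on $\rho$ and $b$ absorbing the passage between the $b\mathcal{A}$ and $\mathcal{A}$ normalizations exactly as you describe. The only difference is that you spell out the bookkeeping (source term $bf$, conversion of $\mathcal{G}_su$ to $\mathcal{A}u$, equivalence of $\|\cdot\|$ and $\|\rho\,\cdot\|$) that the paper leaves implicit.
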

\begin{proof}
The first part follows from Theorem \ref{Maximal regularity theorem associated with Gs} {\rm (\romannumeral1)}, the equivalence between $\dot{B}_{X,1}^{s,\mathcal{S}}$ and $\dot{B}_{p,1}^{s}$, and Lemma \ref{finally identify G} {\rm (\romannumeral1)}. The second part follows from Corollary \ref{corollary of maximal regularity between concrete and abstract} and Lemma \ref{finally identify G} {\rm (\romannumeral2)}.
\end{proof}

\bigskip

\section{An application to pressureless flows}\label{compressible pressureless flow section}

\bigskip

In this section, we study the global-in-time well-posedness for the pressureless flow
\begin{eqnarray}\label{compressible pressureless flow}
\left\{\begin{aligned}
&\partial_t\rho+\divg(\rho u)=0, &\mathrm{in}\ (0,\infty)\times\R^n,\\
&\rho(\partial_tu+u\cdot\nabla u)-\mathcal{L}u=0,\ &\mathrm{in}\ (0,\infty)\times\R^n,\\
&(\rho,u)|_{t=0}=(\rho_0,u_0), &\mathrm{on}\ \R^n,
\end{aligned}\right.
\end{eqnarray}
where $\mathcal{L}$ is the Lam\'{e} operator defined in \eqref{Lame operator} with coefficients satisfying \eqref{Lame coefficients}. The structure of our proof is in the spirit of the one established in \cite{danchin cpam 2012}. But the substantial progress we make is the removal of the smallness assumption on the fluctuation of the initial density.

In this section, we always assume that
\begin{assumption}\label{initial data assumption}
Let $n\in\{2,3\}$, $p\in(1,2n)\setminus\{n\}$, $\rho_0$ satisfy \eqref{density bounds for Lame system}, $u_0\in\Dot{B}_{p,1}^{n/p-1}=(\Dot{B}_{p,1}^{n/p-1}(\R^n))^n$, and $\rho_0,\rho_0^{-1}\in\mathscr{M}(\Dot{B}_{p,1}^{n/p-1})$.
\end{assumption}

Let us be clear about what it means by a solution to the system \eqref{compressible pressureless flow}.
\begin{definition}
The unknown $(\rho,u)$ is called a global-in-time solution to \eqref{compressible pressureless flow} if 
\begin{align*}
\rho\in L^\infty(\R_+\times\R^n)\cap L^\infty(\R_+;\mathscr{M}(\Dot{B}_{p,1}^{n/p-1})),\\
u\in C([0,\infty);\Dot{B}_{p,1}^{n/p-1}),\ (\partial_tu,\mathcal{L}u)\in \left(L^1(\R_+;\Dot{B}_{p,1}^{n/p-1})\right)^2,
\end{align*}
$\rho$ is a weak solution to the continuity equation of \eqref{compressible pressureless flow} (i.e., $\rho$ satisfies \eqref{continuity equation} in the sense of distribution), $(\rho,u)$ satisfies the momentum equation of \eqref{compressible pressureless flow} for a.e. $t\in(0,\infty)$, $u(0)=u_0$, and $\rho(t)\stackrel{\ast}{\rightharpoonup}\rho_0$ in $L^\infty(\R^n)$ as $t\rightarrow0^+$.
\end{definition}

The main result in the section is the following
\begin{theorem}\label{global well posedness of compressible pressureless flow}
Assuming Assumption \ref{initial data assumption}, there exists a positive constant $c$ depending on $m,p,n,\mu,\nu,\|\rho_0\|_{\mathscr{M}(\Dot{B}_{p,1}^{n/p-1})}$ and $\|\rho_0^{-1}\|_{\mathscr{M}(\Dot{B}_{p,1}^{n/p-1})}$ such that if $\|u_0\|_{\Dot{B}_{p,1}^{n/p-1}}\le c$, then \eqref{compressible pressureless flow} has a unique global-in-time solution.
\end{theorem}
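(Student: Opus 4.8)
The plan is to convert \eqref{compressible pressureless flow} into a parabolic system via a Lagrangian change of variables and then solve the resulting system by a contraction argument carried out \emph{directly on the whole half-line} $[0,\infty)$, which is made possible by the global-in-time maximal $L^1$ regularity of Theorem \ref{concrete maximal regularity theorem} for $\mathcal{A}=\mathcal{L}$. Set $s\vcentcolon=|n/p-1|$; since $n\in\{2,3\}$ and $p\in(1,2n)\setminus\{n\}$ one checks $s\in(0,2)$, and moreover $s\le n/p$ when $p<n$, so that Theorem \ref{concrete maximal regularity theorem}(i) applies in $\dot B_{p,1}^{n/p-1}$ when $p<n$ and Theorem \ref{concrete maximal regularity theorem}(ii) applies when $p>n$, in both cases under exactly the multiplier hypothesis of Assumption \ref{initial data assumption}. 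I would work in the space
\begin{align*}
E\vcentcolon=\Big\{v\in C([0,\infty);\dot B_{p,1}^{n/p-1})\ :\ \partial_tv,\ \mathcal{L}v\in L^1(\R_+;\dot B_{p,1}^{n/p-1})\Big\},
\end{align*}
equipped with $\|v\|_E\vcentcolon=\|v\|_{L^\infty(\dot B_{p,1}^{n/p-1})}+\|\partial_tv,\mathcal{L}v\|_{L^1(\dot B_{p,1}^{n/p-1})}$, and record that, by \eqref{equivalence between Lame and Laplacian} together with the identity \eqref{from Lame to Laplacian} (so that $\mathcal{L}v\in L^1(\R_+;\dot B_{p,1}^{n/p-1})$ is equivalent to $v\in L^1(\R_+;\dot B_{p,1}^{n/p+1})$), one has $\nabla v\in L^1(\R_+;\dot B_{p,1}^{n/p})\hookrightarrow L^1(\R_+;L^\infty)$ with norm $\lesssim\|v\|_E$.

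Next I would set up the Lagrangian formulation. Given $v$, let $X_v(t,y)=y+\int_0^tv(\tau,y)\,d\tau$, $A_v=(\nabla_yX_v)^{-1}$ and $J_v=\det\nabla_yX_v$; as long as $\int_0^\infty\|\nabla v(\tau)\|_\infty\,d\tau$ is small (guaranteed below), $X_v(t,\cdot)$ is a global bi-Lipschitz diffeomorphism of $\R^n$ for every $t$. The system \eqref{compressible pressureless flow} is then equivalent, in these coordinates, to
\begin{align*}
\rho_0(y)\,\partial_tv-\mathcal{L}v=\mathcal{N}(v),\qquad v(0)=u_0,
\end{align*}
where $\mathcal{N}(v)$ is the deformation remainder obtained by subtracting $\mathcal{L}v$ from the Lam\'{e} operator with coefficients deformed through $A_v$ and weighted by $J_v$: it is linear in $\nabla^2v$, depends analytically on $\nabla_yX_v-\mathrm{Id}=\int_0^t\nabla v\,d\tau$, and vanishes to second order at $v=0$. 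I would then define $\Phi:E\to E$ by letting $\Phi(v)$ be the unique strong solution furnished by Theorem \ref{concrete maximal regularity theorem} of the linear problem $\rho_0\partial_t\bar v-\mathcal{L}\bar v=\mathcal{N}(v)$, $\bar v(0)=u_0$, so that
\begin{align*}
\|\Phi(v)\|_E\le C_0\|u_0\|_{\dot B_{p,1}^{n/p-1}}+C_0\big\|\mathcal{N}(v)\big\|_{L^1(\dot B_{p,1}^{n/p-1})},
\end{align*}
with $C_0$ depending only on $m,p,n,\mu,\nu$ and the multiplier norms of $\rho_0,\rho_0^{-1}$.

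The heart of the matter is the pair of nonlinear bounds
\begin{align*}
\big\|\mathcal{N}(v)\big\|_{L^1(\dot B_{p,1}^{n/p-1})}\lesssim\|v\|_E^2,\qquad
\big\|\mathcal{N}(v_1)-\mathcal{N}(v_2)\big\|_{L^1(\dot B_{p,1}^{n/p-1})}\lesssim\big(\|v_1\|_E+\|v_2\|_E\big)\|v_1-v_2\|_E,
\end{align*}
valid whenever $\|v\|_E,\|v_i\|_E$ are small. These rest on the product laws of Section \ref{preliminary section} (this is where $p<2n$ enters), on $\dot B_{p,1}^{n/p}$ being an algebra, on composition estimates for the smooth matrix-valued maps $z\mapsto(\mathrm{Id}+z)^{-1}$ and $z\mapsto\det(\mathrm{Id}+z)$, and on the uniform smallness $\sup_t\|\nabla_yX_v(t)-\mathrm{Id}\|_{\dot B_{p,1}^{n/p}}\le\|v\|_E$; the two derivatives falling on $v$ are absorbed by $\|\mathcal{L}v\|_{L^1(\dot B_{p,1}^{n/p-1})}$, while the small coefficients are measured in $L^\infty_t(\dot B_{p,1}^{n/p})$. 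Granting them, if $\|u_0\|_{\dot B_{p,1}^{n/p-1}}\le c$ with $c$ small (depending only on the quantities listed in the statement), then $\Phi$ maps the ball $\{v\in E:\|v\|_E\le 2C_0c\}$ into itself and contracts it; its fixed point $v$ is the Lagrangian velocity. Returning to Eulerian variables through $u(t,x)=v\big(t,X_v^{-1}(t,x)\big)$ and $\rho(t,x)=\rho_0\big(X_v^{-1}(t,x)\big)/J_v\big(t,X_v^{-1}(t,x)\big)$, the transport structure gives the continuity equation in the weak sense, the bi-Lipschitz bounds on $X_v$ together with $\rho_0,\rho_0^{-1}\in\mathscr{M}(\dot B_{p,1}^{n/p-1})$ give $\rho\in L^\infty(\R_+\times\R^n)\cap L^\infty(\R_+;\mathscr{M}(\dot B_{p,1}^{n/p-1}))$, the regularity class of $v$ transfers to $u$, and the initial data are attained ($u(0)=u_0$, $\rho(t)\stackrel{\ast}{\rightharpoonup}\rho_0$ by continuity of the flow in $t$); uniqueness in the Eulerian class follows from uniqueness of the fixed point and the standard equivalence of the two formulations when the flow map is a bi-Lipschitz diffeomorphism.

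The step I expect to be the main obstacle is the nonlinear estimate on $\mathcal{N}(v)$ in the low-regularity space $\dot B_{p,1}^{n/p-1}$ — a space of negative regularity when $p>n$ — since one must extract the quadratic gain solely from the smallness of $\int_0^t\nabla v$ in $L^\infty_t(\dot B_{p,1}^{n/p})$ while paying for two spatial derivatives on $v$, and must do so with constants that do not see the size of the density fluctuation; this is precisely what the multiplier formulation in Assumption \ref{initial data assumption} and the range $p\in(1,2n)\setminus\{n\}$ are tailored to accommodate.
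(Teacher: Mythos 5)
Your proposal follows essentially the same route as the paper: rewrite \eqref{compressible pressureless flow} in Lagrangian coordinates so that the coefficient becomes the time-independent $\rho_0$, run a global-in-time contraction in the space $E_p$ built on $\dot B_{p,1}^{n/p-1}$ using Theorem \ref{concrete maximal regularity theorem}, with the quadratic and difference bounds on the deformation remainder supplied by the flow estimates \eqref{flow estimate for A minus I}--\eqref{flow estimate for difference of A} and the product laws (exactly the ingredients you list), and then return to Eulerian variables and deduce uniqueness from uniqueness of the Lagrangian fixed point. This matches the paper's proof (Theorem \ref{global well posedness of compressible pressureless flow in Lagrangian coordinates} plus the change back of variables), and your Eulerian density formula with the Jacobian factor is the one consistent with $J\lagr{\rho}\equiv\rho_0$.
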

\begin{remark}
The above theorem holds without constraint on the dimensions if $\mathcal{L}$ is replaced by $\Delta$.
\end{remark}

Firstly, we shall convert \eqref{compressible pressureless flow} into its Lagrangian formulation. Assume temporarily that $u=u(t,x)$ is a $C^1$ vector field, namely,
\begin{align*}
u\in L_{loc}^1(\R_+;C_b^{1}(\R^n;\R^n)).
\end{align*}
By virtue of Cauchy-Lipschitz theorem, the unique trajectory $X(t,\cdot)$ of $u$, defined by the ODE
\begin{eqnarray}\label{define trajectory via Eulerian velocity}
\left\{\begin{aligned}
&\frac{d}{dt}X(t,y)=u(t,X(t,y)),\\
&X(0,y)=y,
\end{aligned}\right.
\end{eqnarray}
is a $C^1$-diffeomorphism over $\R^n$ for every $t\ge0$. Let us introduce $A(t,y)=\big(D_y X(t,y)\big)^{-1}$, $J(t,y)=\det DX(t,y)$, and $\mathscr{A}(t,y)=\adj DX(t,y)$ (the adjugate of $D X$, i.e.,  $\mathscr{A}=JA$). For any scalar function $\phi=\phi(x)$ and any vector field $v=v(x)$, it is easy to see that
\begin{align}\label{change of variable for gradient}
(\nabla\phi)\circ X=A^{\intercal}\nabla(\phi\circ X),
\end{align}
and
\begin{align}\label{change of variable for divergence1}
(\divg v)\circ X=\mathrm{Tr}[AD(v\circ X)],
\end{align}
where $\mathrm{Tr}$ denotes the trace of a square matrix. On the other hand, using an integration by part argument as in the appendix of \cite{danchin cpam 2012}, we also have
\begin{align}\label{change of variable for divergence2}
(\divg v)\circ X=J^{-1}\divg(\mathscr{A}(v\circ X)).
\end{align}
Applying \eqref{change of variable for gradient} and \eqref{change of variable for divergence1}, we see that
\begin{align}\label{change of variable for gradient divergence}
(\nabla\divg v)\circ X=A^{\intercal}\nabla\mathrm{Tr}(AD(v\circ X)).
\end{align}
By writing $\Delta=\divg\nabla$, we get from \eqref{change of variable for gradient} and \eqref{change of variable for divergence2} that
\begin{align}\label{change of variable for Laplacian}
(\Delta v)\circ X=J^{-1}\divg(\mathscr{A}A^{\intercal}\nabla(v\circ X)).
\end{align}

Now we introduce new unknowns in Lagrangian coordinates and always denote them by bold letters. So, we define
\begin{align}\label{from Eulerian to Lagrangian}
(\lagr{\rho},\lagr{u})(t,y)=(\rho,u)\big(t,X(t,y)\big).
\end{align}
The continuity equation in \eqref{compressible pressureless flow} has a unique weak solution $\rho\in L^\infty(\R_+\times\R^n)$ such that $J\lagr{\rho}\equiv\rho_0$ (see, e.g., {\cite[Proposition~2.1]{ambrosio lecture 2005}}). Using \eqref{change of variable for gradient divergence}, \eqref{change of variable for Laplacian} and the chain rule, one can formally convert the system \eqref{compressible pressureless flow} into its Lagrangian formulation that reads
\begin{eqnarray}\label{Lagrangian formulation of compressible pressureless flow}
\left\{\begin{aligned}
&\rho_0\partial_t \lagr{u}-\mu\divg(\mathscr{A}_{\lagr{u}}A_{\lagr{u}}^{\intercal}\nabla\lagr{u})-(\mu+\lambda)\mathscr{A}_{\lagr{u}}^{\intercal}\nabla\mathrm{Tr}(A_{\lagr{u}}D\lagr{u})=0,\\
&\lagr{u}|_{t=0}=u_0,
\end{aligned}\right.
\end{eqnarray}
where we associate $\mathscr{A}_{\lagr{u}}$ and $A_{\lagr{u}}$ with the new velocity $\lagr{u}$, namely,
\begin{align*}
\mathscr{A}_{\lagr{u}}=\adj DX_{\lagr{u}},\ \ \mathrm{and}\ \ A_{\lagr{u}}=(DX_{\lagr{u}}(t,y))^{-1}
\end{align*}
with
\begin{align}\label{define trajectory via Lagrangian velocity}
X_{\lagr{u}}(t,y)=y+\int_0^t \lagr{u}(\tau,y)\,d\tau.
\end{align}
We shall prove the well-posedness of the highly nonlinear system \eqref{Lagrangian formulation of compressible pressureless flow} using the contraction mapping theorem. Thanks to the linear theory established in Theorem \ref{concrete maximal regularity theorem}, this can be done by rewriting \eqref{Lagrangian formulation of compressible pressureless flow} as
\begin{equation*}
\rho_0\partial_t \lagr{u}-\mathcal{L}\lagr{u}=f(\lagr{u}),  
\end{equation*}
where
\begin{equation*}
f(\lagr{u})=\mu\divg((\mathscr{A}_{\lagr{u}}A_{\lagr{u}}^{\intercal}-I)\nabla\lagr{u})+(\mu+\lambda)\{(\mathscr{A}_{\lagr{u}}^{\intercal}-I)\nabla\mathrm{Tr}(A_{\lagr{u}}D\lagr{u})+\nabla\mathrm{Tr}((A_{\lagr{u}}-I)D\lagr{u})\}. 
\end{equation*}

To bound the nonlinear terms, we need the following

\begin{lemma}[see \cite{danchin cpam 2012, danchin aif 2014}]
Let $\lagr{v}$ be a vector field in $C([0,\infty);\dot{B}_{p,1}^{n/p-1})\cap L^1(\R_+;\dot{B}_{p,1}^{n/p+1})$ and satisfy
\begin{align}\label{smallness on besov norm of v}
\|\nabla\lagr{v}\|_{L^{1}(\dot{B}_{p,1}^{n/p})}\le c_0
\end{align}
for some constant $c_0$. It holds that
\begin{align}\label{flow estimate for A minus I}
\|A_{\lagr{v}}-I\|_{L^\infty(\dot{B}_{p,1}^{n/p})}+\|\mathscr{A}_{\lagr{v}}-I\|_{L^\infty(\dot{B}_{p,1}^{n/p})}\lesssim\|\nabla\lagr{v}\|_{L^1(\dot{B}_{p,1}^{n/p})}.
\end{align}
Let $\lagr{v}_1$ and $\lagr{v}_2$ be two vector fields satisfying the same conditions as $\lagr{v}$, and let $\delta\lagr{v}=\lagr{v}_1-\lagr{v}_2$. Then we have
\begin{align}\label{flow estimate for difference of A}
\|A_{\lagr{v}_1}-A_{\lagr{v}_2}\|_{L^\infty(\dot{B}_{p,1}^{n/p})}+\|\mathscr{A}_{\lagr{v}_1}-\mathscr{A}_{\lagr{v}_2}\|_{L^\infty(\dot{B}_{p,1}^{n/p})}\lesssim\|\nabla\delta\lagr{v}\|_{L^1(\dot{B}_{p,1}^{n/p})}.
\end{align}
\end{lemma}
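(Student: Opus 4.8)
The plan is to reduce both assertions to algebraic manipulations inside the Banach algebra $\dot{B}_{p,1}^{n/p}$: recall from Section \ref{preliminary section} that the product law $\|uv\|_{\dot{B}_{p,1}^{n/p}}\lesssim\|u\|_{\dot{B}_{p,1}^{n/p}}\|v\|_{\dot{B}_{p,1}^{n/p}}$ holds for $1\le p<\infty$, and that $\dot{B}_{p,1}^{n/p}$ is complete. First I would set
\begin{align*}
Z:=DX_{\lagr{v}}-I=\int_0^t D\lagr{v}(\tau,\cdot)\,d\tau,
\end{align*}
so that $A_{\lagr{v}}=(I+Z)^{-1}$ and $\mathscr{A}_{\lagr{v}}=\adj(I+Z)$. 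Since $\lagr{v}\in L^1(\R_+;\dot{B}_{p,1}^{n/p+1})$, the $\dot{B}_{p,1}^{n/p}$-norm commutes with time integration (Minkowski's integral inequality), so for every $t\ge0$ one has $Z(t)\in\dot{B}_{p,1}^{n/p}$ with
\begin{align*}
\|Z(t)\|_{\dot{B}_{p,1}^{n/p}}\le\int_0^t\|\nabla\lagr{v}(\tau)\|_{\dot{B}_{p,1}^{n/p}}\,d\tau\le\|\nabla\lagr{v}\|_{L^1(\dot{B}_{p,1}^{n/p})}\le c_0.
\end{align*}

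To prove \eqref{flow estimate for A minus I}, I would fix the absolute constant $c_0$ (depending only on $p,n$) small enough that $c_0$ times the algebra constant is $<1/2$; then the Neumann series $A_{\lagr{v}}-I=\sum_{k\ge1}(-Z)^k$ converges in $\dot{B}_{p,1}^{n/p}$ and yields $\|A_{\lagr{v}}-I\|_{\dot{B}_{p,1}^{n/p}}\lesssim\|Z\|_{\dot{B}_{p,1}^{n/p}}$. For the adjugate, each entry of $\adj(I+Z)$ is a polynomial in the entries of $Z$ of degree $\le n-1$ whose constant part reproduces $I$; since $n\le3$, every nonconstant monomial is a product of one or two entries of $Z$, so the algebra property gives $\|\mathscr{A}_{\lagr{v}}-I\|_{\dot{B}_{p,1}^{n/p}}\lesssim\|Z\|_{\dot{B}_{p,1}^{n/p}}(1+\|Z\|_{\dot{B}_{p,1}^{n/p}})\lesssim\|Z\|_{\dot{B}_{p,1}^{n/p}}$. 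Taking $\sup_{t\ge0}$ and bounding $\|Z\|_{\dot{B}_{p,1}^{n/p}}$ by $\|\nabla\lagr{v}\|_{L^1(\dot{B}_{p,1}^{n/p})}$ gives \eqref{flow estimate for A minus I}.

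For the Lipschitz bounds \eqref{flow estimate for difference of A}, set $Z_i:=DX_{\lagr{v}_i}-I$ and $\delta Z:=Z_1-Z_2=\int_0^t D\delta\lagr{v}(\tau,\cdot)\,d\tau$, so $\|\delta Z(t)\|_{\dot{B}_{p,1}^{n/p}}\le\|\nabla\delta\lagr{v}\|_{L^1(\dot{B}_{p,1}^{n/p})}$. I would use the resolvent identity
\begin{align*}
A_{\lagr{v}_1}-A_{\lagr{v}_2}&=-(I+Z_1)^{-1}\,\delta Z\,(I+Z_2)^{-1}\\
&=-\big(I+(A_{\lagr{v}_1}-I)\big)\,\delta Z\,\big(I+(A_{\lagr{v}_2}-I)\big),
\end{align*}
which expands into $-\delta Z$ plus three terms each carrying at least one factor $A_{\lagr{v}_i}-I$; by the algebra property and the first part each of these is $\lesssim c_0\,\|\delta Z\|_{\dot{B}_{p,1}^{n/p}}$, so $\|A_{\lagr{v}_1}-A_{\lagr{v}_2}\|_{\dot{B}_{p,1}^{n/p}}\lesssim\|\delta Z\|_{\dot{B}_{p,1}^{n/p}}$. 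For the adjugate difference, write $\mathscr{A}_{\lagr{v}_i}=I+P(Z_i)$ with $P$ a matrix polynomial of degree $\le n-1\le2$ and no constant term, and apply the telescoping identity: $P(Z_1)-P(Z_2)$ is a sum of monomials each containing exactly one factor $\delta Z$ and at most one further factor $Z_1$ or $Z_2$, whence $\|\mathscr{A}_{\lagr{v}_1}-\mathscr{A}_{\lagr{v}_2}\|_{\dot{B}_{p,1}^{n/p}}\lesssim\|\delta Z\|_{\dot{B}_{p,1}^{n/p}}(1+c_0)$. Passing to $\sup_{t\ge0}$ and using $\|\delta Z\|_{\dot{B}_{p,1}^{n/p}}\le\|\nabla\delta\lagr{v}\|_{L^1(\dot{B}_{p,1}^{n/p})}$ completes the argument.

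The only delicate point — and the main, if mild, obstacle — is that the constant matrix $I$ does \emph{not} belong to the homogeneous space $\dot{B}_{p,1}^{n/p}$, so one must systematically work with the differences $A_{\lagr{v}}-I$, $\mathscr{A}_{\lagr{v}}-I$, $A_{\lagr{v}_i}-I$ and verify that every product appearing after expansion of the Neumann series, the adjugate polynomial, and the resolvent identity is a genuine product of two elements of the algebra. Once $c_0$ is fixed below an absolute threshold so that the Neumann series converges, and given the algebra and completeness of $\dot{B}_{p,1}^{n/p}$ (both guaranteed by $1\le p<\infty$ as in Assumption \ref{initial data assumption}), all remaining estimates are routine.
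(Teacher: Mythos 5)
The paper does not prove this lemma — it simply cites \cite{danchin cpam 2012, danchin aif 2014} — so there is no in-text proof to compare against. Your reconstruction is correct and is the standard Banach-algebra argument from those references: $DX_{\lagr{v}}-I=\int_0^tD\lagr{v}\,d\tau$ is controlled in $\dot{B}_{p,1}^{n/p}$ via Minkowski's inequality, $A_{\lagr{v}}-I$ is handled by the Neumann series (valid once $c_0$ times the algebra constant is below $1/2$), $\mathscr{A}_{\lagr{v}}-I$ is a polynomial in $Z$ without constant term, and the Lipschitz bounds follow from the resolvent identity and telescoping. You also correctly flag the only genuine subtlety, namely that $I\notin\dot{B}_{p,1}^{n/p}$, and you systematically work with the differences so that every product after expansion is an honest product in the algebra; together with the completeness of $\dot{B}_{p,1}^{n/p}$ for $1\le p<\infty$ (needed for the Neumann series to sum in the space), this closes the argument.
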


Now, in view of \eqref{flow estimate for A minus I} and product laws in Besov spaces, we have
\begin{align}\label{nonlinear estimate}
\|f(\lagr{v})\|_{L^{1}(\dot{B}_{p,1}^{n/p-1})}\lesssim\|\nabla\lagr{v}\|_{L^{1}(\dot{B}_{p,1}^{n/p})}^2
\end{align}
whenever $\lagr{v}$ satisfies \eqref{smallness on besov norm of v}.

Again, in view of Theorem \ref{concrete maximal regularity theorem}, we shall perform the contraction mapping theorem in the Banach space $E_p$ defined as
\begin{align*}
E_p\vcentcolon=\left\{\lagr{u}\in C_b([0,\infty);\Dot{B}_{p,1}^{n/p-1})|\partial_t\lagr{u}\in L^1(\R_+;\Dot{B}_{p,1}^{n/p-1}),\lagr{u}\in L^1(\R_+;\Dot{B}_{p,1}^{n/p+1})\right\}
\end{align*}
endowed with the norm
\begin{align*}
\|\lagr{u}\|_{E_p}\vcentcolon=\|\lagr{u}\|_{L^\infty(\Dot{B}_{p,1}^{n/p-1})}+\|\partial_t\lagr{u},\mathcal{L}\lagr{u}\|_{L^1(\Dot{B}_{p,1}^{n/p-1})}.
\end{align*}

Now we can prove the global-in-time well-posedness for \eqref{Lagrangian formulation of compressible pressureless flow}.
\begin{theorem}\label{global well posedness of compressible pressureless flow in Lagrangian coordinates}
Assuming Assumption \ref{initial data assumption}, there exists a positive constant $c$ depending on $m,p,n,\mu,\nu,\|\rho_0\|_{\mathscr{M}(\Dot{B}_{p,1}^{n/p-1})}$ and $\|\rho_0^{-1}\|_{\mathscr{M}(\Dot{B}_{p,1}^{n/p-1})}$ such that if $\|u_0\|_{\Dot{B}_{p,1}^{n/p-1}}\le c$, then \eqref{Lagrangian formulation of compressible pressureless flow} has a unique global-in-time strong solution $\lagr{u}\in E_p$ satisfying $\|\lagr{u}\|_{E_p}\lesssim\|u_0\|_{\Dot{B}_{p,1}^{n/p-1}}$.
\end{theorem}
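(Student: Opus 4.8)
The plan is to recast \eqref{Lagrangian formulation of compressible pressureless flow} as a fixed point problem $\lagr{u}=\Phi(\lagr{u})$ in the Banach space $E_p$. For given $\lagr{v}$ (taken below in a small ball of $E_p$, so that $X_{\lagr{v}}$ is a genuine diffeomorphism and $\mathscr{A}_{\lagr{v}},A_{\lagr{v}}$ make sense), let $\Phi(\lagr{v})$ be the unique strong solution furnished by Theorem \ref{concrete maximal regularity theorem} to the \emph{linear} system
\begin{align*}
\rho_0\partial_t\lagr{u}-\mathcal{L}\lagr{u}=f(\lagr{v}),\qquad \lagr{u}|_{t=0}=u_0 .
\end{align*}
Set $s\vcentcolon=|n/p-1|$. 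Since $n\in\{2,3\}$ and $p\in(1,2n)\setminus\{n\}$, one checks that $s\in(0,2)$: when $p<n$ one has $0<n/p-1\le n/p$ and applies Theorem \ref{concrete maximal regularity theorem} {\rm (\romannumeral1)} with exponent $s$, while when $p>n$ one has $n/p-1=-s<0$ and applies Theorem \ref{concrete maximal regularity theorem} {\rm (\romannumeral2)}; in both cases the multiplier hypothesis required is exactly $\rho_0,\rho_0^{-1}\in\mathscr{M}(\dot{B}_{p,1}^{n/p-1})$ from Assumption \ref{initial data assumption}. Using $\|\mathcal{L}\lagr{u}\|_{\dot{B}_{p,1}^{n/p-1}}\simeq\|\lagr{u}\|_{\dot{B}_{p,1}^{n/p+1}}$ to reconcile the two equivalent descriptions of the $E_p$-norm, Theorem \ref{concrete maximal regularity theorem} yields a constant $C_0$ depending on $m,p,n,\mu,\nu,\|\rho_0\|_{\mathscr{M}}$ and $\|\rho_0^{-1}\|_{\mathscr{M}}$ with
\begin{align*}
\|\Phi(\lagr{v})\|_{E_p}\le C_0\|u_0\|_{\dot{B}_{p,1}^{n/p-1}}+C_0\|f(\lagr{v})\|_{L^1(\dot{B}_{p,1}^{n/p-1})}.
\end{align*}

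Next I would run the standard self-map step. Put $R\vcentcolon=2C_0\|u_0\|_{\dot{B}_{p,1}^{n/p-1}}$ and consider $\overline{B}\vcentcolon=\{\lagr{v}\in E_p:\|\lagr{v}\|_{E_p}\le R\}$. Since $\|\nabla\lagr{v}\|_{L^1(\dot{B}_{p,1}^{n/p})}\lesssim\|\lagr{v}\|_{L^1(\dot{B}_{p,1}^{n/p+1})}\le\|\lagr{v}\|_{E_p}$, for $R$ small every $\lagr{v}\in\overline{B}$ satisfies the smallness condition \eqref{smallness on besov norm of v}; in particular, through $\dot{B}_{p,1}^{n/p}\hookrightarrow L^\infty$ we get $\nabla\lagr{v}\in L^1(\R_+;L^\infty)$, so $X_{\lagr{v}}$ in \eqref{define trajectory via Lagrangian velocity} is indeed a $C^1$-diffeomorphism and the nonlinear estimate \eqref{nonlinear estimate} applies. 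Combining \eqref{nonlinear estimate} with the previous display gives $\|\Phi(\lagr{v})\|_{E_p}\le C_0\|u_0\|_{\dot{B}_{p,1}^{n/p-1}}+C_1R^2$, which is $\le R$ once $C_1R\le\tfrac12$; together with the requirement that $R$ be small enough to enforce \eqref{smallness on besov norm of v}, this holds provided $\|u_0\|_{\dot{B}_{p,1}^{n/p-1}}\le c$ for a suitable $c$. Hence $\Phi(\overline{B})\subset\overline{B}$.

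For the contraction, given $\lagr{v}_1,\lagr{v}_2\in\overline{B}$ and $\delta\lagr{v}=\lagr{v}_1-\lagr{v}_2$, the difference $\Phi(\lagr{v}_1)-\Phi(\lagr{v}_2)$ solves the same linear system with right-hand side $f(\lagr{v}_1)-f(\lagr{v}_2)$ and zero initial datum, so Theorem \ref{concrete maximal regularity theorem} gives $\|\Phi(\lagr{v}_1)-\Phi(\lagr{v}_2)\|_{E_p}\le C_0\|f(\lagr{v}_1)-f(\lagr{v}_2)\|_{L^1(\dot{B}_{p,1}^{n/p-1})}$. Expanding each term of $f$ as a telescoping difference and using the product laws in $\dot{B}_{p,1}^{n/p}$ and $\dot{B}_{p,1}^{n/p-1}$ together with \eqref{flow estimate for A minus I}--\eqref{flow estimate for difference of A}, one bounds the right-hand side by $C_2R\,\|\delta\lagr{v}\|_{E_p}$; shrinking $c$ so that $C_2R\le\tfrac12$ makes $\Phi$ a contraction on $\overline{B}$. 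Banach's fixed point theorem then produces a unique $\lagr{u}\in\overline{B}$ with $\lagr{u}=\Phi(\lagr{u})$, which is the desired global-in-time strong solution of \eqref{Lagrangian formulation of compressible pressureless flow}, and $\|\lagr{u}\|_{E_p}\le\|\Phi(\lagr{u})\|_{E_p}\le 2C_0\|u_0\|_{\dot{B}_{p,1}^{n/p-1}}$ gives $\|\lagr{u}\|_{E_p}\lesssim\|u_0\|_{\dot{B}_{p,1}^{n/p-1}}$; uniqueness in the full class $E_p$ then follows since any $E_p$-solution with norm below the threshold already lies in $\overline{B}$.

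The main obstacle I anticipate is bookkeeping rather than a new estimate: one must verify that $s=|n/p-1|$ always lands in the range where Theorem \ref{concrete maximal regularity theorem} and the completeness of $\dot{B}_{p,1}^{\pm s}$ are simultaneously available for \emph{every} admissible pair $(n,p)$ -- this is where $n\le 3$ and $p\in(1,2n)\setminus\{n\}$ enter, the latter also being exactly the range in which the product law $\|uv\|_{\dot{B}_{p,1}^{n/p-1}}\lesssim\|u\|_{\dot{B}_{p,1}^{n/p}}\|v\|_{\dot{B}_{p,1}^{n/p-1}}$ holds -- and that the two equivalent norms on $E_p$ are kept consistent when passing between the output of Theorem \ref{concrete maximal regularity theorem} and the nonlinear estimates. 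The analytic core, namely the maximal $L^1$ regularity for \eqref{concrete Cauchy problem} and the flow estimates, is already in place from the earlier sections.
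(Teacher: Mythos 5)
Your construction is essentially the paper's proof: solve the linearized system $\rho_0\partial_t\lagr{u}-\mathcal{L}\lagr{u}=f(\lagr{v})$ by Theorem \ref{concrete maximal regularity theorem} (with exponent $n/p-1$, positive or negative according to $p<n$ or $p>n$, exactly as the multiplier hypothesis in Assumption \ref{initial data assumption} allows), enforce \eqref{smallness on besov norm of v} on a small ball of $E_p$ so that \eqref{flow estimate for A minus I}--\eqref{flow estimate for difference of A} and the product laws give \eqref{nonlinear estimate} and the difference bound $\lesssim r\|\delta\lagr{v}\|_{E_p}$, and close with Banach's fixed point theorem; the self-map and contraction bookkeeping matches the paper's choice of $r$ and $c$.

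The one place where your argument falls short of the stated theorem is the last sentence on uniqueness. Saying that ``any $E_p$-solution with norm below the threshold already lies in $\overline{B}$'' only gives uniqueness among \emph{small} solutions, whereas the theorem asserts uniqueness in all of $E_p$: a competing solution could a priori have large $E_p$-norm, in which case the flow estimates (which require \eqref{smallness on besov norm of v}) and your contraction bound do not apply to it globally. The standard repair, and what the paper's remark ``the proof of uniqueness is similar to the contraction property'' is pointing at, is a local-in-time argument: for two solutions $\lagr{u}_1,\lagr{u}_2\in E_p$ with the same data, the quantities $\|\nabla\lagr{u}_i\|_{L^1((0,T);\dot{B}_{p,1}^{n/p})}$ tend to $0$ as $T\to0$ by absolute continuity of the time integral, so on a small interval the flow and product estimates apply to both solutions and the maximal regularity estimate for the difference yields $\|\delta\lagr{u}\|_{E_p(T)}\le\tfrac12\|\delta\lagr{u}\|_{E_p(T)}$, hence $\delta\lagr{u}\equiv0$ on $[0,T]$; a continuation (connectedness of the set of coincidence times) then gives uniqueness on $[0,\infty)$. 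With that substitution your proof is complete and coincides with the paper's.
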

\begin{proof}
For $r>0$, let $E_p(r)$ denote the closed ball in $E_p$ centered at $u=0$ with radius $r$. We shall construct a contraction mapping on $E_p(r)$ by solving the linearized system
\begin{eqnarray}\label{linearized Lagrangian formulation of pressureless flow}
\left\{\begin{aligned}
&\rho_0\partial_t \lagr{u}-\mathcal{L}\lagr{u}=f(\lagr{v}),\\
&\lagr{u}|_{t=0}=u_0,
\end{aligned}\right.
\end{eqnarray}
where the input $\lagr{v}\in E_p(r)$. To bound the inhomogeneous term, we require $r$ to be small so that
\begin{align*}
\|\nabla\lagr{v}\|_{L^1(\Dot{B}_{p,1}^{n/p})}\le C\|\mathcal{L}\lagr{v}\|_{L^1(\Dot{B}_{p,1}^{n/p-1})}\le C_1r\le c_0.
\end{align*}
This then implies \eqref{nonlinear estimate}. 

Now, applying Theorem \ref{concrete maximal regularity theorem}, we can solve \eqref{linearized Lagrangian formulation of pressureless flow} for a strong solution $\lagr{u}\in E_p$ satisfying
\begin{align*}
\|\lagr{u}\|_{E_p}\le C\|u_0\|_{\dot{B}_{p,1}^{n/p-1}}+C\|f(\lagr{v})\|_{L^1(\dot{B}_{p,1}^{n/p-1})}\le C_2\|u_0\|_{\dot{B}_{p,1}^{n/p-1}}+C_2r^2.
\end{align*}
To ensure that the mapping $\lagr{v}\mapsto\lagr{u}$ is a self-map on $E_p(r)$, we need 
\begin{align*}
r\le \frac{c_0}{C_1}\wedge\frac{1}{2C_2}
\end{align*}
and
\begin{align*}
\|u_0\|_{\dot{B}_{p,1}^{n/p-1}}\le\frac{r}{2C_2}.
\end{align*}

Next, we need to show the contraction property of the mapping $\lagr{v}\mapsto\lagr{u}$. Given $\lagr{v}_1,\lagr{v}_2\in E_p(r)$, let $\lagr{u}_1,\lagr{u}_2\in E_p(r)$ be the corresponding solutions to \eqref{linearized Lagrangian formulation of pressureless flow}. In what follows, for two quantities $q_1$ and $q_2$, we always denote by $\delta q$ their difference $q_1-q_2$. Then applying Theorem \ref{concrete maximal regularity theorem} to the equation satisfied by $\delta\lagr{u}$, we obtain
\begin{align*}
\|\delta\lagr{u}\|_{E_p}\le C\|f(\lagr{v}_1)-f(\lagr{v}_2)\|_{L^1(\dot{B}_{p,1}^{n/p-1})}.
\end{align*}
We write
\begin{align*}
f(\lagr{v}_1)-f(\lagr{v}_2)=&\mu\divg((\mathscr{A}_{1}A_{1}^{\intercal}-I)\nabla\delta\lagr{v})+\mu\divg((\mathscr{A}_{1}A_{1}^{\intercal}-\mathscr{A}_{2}A_{2}^{\intercal})\nabla\lagr{v}_2)\\
&+(\mu+\lambda)(\mathscr{A}_{1}^{\intercal}-I)\nabla\mathrm{Tr}(A_{1}D\delta\lagr{v})+(\mu+\lambda)(\mathscr{A}_{1}^{\intercal}-I)\nabla\mathrm{Tr}(\delta AD\lagr{v}_2)\\
&+(\mu+\lambda)(\delta\mathscr{A})^{\intercal}\nabla\mathrm{Tr}(A_{2}D\lagr{v}_2)+(\mu+\lambda)\nabla\mathrm{Tr}((A_{1}-I)D\delta\lagr{v})\\
&+(\mu+\lambda)\nabla\mathrm{Tr}(\delta A D\lagr{v}_2),
\end{align*}
where $\mathscr{A}_i=\mathscr{A}_{\lagr{v}_i}$ and $A_i=A_{\lagr{v}_i}$, $i=1,2$. Applying \eqref{flow estimate for A minus I}, \eqref{flow estimate for difference of A} and product laws in Besov spaces, we arrive at
\begin{align*}
\|f(\lagr{v}_1)-f(\lagr{v}_2)\|_{L^1(\dot{B}_{p,1}^{n/p-1})}\le C\|\nabla\lagr{v}_1,\nabla\lagr{v}_2\|_{L^1(\dot{B}_{p,1}^{n/p})}\|\nabla\delta\lagr{v}\|_{L^1(\dot{B}_{p,1}^{n/p})}.
\end{align*}
We thus infer
\begin{align*}
\|\delta\lagr{u}\|_{E_p}\le C\|\nabla\lagr{v}_1,\nabla\lagr{v}_2\|_{L^1(\dot{B}_{p,1}^{n/p})}\|\nabla\delta\lagr{v}\|_{L^1(\dot{B}_{p,1}^{n/p})}\le C_3r\|\delta\lagr{v}\|_{E_p},
\end{align*}
from which we see that $\|\delta\lagr{u}\|_{E_p}\le \frac12\|\delta\lagr{v}\|_{E_p}$ if $r\le\frac{1}{2C_3}$.

Finally, we choose
\begin{align*}
r=\frac{c_0}{C_1}\wedge\frac{1}{2C_2}\wedge\frac{1}{2C_3}\ \ \mathrm{and}\ \ c=\frac{r}{2C_2}.
\end{align*}
Then the mapping $\lagr{v}\mapsto\lagr{u}$ is a contraction on $E_p(r)$, thus, admits a unique fixed point $\lagr{u}\in E_p(r)$, which is a solution to \eqref{Lagrangian formulation of compressible pressureless flow} in $E_p$. The proof of the uniqueness of strong solutions in $E_p$ is similar to the proof of the contraction property of the mapping $\lagr{v}\mapsto\lagr{u}$. This completes the proof of the theorem.
\end{proof}
\begin{remark}
For $n<p<2n$, in view of \eqref{advantage of maximal regularity negative s}, one can prove the global well-posedness under the assumption that $\|\rho_0u_0\|_{\Dot{B}_{p,1}^{n/p-1}}\le c$ with $c$ only depending on $m,p,n,\mu,\nu$.
\end{remark}

Let us conclude this paper by proving Theorem \ref{global well posedness of compressible pressureless flow}.
\begin{proof}[Proof of Theorem \ref{global well posedness of compressible pressureless flow}]
Let $\lagr{u}$ be the global-in-time solution to \eqref{Lagrangian formulation of compressible pressureless flow} constructed in Theorem \ref{global well posedness of compressible pressureless flow in Lagrangian coordinates}. Then \eqref{define trajectory via Lagrangian velocity} defines a $C^1$-diffeomorphism $X_{\lagr{u}}(t,\cdot)$ over $\R^n$ for every $t\ge0$, which enables us to go back to the Eulerian coordinates by introducing
\begin{align*}
\rho(t,x)=\rho_0(X_{\lagr{u}}^{-1}(t,x))\ \ \mathrm{and}\ \ u(t,x)=\lagr{u}(t,X_{\lagr{u}}^{-1}(t,x)).
\end{align*}
Then $(\rho,u)$ is a solution to \eqref{compressible pressureless flow}. 

Let $(\rho_i,u_i)$, $i=1,2$, be two solutions to \eqref{compressible pressureless flow} with the same initial data. Let $X_{u_i}$ be defined via \eqref{define trajectory via Eulerian velocity}, and $(\lagr{\rho}_i,\lagr{u}_i)$ via \eqref{from Eulerian to Lagrangian}. Then $\lagr{u}_1$ and $\lagr{u}_2$ are two solutions to \eqref{Lagrangian formulation of compressible pressureless flow} with the same initial data. So it follows from the uniqueness part of Theorem \ref{global well posedness of compressible pressureless flow in Lagrangian coordinates} that $(\rho_1,u_1)=(\rho_2,u_2)$.

We refer the reader to \cite{danchin cpam 2012,danchin aif 2014} for more details.
\end{proof}

\bigskip


\bigskip
\bigskip

\end{document}